\newcounter{lemma}
\newtheorem{Theorem}{Theorem}
\newtheorem{Lemma}[lemma]{Lemma}
\newtheorem{Corollary}[lemma]{Corollary}
\newtheorem{Proposition}[lemma]{Proposition}
\theoremstyle{definition}
\newtheorem{Example}[lemma]{Example}
\newtheorem{Definition}[lemma]{Definition}
\newtheorem{Notation}[lemma]{Notation}
\newtheorem{Remark}[lemma]{Remark}
\def\H{\mathbb H}
\def\GL{\mathrm{GL}}
\def\Gal{\mathrm{Gal}}
\def\End{\mathrm{End}}
\def\tr{\operatorname{tr}}
\def\nm{\operatorname{nr}}
\def\sA{\mathscr A}
\def\B{\mathcal B}
\def\C{\mathbb C}
\def\I{\mathcal I}
\def\O{\mathcal O}
\def\Q{\mathbb Q}
\def\R{\mathbb R}
\def\Z{\mathbb Z}
\def\fX{\mathfrak X}
\def\fY{\mathfrak Y}
\def\sA{\mathscr A}
\def\sD{\mathscr D}
\def\sQ{\mathscr Q}
\def\QQ{\mathcal Q}
\def\new{\mathrm{new}}
\def\mod{\  \mathrm{mod}\ }
\def\Im{\mathrm{Im\,}}
\def\gen#1{\langle #1\rangle}
\def\JS#1#2{\left(\frac{#1}{#2}\right)}
\def\SL{\mathrm{SL}}
\def\Sp{\mathrm{Sp}}
\def\CM{\mathrm{CM}}
\def\wt{\widetilde}
\def\M#1#2#3#4{\begin{pmatrix}#1&#2\\#3&#4\end{pmatrix}}
\def\SM#1#2#3#4{\left(\begin{smallmatrix}#1&#2\\#3&#4\end{smallmatrix}
  \right)}
\def\div{\mathrm{div}\,}
\def\->{\rightarrow}
\def\<->{\leftrightarrow}
\def\ol{\overline}
\def\~{\widetilde}
\newcommand{\tabcaption}{\def\@captype{table}\caption}
\begin{document}

\title{Quaternionic loci in Siegel's modular threefold}

\author{Yi-Hsuan Lin}
\address{Department of Applied Mathematics, National Chiao Tung
  University, Hsinchu, Taiwan 300}
\email{philiplin1212.am98g@nctu.edu.tw}

\author{Yifan Yang}
\address{Department of Mathematics, National Taiwan
  University and National Center for Theoretical Sciences, Taipei,
  Taiwan 10617}  
\email{yangyifan@ntu.edu.tw}
\date{\today}
\subjclass[2000]{primary 11G15, secondary 11F03, 11F46, 11G10}
\thanks{The authors were partially supported by Grant
  102-2115-M-009-001-MY4 of the Ministry of Science and Technology, Taiwan (R.O.C.).}

\begin{abstract} Let $\QQ_D$ be the set of moduli points on Siegel's
  modular threefold whose corresponding principally polarized abelian
  surfaces have quaternionic multiplication by a maximal order $\O$ in
  an indefinite quaternion algebra of discriminant $D$ over $\Q$ such
  that the Rosati involution coincides with a positive involution of
  the form $\alpha\mapsto\mu^{-1}\ol\alpha\mu$ on $\O$ for some
  $\mu\in\O$ with $\mu^2+D=0$. In this paper, we first give a formula
  for the number of irreducible components in $\QQ_D$, strengthening
  an earlier result of Rotger. Then for each irreducible component of
  genus $0$, we determine its rational parameterization in terms of a
  Hauptmodul of the associated Shimura curve.
\end{abstract}
\maketitle

\section{Introduction}
Let $A$ be a simple abelian surface over $\C$. According to a
well-known classification of Albert (see \cite[Section 21]{Mumford}),
the endomorphism algebra of $A$ is either $\Q$, a real quadratic
field, an indefinite quaternion algebra over $\Q$, or a CM field of
degree $4$ over $\Q$. In this paper, we are concerned with the third
case, i.e., abelian surfaces with quaternionic multiplication.

Let $\O$ be a maximal order in an indefinite quaternion algebra $\B_D$
of discriminant $D$ over $\Q$. We let $\QQ_D$ be the set of all
isomorphism classes of principally polarized abelian surfaces
$(A,\rho)$ with $\O\subset\End(A)$ such that the Rosati involution
coincides with a positive involution of the form
$\alpha\mapsto\mu^{-1}\ol\alpha\mu$ on $\O$ for some $\mu\in\O$ with
$\mu^2+D=0$, where $\ol\alpha$ denotes the quaternion conjugate of
$\alpha$. (Any abelian surface $A$ with $\O\subset\End(A)$ has such a
principal polarization.) We regard $\QQ_D$ as a subset of Siegel's
modular threefold $\sA_2:=\Sp(4,\Z)\backslash\H_2$ and call it the
\emph{quaternionic locus} of discriminant $D$. Here $\H_2$ denotes the
Siegel upper half-space of degree $2$.

In \cite{Rotger-Igusa}, it is shown that $\QQ_D$ consists of a finite
number of irreducible components, each of which is the image of the
Shimura curve $X_0^D(1)$ under some natural map
$\psi:X_0^D(1)\to\sA_2$. Such a map $\psi$ factors through a subgroup
$W$ of the Atkin-Lehner group $W_D$ and the resulting map from the
quotient curve $X_0^D(1)/W$ to $\sA_2$ is generically injective,
although it happens quite often that the image has
self-intersection. For convenience, we will also call an irreducible
component in $\QQ_D$ a \emph{Shimura curve}. To avoid confusion, we
shall use the Fraktur font, such as $\fX$, to denote a Shimura curve
on $\sA_2$, while the regular Roman font, such as $X$, will be used to
denote a Shimura curve in the usual sense. Also, the term
\emph{CM-point} could mean a CM-point on a Shimura curve $X$ in the
usual sense or the corresponding point on $\fX$, depending on the
context.

\begin{Remark} Note that an abelian surface $A$ with quaternionic
  multiplication by $\O$ may have a principal polarization $\rho$ such
  that $(A,\rho)\notin\QQ_D$, i.e., the restriction of the Rosati
  involution to $\O$ does not coincide with any
  positive involution of the form $\alpha\mapsto\mu^{-1}\ol\alpha\mu$
  for some $\mu\in\O$ with $\mu^2+D=0$. When this happens, $\End(A)$
  is strictly bigger than $\O$. In such a case, the abelian 
  surface is isogenous to the product of two elliptic curves with
  complex multiplication by the same imaginary quadratic field. In
  Example \ref{example: QM not on QD}, we will give such an example.
\end{Remark}

Let $r_D$ be the number of Shimura curves in the quaternion
locus $\QQ_D$. Rotger \cite{Rotger2002,Rotger-Crelle} showed that
\begin{equation} \label{equation: Rotger rD}
\begin{cases}
r_D=\displaystyle\frac{1}{2^k} \~{h}(D), & \text{ if }\B_D\text{ does
  not admit a twisting},\\
\displaystyle\frac{1}{2^k} \~{h}(D)<r_D\leq \frac{1}{2^{k-1}} \~{h}(D), & \text{ otherwise,}\\
\end{cases}
\end{equation}
where, using the terminology in \cite{Rotger2002,Rotger-Crelle}, we say
$\B_D$ admits a twisting if there is a positive divisor
$m$ of $D$ such that
$$
  \left(\frac{-D,m}\Q\right)\simeq \B_D
$$
and $\~h(D)$ is defined by
$$
\~{h}(D)=\begin{cases}
h(-4D)+h(-D), & \text{ if } D\equiv 3\mod 4,\\
h(-4D), & \text{ otherwise}\\
\end{cases}
$$
with $h(d)$ being the class number of the quadratic order of
discriminant $d$. Our first main result in this paper is a refinement
of the result of Rotger.

\begin{Theorem} 
\label{theorem: main 1}
Let $k$ be the number of prime divisors of $D$. Then
$$
r_D=\frac{1}{2^k}\~{h}(D)+\frac{1}c\#\left\{m|D:\left(\frac{-D,m}{\Q}\right)\simeq \B_D\right\},
$$
where 
$$
c=\begin{cases}
4, & \text{ if } D \text{ is even},\\
2, & \text{ if } D \text{ is odd}.
\end{cases}
$$
\end{Theorem}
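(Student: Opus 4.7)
The plan is to recast the count of irreducible components as a Burnside orbit count. Building on Rotger's framework \cite{Rotger2002,Rotger-Crelle}, the irreducible components of $\QQ_D$ correspond to orbits of the Atkin-Lehner group $W_D\cong (\Z/2\Z)^k$ acting on a finite set $\mathcal{S}$ whose cardinality is precisely $\~{h}(D)$. The set $\mathcal{S}$ can be described as equivalence classes of pairs consisting of an embedding of $\O$ into the endomorphism ring of an abelian surface together with a distinguished element $\mu\in\O$ satisfying $\mu^2+D=0$ that realizes the principal polarization, modulo isomorphism and the sign ambiguity $\mu\mapsto-\mu$. The case split in the definition of $\~{h}(D)$ reflects whether $\mu$ generates the maximal order of $\Q(\sqrt{-D})$ (possible only when $D\equiv 3\pmod 4$, contributing $h(-D)$) or only the suborder $\Z[\sqrt{-D}]$ of discriminant $-4D$ (contributing $h(-4D)$).

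By Burnside's lemma,
$$
r_D=\frac{1}{2^k}\sum_{w\in W_D}|\operatorname{Fix}(w|\mathcal{S})|.
$$
The identity contributes $\~{h}(D)$, yielding the first term in the desired formula. The next step is to identify which non-identity elements $w_m\in W_D$ (indexed by the positive divisors $m\mid D$) have fixed points on $\mathcal{S}$. The Atkin-Lehner element $w_m$ is implemented by conjugation by any $\chi\in\O$ of reduced norm $m$, so $w_m$ fixes a class corresponding to $\mu$ precisely when one can choose $\chi$ with $\chi\mu=-\mu\chi$; by the standard Hilbert-symbol computation this is equivalent to $(-D,m/\Q)\simeq \B_D$, i.e., $m$ is a twisting in the sense of the theorem. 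This explains why the index set in the second term of the theorem is exactly the set of twistings.

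For each twisting $m$, I would then show that the number of fixed classes of $w_m$ on $\mathcal{S}$ equals $2^k/c$ independently of $m$. The universal factor of $2$ in $c$ reflects the $\pm\mu$ symmetry built into $\mathcal{S}$, while the extra factor of $2$ appearing when $D$ is even comes from an additional integrality constraint on the anticommuting element $\chi$ at the ramified prime $2$ of $\B_D$. Summing the Burnside formula,
$$
r_D=\frac{1}{2^k}\Bigl(\~{h}(D)+\frac{2^k}{c}\cdot T\Bigr)=\frac{\~{h}(D)}{2^k}+\frac{T}{c},
$$
where $T=\#\{m\mid D:(-D,m/\Q)\simeq \B_D\}$, which is the claimed formula. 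As a consistency check, when $T=0$ one recovers Rotger's lower bound in \eqref{equation: Rotger rD}, and since at most one twisting can fix a given class (two anticommuting elements $\chi_1,\chi_2$ would force $\chi_1\chi_2$ to centralize $\mu$), the upper bound $\~{h}(D)/2^{k-1}$ is also recovered in the extremal case.

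The main obstacle is the explicit determination of $|\operatorname{Fix}(w_m)|$. This reduces to counting optimal embeddings of the relevant orders of discriminant $-4D$ (and $-D$, when $D\equiv 3\pmod 4$) into $\O$ that admit a compatible anticommuting norm-$m$ element, modulo conjugation by $\O^\times$ and $\mu\mapsto-\mu$. The delicate point is to verify that Eichler's class-number formula gives a count independent of $m$ and to isolate the parity-dependent local contribution at $2$ that accounts for $c=4$ versus $c=2$; tracking this $2$-adic correction through the embedding count is where the subtlety of the refinement over Rotger's estimate lies.
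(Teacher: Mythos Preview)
Your Burnside strategy is sound in principle and is genuinely different from the paper's route, but the proposal has two issues, one minor and one major.

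The minor one is bookkeeping. With $\mathcal S=\widetilde{\mathscr D}(\mathcal O)$ as in Rotger, one has $|\mathcal S|=\widetilde h(D)/2$, not $\widetilde h(D)$; your description of $\mathcal S$ (``modulo isomorphism and the sign ambiguity $\mu\mapsto-\mu$'') lands on this smaller set. This is harmless in the end because $w_D\in W_\mu$ for \emph{every} $\mu$, so $w_D$ fixes all of $\mathcal S$, and in Burnside over the full group $W_D$ the pair $\{1,w_D\}$ together contributes $2\cdot\widetilde h(D)/2=\widetilde h(D)$. You need to make this explicit rather than claim that the identity alone contributes $\widetilde h(D)$.

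The major one is the key claim $|\operatorname{Fix}(w_m)|=2^k/c$ for each twisting $m$. This is the entire content of the refinement over Rotger, and your heuristic (``$\pm\mu$ symmetry'' plus ``an additional integrality constraint on $\chi$ at the ramified prime $2$'') does not prove it; in fact it misidentifies the mechanism. The correct unpacking is that for each twisting $m$ there are exactly one (when $2\mid D$) or two (when $2\nmid D$) $W_D$-orbits of twisting classes with that twisting divisor, and each such orbit has size $2^{k}/|W_\mu|=2^{k-2}$; multiplying gives $2^k/c$. The hard part is the orbit count (one versus two), and your embedding-count sketch does not supply it.

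The paper sidesteps the fixed-point computation entirely by translating to binary quadratic forms: it sets up a bijection between $W_D$-orbits on $\widetilde{\mathscr D}(\mathcal O)$ and $\mathrm{GL}(2,\mathbb Z)$-classes in a set $\mathscr Q_D$ of forms of discriminant $-16D$. Genus theory gives $|\mathscr Q_D/\mathrm{SL}(2,\mathbb Z)|=\widetilde h(D)/2^{k-1}$, and passing from $\mathrm{SL}$ to $\mathrm{GL}$ amounts to counting \emph{ambiguous} classes. A short proposition identifies ambiguous classes with twisting $\mu$, and the final count is a direct case analysis: for each twisting $m$ one lists the candidate reduced ambiguous forms $mx^2+(4D/m)y^2$, $4mx^2+(D/m)y^2$, $4mx^2+4mxy+(m+D/m)y^2$ and checks which satisfy the defining congruences of $\mathscr Q_D$. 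The even/odd split in $c$ falls out of this parity check on the coefficients, not from a local obstruction on the anticommuting element $\chi$.

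So your approach could be completed, but only after establishing $|\operatorname{Fix}(w_m)|=2^k/c$; and the cleanest way to do that is essentially the paper's quadratic-form count. Without that translation, or an equivalent embedding-number computation that you acknowledge but have not carried out, the Burnside sum cannot be closed.
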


\begin{Example} If $D=2p$, $p$ an odd prime, then the discriminant of
  the quaternion algebra $\JS{-D,2}\Q\simeq\JS{-D,p}\Q$ is either $1$
  or $D$, depending on whether $p$ is congruent to $\pm1$ or $\pm 3$
  modulo $8$. Therefore, from Theorem \ref{theorem: main 1}, we obtain
  $$
    r_D=\frac14h(-8p)+\begin{cases}
    0, &\text{if }p\equiv\pm 1\mod 8, \\
    1/2, &\text{if }p\equiv\pm 3\mod 8. \end{cases}.
  $$
  Similarly, if $D=pq$ is a product of two odd primes, then
  $$
    r_D=\frac14\~h(D)+\begin{cases}
    0, &\text{if }p,q\equiv 3\mod 4, \\
    \displaystyle\frac12\left(1-\JS qp\right),
    &\text{else}. \end{cases}
  $$
  Using these formulas, We find the following values of $r_D$ for the
  first few $D$.
  $$ \extrarowheight3pt
  \begin{array}{c|cccccccccccc} \hline\hline
  D & 6 & 10 & 14 & 15 & 21 & 22 & 26 & 33 & 34 & 35 & 39 & 46 \\ \hline
 r_D& 1 & 1  &  1 & 2  & 1  &  1 & 2  & 1  & 1  & 3  & 2 & 1 \\ 
  \hline\hline
  \end{array}
  $$
\end{Example}

In fact, Theorem \ref{theorem: main 1} is a corollary to Theorem
\ref{theorem: correspondences} below. In Theorem \ref{theorem:
  correspondences}, we will see that to each Shimura curve
in $\QQ_D$, we may associate a positive definite integral binary
quadratic form $ax^2+bxy+cy^2$ with properties
\begin{enumerate}
\item $b^2-4ac=-16D$,
\item $a\equiv 0,1\mod 4$, and
\item $\JS{-D,a}\Q\simeq\B_D$,
\end{enumerate}
and the number $r_D$ is precisely equal to the number
of $\mathrm{GL}(2,\Z)$-equivalence classes of such quadratic forms.
In other words, the problem of counting the number of Shimura curves in
$\QQ_D$ is reduced to that of counting the number of equivalence
classes of certain quadratic forms with prescribed properties, which
we can solve using theory of quadratic forms.
These quadratic forms are initially defined in terms of singular
relations satisfied by Shimura curves and were studied in
\cite{Hashimoto,Runge}. They have the property that a
Shimura curve $\fX$ lies on the Humbert surface $H_n$ of discriminant
$n$ if and only if its associated quadratic form primitively
represents $n$.

Having determined $r_D$, our next goal is to find a rational
parameterization for each irreducible component of genus zero in terms
of a Hauptmodul of its associated Shimura curve. To
state the problem more concretely, we recall that the Igusa invariants
$J_m$, $m=2,4,6,10$, of a curve of genus $2$ over $\C$ are equal to
the values of certain (meromorphic) Siegel modular forms of weight $m$
at the moduli point corresponding to its Jacobian with the canonical
principal polarization (see
\cite{Igusa-1962}). Now let $\fX$ be an irreducible component in
$\QQ_D$ and $X=X_0^D(1)/W$, $W<W_D$, be the Shimura curve associated
to it. It can be shown that the restriction of a Siegel modular form
of weight $m$ along an irreducible component $\fX$ is a
modular form of weight $2m$ on the associated Shimura curve. In
particular, if $\fX$ is of genus $0$, then $(J_m^{k/2}/J_k^{m/2})|_\fX$
is equal to a rational function in a Hauptmodul of the Shimura
curve. Our goal here is to find these rational functions. This will
provide a simple method to determine whether a curve of genus $2$ has
quaternionic multiplication by a maximal order in the quaternion
algebra. As far as we know, this has been done only for the cases
$D=6$ and $D=10$.

In \cite{Baba-Granath}, Baba and Granath showed that if $j$ is the
Hauptmodul of $X_0^6(1)/W_6$ such that it takes values $0$, $\infty$
and $-16/27$ at the CM-points of discriminants $-4$, $-3$, and $-24$,
respectively, then an isomorphism from $X_0^6(1)/W_6$ to the
quaternionic locus $\QQ_6$ in terms of Igusa's coordinates
$[J_2,J_4,J_6,J_{10}]$ is given by
\begin{equation} \label{equation: Baba}
  \tau\longmapsto [12(j(\tau)+1),6(j(\tau)^2+j(\tau)+1),
  4(j(\tau)^3-2j(\tau)^2+1),j(\tau)^3],
\end{equation}
in the weighted projective space $\mathbb{WP}[1,2,3,5]$.
In particular, the Jacobian of an algebraic curve of genus $2$ over
$\C$ lies in $\B_6$ if and only if its Igusa
invariants $[J_2,J_4,J_6,J_{10}]$ are equal to
$[12(j+1),6(j^2+j+1),4(j^3-2j^2+1),j^3]$ in $\mathbb{WP}[1,2,3,5]$ for
some $j\neq 0,\infty$. (Note that points on $J_{10}=0$ correspond to
abelian surfaces that are products of elliptic curves with product
principal polarization. Such abelian surfaces cannot be Jacobians of
curves of genus $2$ with the canonical principal polarization.) Baba
and Granath \cite{Baba-Granath} also had an 
analogous result for the case $D=10$. Their method relied on
an earlier work of Hashimoto and Murabayashi
\cite{Hashimoto-Murabayashi}, in which explicit families of curves of
genus $2$ with quaternionic multiplication by maximal orders in $\B_6$
and $\B_{10}$ were obtained using the fact that the Shimura curves of
discriminants $6$ and $10$ lie on the intersection of Humbert surfaces
of discriminants $5$ and $8$. In this paper, we adopt a very different
approach, which will be explained in Section \ref{section: method}.

\begin{Theorem}
  The tables in Appendix \ref{appendix: parameterizations}
  give a complete list of rational parameterizations of irreducible
  components of genus zero in quaternionic loci in terms of
  Hauptmoduls of their associated Shimura curves.
\end{Theorem}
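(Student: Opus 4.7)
The plan is to treat the theorem as a case-by-case verification: for each discriminant $D$ and each genus-zero irreducible component $\fX\subset\QQ_D$, I will construct a rational parameterization explicitly and then check it is correct. I begin by combining Theorem \ref{theorem: main 1} with the classical list of pairs $(D,W)$, $W\subset W_D$, for which $X_0^D(1)/W$ has genus zero, to compile the finite list of components $\fX$ that have to be treated. For each such component, I fix a Hauptmodul $t$ of the associated Shimura curve $X=X_0^D(1)/W$, normalized by prescribing its values at a few CM-points of small discriminant.

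The key reduction is that, since the restriction of a Siegel modular form of weight $m$ to $\fX$ is a modular form of weight $2m$ on $X$, the weight-zero ratios $J_m^{k/2}/J_k^{m/2}$ descend to rational functions in $t$. Consequently I look for polynomials $A_2,A_4,A_6,A_{10}$ of suitably bounded degrees so that the map from $X$ to $\fX\subset\sA_2$ takes the form
$$
t\longmapsto[A_2(t):A_4(t):A_6(t):A_{10}(t)]\in\mathbb{WP}[1,2,3,5]
$$
in Igusa coordinates. The admissible degrees are bounded a priori by the ramification type of the map $\psi$ together with the weights.

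To pin down these polynomials, the crucial input will be Theorem \ref{theorem: correspondences}: the associated quadratic form $ax^2+bxy+cy^2$ primitively represents $n$ if and only if $\fX$ is contained in the Humbert surface $H_n$. Whenever $n$ is small enough that the equation of $H_n$ in Igusa coordinates is explicitly known, substituting $J_m\mapsto A_m(t)$ into that equation produces a polynomial identity in $t$ that strongly constrains the unknowns. I will supplement these constraints with the location and orders of zeros of $A_{10}$, which correspond to CM-points of $X$ whose images in $\sA_2$ represent split products of elliptic curves; the ramification behavior of $X\to\fX$ at the elliptic CM-points of $X$; and direct computation of a few Igusa invariants at CM-points of small discriminant, carried out via the explicit isogeny of the underlying abelian surface to a product $E\times E$ for a CM elliptic curve $E$. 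Together these cut the ambiguity down to a zero-dimensional system that can be solved by hand or with a computer algebra system.

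The main obstacle will be that the explicit equation of $H_n$ in Igusa coordinates grows very rapidly with $n$, and for certain components $\fX$ the smallest integers represented by the associated quadratic form are not small. The way around this is to exploit the fact that $\fX$ lies on the intersection of several Humbert surfaces, one for each genus of integers represented by the form: two suitably chosen Humbert relations generically cut out $\fX$ as a curve in $\sA_2$, so that combining two equations in the single variable $t$ yields enough constraints to pin the parameterization down uniquely. Once candidates are obtained, verification is straightforward: confirm that the proposed map has the correct degree onto $\fX$, and check the Igusa invariants against a handful of independent test CM-points that were not used in the construction.
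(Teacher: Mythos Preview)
Your approach is genuinely different from the paper's, and while it is in the spirit of Hashimoto--Murabayashi's treatment of $D=6,10$ (using that $\fX$ lies in $H_5\cap H_8$), it does not scale to the full table and contains a real gap.

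The paper does \emph{not} parameterize $\fX$ by substituting into equations of Humbert surfaces. Instead it intersects $\fX$ with other \emph{curves} in $\sA_2$ whose parameterizations are already known: the modular curves $\fY_N$, $\fY_N'$ (easy via $q$-expansions) and previously handled Shimura curves $\fX'$. Each intersection point is a CM-point; once one knows both its Igusa invariants (read off from the other curve's parameterization) and its value of the Hauptmodul $j$ on $X$, one gets a \emph{linear} equation in the unknown coefficients of the rational function. Enough such points, plus the observation that $\operatorname{div}\chi_{10}=2H_1$ pins down the numerator of $s_5|_\fX$, determine the parameterization. The paper in fact remarks that using Humbert-surface equations via Elkies--Kumar is an interesting alternative it did not pursue.

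The gap in your plan is the evaluation of the Hauptmodul. You normalize $t$ by three CM-values, but beyond those three you give no mechanism to determine $t$ at a CM-point; computing Igusa invariants of a CM abelian surface (via an isogeny to $E\times E$) does not tell you where that point sits on $X$. This is exactly the hard step, and the paper devotes two sections to it: ternary quadratic forms to identify which CM-points lie on which intersections (Section~\ref{section: singular relations}), and then Borcherds forms with Schofer's formula together with Nelson's explicit Shimizu lifting to actually compute $t$ at those CM-points (Section~\ref{section: evaluations}). The Shimizu lifting is essential even to distinguish the two CM-points of the same discriminant on $X_0^D(1)/w_D$, a distinction your proposal cannot make. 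A secondary but real obstacle is that substituting unknown polynomials $A_m(t)$ (of degree up to $\sim 40$ for the larger $D$ in the table) into a Humbert-surface equation produces a large nonlinear system in the coefficients, whereas the paper's interpolation through CM-points is linear; your ``two Humbert surfaces'' workaround does not address either the availability of those equations for the relevant $n$ or the nonlinearity.
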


In principle, we can use our method to determine modular
parameterizations for many other curves of positive genus, but since
there are too many of them, we decide to restrict our attention to the
case of genus zero. Our method is straightforward, although some
computation is quite involved.

Note that recently, Elkies and Kumar
\cite{Elkies-Kumar} computed equations of many Hilbert modular
surfaces and rational parameterizations of Humbert surfaces. As
Shimura curves lie on the intersections of Humbert sufaces, it will be
an interesting problem to see whether one can use their results to
study quaternionic loci.

The rest of the paper is organized as follows. In Section
\ref{section: QM}, we review properties of principal polarizations of
abelian surfaces with quaternionic multiplication. A majority of the
material is taken from
\cite{Rotger2002,Rotger-Crelle,Rotger-TAMS,Rotger-Igusa}. In Section
\ref{section: quadratic forms}, we introduce certain quadratic forms
coming from singular relations satisfied by Shimura curves and study
their properties, setting the stage for a proof of Theorem
\ref{theorem: main 1}, given in Section \ref{section: proof of Theorem
  1}. In Section \ref{section: method}, we describe our
method for determining modular parameterizations of Shimura curves in
$\sA_2$ and give some examples. The main ingredients, ternary
quadratic forms associated to CM-points and the method of Borcherds
forms and the method of explicit Shimizu lifting for evaluating
modular functions, are discussed in Sections \ref{section: singular
  relations} and \ref{section: evaluations}, respectively. In
addition, as an auxiliary tool, we introduce modular curves on $\sA_2$
in Section \ref{section: modular curves}.

The computational results are given in appendices. In Appendix
\ref{appendix: list of Shimura curves}, we list all Shimura curves
with discriminant $D<100$ and all Shimura curves of genus $0$ on
$\sA_2$, characterized by their associated binary quadratic forms.
In Appendix \ref{appendix: parameterizations}, we list rational
parameterizations for all Shimura curves of genus $0$. In Appendix
\ref{appendix: Mestre}, we give Mestre obstructions for Shimura
curves with $D<80$. Note that the Mestre obstruction for a moduli
point in $\sA_2\backslash H_4$ with Igusa invariants
$J=[J_2,J_4,J_6,J_{10}]$ is a quaternion algebra over $\Q(J)$ such
that there is a curve of genus $2$ over a field $K$ with Igusa
invariants $J$ if and only if $K$ splits the quaternion algebra.
Our computation suggests that when a Shimura curve $\fX$ in $\QQ_D$ is
isomorphic to $X_0^D(1)/w_D$, there are only a finite number of
isomorphism classes of algebraic curves of genus $2$ over $\R$ such
that their Jacobians lie on $\fX$. For example, for the case $D=14$,
there are exactly four such curves (see Example \ref{example: Mestre
  14}).

\section{Abelian surfaces with QM and their principal
    polarizations}
\label{section: QM}
To facilitate subsequent discussion, we shall review some properties
of abelian surfaces with quaternionic multiplication (QM) and their
principal polarizations in this section.

Let $A=\C^2/\Lambda$ be the complex torus associated to a lattice
$\Lambda$ of rank $4$ in $\C^2$. If $A$ has quaternionic
multiplication by an order $\O$ in an indefinite quaternionic algebra
$\B$ of discriminant $D>1$ over $\Q$, then $\Lambda$ is
naturally a left $\O$-module and $\Lambda\otimes_\Z\Q$ is a left
$\B$-module. Since $\B$ is a division algebra of dimension $4$
over $\Q$, this $\B$-module must be free of rank $1$, i.e., we have
$\Lambda\otimes\Q=\B.v$ for some $v\in\C^2$ and consequently
$\Lambda=\I.v$ for some left ideal $\I$ of $\O$. Assume that $\O$ is
an Eichler order. By a well-known result of Eichler, $\I$ is
necessarily a principal ideal and thus we may 
as well assume that $\Lambda=\O.v$. Now for each $\alpha\in\B$,
there exists a unique $\SM abcd\in M(2,\R)$ such that $\alpha.v=\SM
abcd v$. The map $\alpha\mapsto\SM abcd$ defines an embedding $\phi$
from $\B$ into $M(2,\R)$. In summary, we find that if $\C^2/\Lambda$
has quaternionic multiplication by an Eichler order $\O$, then
$\Lambda=\phi(\O)v$ for some embedding $\phi:\B\to M(2,\R)$ and some
vector $v\in\C^2$.

Now any two different embeddings $\phi_1$ and $\phi_2$ of $\B$
into $M(2,\R)$ are related by conjugation by a matrix in
$\mathrm{GL}(2,\R)$, say, $\phi_2=\gamma^{-1}\phi_1\gamma$. Then
$$
  \C^2/\phi_2(\O)v\simeq\C^2/\phi_1(\O)v', \qquad v'=\gamma v.
$$
Thus, if we let $\phi$ be a fixed embedding of $\B$ into $M(2,\R)$,
then every complex torus with quaternionic multiplication by $\O$ is
isomorphic to $\C^2/\phi(\O)v$ for some $v=\left(\begin{smallmatrix}
z_1\\z_2\end{smallmatrix}\right)\in\C^2$. Switching $z_1$ and $z_2$ if
necessary, we may assume that $\Im(z_1/z_2)>0$. Setting $z=z_1/z_2$,
we find that $\C^2/\phi(\O)v$ is isomorphic to $\C^2/\phi(\O)v_z$ with
$v_z=\left(\begin{smallmatrix}z\\1\end{smallmatrix}\right)$, $z\in\H$.
Also, it is easy to see that if $\gamma\in\phi(\O^1)$, then
$\C^2/\phi(\O)v_z\simeq \C^2/\phi(\O)v_{\gamma z}$, where $\O^1$ is
the norm-one group of $\O$. Thus, there is a well-defined map from the
Shimura curve $\phi(\O)\backslash\H$ to the set of isomorphism classes
of complex tori of dimension $2$ with quaternionic multiplication by
$\O$.

From now on, we assume that $\O$ is a maximal order. We fix an
embedding $\phi$ of $\B$ into $M(2,\R)$ and for $z\in\H$, we let
$$
  \Lambda_z=\phi(\O)v_z, \qquad A_z=\C^2/\Lambda_z,
$$
where $v_z=\left(\begin{smallmatrix}z\\1\end{smallmatrix}\right)$.
We now describe its principal polarizations.

Let $\mu$ be a pure quaternion in $\B$ such that $\nm(\mu)>0$ and
$E_\mu$ be the symplectic form on $\C^2$ defined by setting
\begin{equation} \label{equation: Emu 0}
  E_\mu(\phi(\alpha)v_z,\phi(\beta)v_z)=\tr(\mu^{-1}\alpha\ol\beta)
\end{equation}
for $\alpha,\beta\in\B$ and extending $\R$-linearly to $\C^2$.
It can be shown that there exists an integer $k$ such that $kE_\mu$
is a Riemann form on $A_z$. In particular, $A_z$ can be polarized.
Conversely, if $A_z$ is a simple abelian surface (i.e., $z$ is not a
CM-point on the Shimura curve) and $E$ is a Riemann
form on $A_z$, then $E=E_\mu$ for some pure quaternion $\mu$ with
positive norm (see, e.g., \cite[Proposition 4.6]{Shimura-CM1} or
\cite[Theorem 4.3, Chapter IX]{Lang}). Note that the requirement that
$E$ takes integer values on $\Lambda_z\times\Lambda_z$ implies that
$$
  \mu^{-1}\in\O^\vee:=\{\alpha\in\B:\tr(\alpha\ol\beta)\in\Z
  \text{ for all }\beta\in\O\},
$$
i.e., $\mu$ is contained in the different of $\O$.
For the case of a maximal order $\O$, this means that $\mu$ is an
element in $\O$ whose trace is $0$ and whose norm is a positive
integer multiple of $D$. In fact, if $E_\mu$ defines a principal
polarization, then the norm of $\mu$ must be precisely $D$. 

Note that if we pick a pure quaternion $\mu$ with a positive norm
from $\B$, the Hermitian form associated to $E_\mu$
could be positive definite or negative definite. For the purpose of
practical computation later on, we need a simple criterion when the
Hermitian form is positive definite.

\begin{Lemma} Let $\mu$ be a nonzero element of trace $0$ and positive
  norm in $\B$. Let $H_\mu:\C^2\times\C^2\to\C$ be the Hermitian form
  defined by $H_\mu(v_1,v_2)=E_\mu(iv_1,v_2)$ for
  $v_1,v_2\in\C^2$. Assume that $\phi(\mu)=\SM abc{-a}$. Then $H_\mu$
  is positive definite if and only if $c>0$.
\end{Lemma}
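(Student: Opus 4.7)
The plan is to write out the matrix of $H_\mu$ in the standard $\C$-basis $e_1,e_2$ of $\C^2$ and then apply Sylvester's criterion. The key preliminary observation is that the defining formula $E_\mu(\phi(\alpha)v_z,\phi(\beta)v_z)=\tr(\mu^{-1}\alpha\ol\beta)$ is $\R$-bilinear in $\alpha$ and $\beta$; so, after extending $\phi$ to the isomorphism $\B\otimes_\Q\R\simeq M(2,\R)$ (under which quaternion conjugation corresponds to the matrix adjugate $\ol M=(\tr M)I-M$), the same formula computes $E_\mu$ on every vector of the form $Av_z$ with $A\in M(2,\R)$, and such vectors exhaust $\C^2$.

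First I would find $J\in M(2,\R)$ with $Jv_z=iv_z$. Writing $z=x+iy$ and solving the linear system gives
$$
J=\frac1y\begin{pmatrix}x & -(x^2+y^2)\\1 & -x\end{pmatrix},
$$
which is automatically of trace $0$ and of determinant $1$. Since every $A\in M(2,\R)$ has real entries, it commutes with scalar multiplication by $i$ on $\C^2$; consequently, for $v=Av_z$ one has $iv=A(iv_z)=AJv_z$, and hence
$$
E_\mu(iv,v)=\tr\bigl(\phi(\mu)^{-1}\,AJ\,\ol A\bigr).
$$

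Next I would choose $A_1=\SM{0}{1}{0}{0}$ and $A_2=\SM{0}{0}{0}{1}$, so that $A_jv_z=e_j$. Using $\phi(\mu)^{-1}=-\frac1D\SM abc{-a}$, a short $2\times 2$ computation of the three products $\phi(\mu)^{-1}A_jJ\,\ol{A_k}$, followed by taking traces, yields
$$
[H_\mu]=\frac1{Dy}\begin{pmatrix}c & -a\\-a & -b\end{pmatrix}.
$$
(The check $E_\mu(e_1,e_2)=\tr(\phi(\mu)^{-1}A_1\ol{A_2})=0$ confirms that this really is the Hermitian matrix, not merely its real part.) Its determinant equals $(-a^2-bc)/(D^2y^2)=\nm(\mu)/(D^2y^2)=1/(Dy^2)>0$, so $[H_\mu]$ is always strictly sign-definite; by Sylvester's criterion it is positive definite precisely when its $(1,1)$-entry $c/(Dy)$ is positive, i.e.\ precisely when $c>0$.

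The only real obstacle is making the $\R$-linear extension step airtight --- in particular, identifying quaternion conjugation on $\B$ with the matrix adjugate on $M(2,\R)$, and verifying that the formula $\tr(\mu^{-1}\alpha\ol\beta)$ still computes $E_\mu$ on vectors $Av_z$ with $A$ only in $M(2,\R)$ rather than in $\phi(\O)$. Once that is in place, everything afterward is routine matrix algebra.
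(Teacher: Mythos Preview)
Your proof is correct, and the strategy differs from the paper's in one instructive respect. The paper does not compute the full matrix of $H_\mu$; instead it invokes the known fact (from \cite[Chapter IX, Theorem 4.3]{Lang}) that $H_\mu$ is automatically \emph{definite}, and then evaluates $H_\mu(v,v)$ at the single vector $v=\SM1000 v_z$, obtaining $H_\mu(v,v)=c(x^2+y^2)/(y\,\nm(\mu))$, whose sign is that of $c$. Your route---writing out the full Gram matrix $\frac{1}{\nm(\mu)\,y}\SM{c}{-a}{-a}{-b}$ in the standard basis and applying Sylvester---is slightly longer but has the advantage of being self-contained: the positivity of the determinant $\nm(\mu)/(\nm(\mu)\,y)^2$ recovers definiteness without the external citation. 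Both arguments rest on the same two ingredients you correctly isolated: the matrix $J$ realizing $iv_z=Jv_z$, and the $\R$-bilinear extension of the formula $E_\mu(\phi(\alpha)v_z,\phi(\beta)v_z)=\tr(\mu^{-1}\alpha\ol\beta)$ from $\phi(\B)$ to all of $M(2,\R)$ (which the paper also uses implicitly, since $\SM1000$ is singular and hence not in $\phi(\B)$). One small slip: you write $\phi(\mu)^{-1}=-\frac1D\phi(\mu)$, but the lemma as stated allows any positive norm, not just $\nm(\mu)=D$; replace $D$ by $\nm(\mu)$ throughout and nothing changes, since only signs matter.
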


\begin{proof}
  Assume that $\mu$ is a pure quaternion with a positive norm.
  The definiteness of the Hermitian form $H_\mu$ is known
  (for instance, see the proof of \cite[Theorem 4.3, Chapter
  IX]{Lang}). Thus, we only need to know the sign of $H_\mu(v,v)$ at
  one particular nonzero $v\in\C^2$. Here we choose
  $$
    v=\M 1000v_z.
  $$
  Assume that $z=x+iy$. Then we have
  $$
    iv_z=\frac 1y\M x{-x^2-y^2}1{-x}v_z,
  $$
  and
  $$
    iv=\M1000(iv_z)=\frac1y\M1000\M x{-x^2-y^2}1{-x}v_z
  $$
  Therefore, if $\phi(\mu)=\SM abc{-a}$, then
  \begin{equation*}
  \begin{split}
    H_\mu(v,v)&=E_\mu(iv,v)=\frac1{y\nm(\mu)}\tr\left(
    \M{-a}{-b}{-c}a\M1000\M{x}{-x^2-y^2}{1}{-x}\M0001\right) \\
  &=\frac{c(x^2+y^2)}{y\nm(\mu)}.
  \end{split}
  \end{equation*}
  We conclude that $H_\mu$ is positive definite if and only if $c>0$.
\end{proof}

In view of this lemma, it is natural to introduce the following
definition.

\begin{Definition} An element $\mu$ of trace $0$ and positive norm in $\B$
  is said to be \emph{positive} (with respect to $\phi$) if the
  $(2,1)$-entry of $\phi(\mu)$ is positive. If $\mu$ is not positive,
  then we say it is \emph{negative}.
\end{Definition}

\begin{Notation} We let $\sD(\O)$ denote the set of all elements of
  trace $0$ and norm $D$ in $\O$ and set
  $\sD^+(\O):=\{\mu\in\sD(\O):\mu\text{ positive}\}$.

  For $\mu\in\sD^+(\O)$, let $\rho_\mu$ denote the
  principal polarization of $A_z$ corresponding to $E_\mu$, and set
  $$
    \fX_\mu:=\{(A_z,\rho_\mu):~z\in X_0^D(1)\}.
  $$
\end{Notation}

\begin{Lemma} \label{lemma: positive}
  Let $\mu$ be an element of trace $0$ and norm $D$ in $\B$. Assume
  that $\mu$ is positive. Then for $\gamma\in\B^\times$, the element
  $\gamma^{-1}\mu\gamma$ is positive if $\nm(\gamma)>0$ and negative
  if $\nm(\gamma)<0$.
\end{Lemma}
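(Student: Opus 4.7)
The plan is to compute directly the $(2,1)$-entry of $\phi(\gamma^{-1}\mu\gamma)$ and read off its sign. Since $\phi$ is an embedding of $\B$ into $M(2,\R)$,
$$
\phi(\gamma^{-1}\mu\gamma)=\phi(\gamma)^{-1}\phi(\mu)\phi(\gamma).
$$
Writing $\phi(\mu)=\SM abc{-a}$, so that $c>0$ by the positivity hypothesis on $\mu$, and $\phi(\gamma)=\SM pqrs$, so that $\det\phi(\gamma)=\nm(\gamma)$, a routine matrix multiplication will show that the $(2,1)$-entry of $\phi(\gamma^{-1}\mu\gamma)$ equals
$$
\frac{cp^2-2apr-br^2}{\nm(\gamma)}.
$$

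The heart of the argument is then the observation that the numerator $Q(p,r):=cp^2-2apr-br^2$ is a positive definite binary quadratic form in $(p,r)$. Indeed, its discriminant as a quadratic form in $p,r$ is $4a^2+4bc=-4\det\phi(\mu)=-4\nm(\mu)=-4D<0$, while its leading coefficient $c$ is positive. Since $\phi(\gamma)\in\GL(2,\R)$, the first column $(p,r)^t$ of $\phi(\gamma)$ is nonzero, so $Q(p,r)>0$. Therefore the $(2,1)$-entry of $\phi(\gamma^{-1}\mu\gamma)$ has the same sign as $\nm(\gamma)$, which is exactly the claimed statement. There is no real obstacle here; the only point requiring any care is the verification that $Q$ is positive definite, and this is an immediate consequence of the three hypotheses that $\mu$ has trace $0$, norm $D$, and is positive with respect to $\phi$.
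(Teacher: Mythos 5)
Your proof is correct and follows essentially the same route as the paper: both compute the $(2,1)$-entry of $\phi(\gamma^{-1}\mu\gamma)$ as $(cp^2-2apr-br^2)/\nm(\gamma)$ and observe that the numerator is a positive definite form of discriminant $-4D$ evaluated at the nonzero first column of $\phi(\gamma)$. Your write-up is in fact slightly more careful than the paper's, since you make explicit that the leading coefficient $c>0$ (from the positivity of $\mu$) is what forces the form to be positive rather than negative definite.
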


\begin{proof} Assume that $\phi(\mu)=\SM abc{-a}$ and
  $\phi(\gamma)=\SM xyzw$. We have
  $$
    \phi(\gamma^{-1}\mu\gamma)=\M\ast\ast
    {(cx^2-2axz-bz^2)/\nm(\gamma)}\ast.
  $$
  The quadratic form $cx^2-2axz-bz^2$ has discriminant $4(a^2+bc)=-4D$
  and hence is positive definite. Then the lemma follows.
\end{proof}

Of course, two elements in $\sD^+(\O)$ may define isomorphic principal
polarizations.

\begin{Lemma}[{\cite[Theorem 2.2]{Rotger-Crelle}}] Let $\mu_1$ and
  $\mu_2$ be two elements in $\sD^+(\O)$. Then
  $\rho_{\mu_1}\simeq\rho_{\mu_2}$ if and only if there exists an
  element $\alpha$ in the group $\O^\times$ of units in $\O$ such that
  $\mu_1=\ol\alpha\mu_2\alpha$.
\end{Lemma}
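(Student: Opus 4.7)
The plan is to translate the isomorphism of principal polarizations into an algebraic identity in $\B$ using the definition \eqref{equation: Emu 0} of $E_\mu$ together with the nondegeneracy of the reduced trace pairing. For the ``if'' direction, suppose $\mu_1=\ol\alpha\mu_2\alpha$ for some $\alpha\in\O^\times$. Since $\alpha$ is a unit of $\O$, left multiplication by $\phi(\alpha)$ preserves the lattice $\Lambda_z=\phi(\O)v_z$ and so induces an automorphism $f$ of $A_z$. By \eqref{equation: Emu 0} and the cyclic property of the trace,
\[
(f^*E_{\mu_2})(\phi(\beta_1)v_z,\phi(\beta_2)v_z)
=\tr(\mu_2^{-1}\alpha\beta_1\ol\beta_2\ol\alpha)
=\tr(\ol\alpha\mu_2^{-1}\alpha\cdot\beta_1\ol\beta_2),
\]
and since $\nm(\alpha)=\pm1$ forces $\ol\alpha\mu_2^{-1}\alpha=(\ol\alpha\mu_2\alpha)^{-1}=\mu_1^{-1}$, this equals $E_{\mu_1}(\phi(\beta_1)v_z,\phi(\beta_2)v_z)$, so $f$ realizes the desired isomorphism of polarized abelian surfaces.

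For the ``only if'' direction, I would work at a generic (non-CM) point $z$, where $\End(A_z)=\O$, so any isomorphism between $(A_z,\rho_{\mu_1})$ and $(A_z,\rho_{\mu_2})$ is induced by some $\phi(\alpha)$ with $\alpha\in\O^\times$. Running the calculation above in reverse gives $\tr\bigl((\mu_1^{-1}-\ol\alpha\mu_2^{-1}\alpha)\beta_1\ol\beta_2\bigr)=0$ for all $\beta_1,\beta_2\in\O$; taking $\beta_2=1$ and letting $\beta_1$ range over a $\Z$-basis of $\O$ (which spans $\B$ over $\Q$), the nondegeneracy of the reduced trace pairing yields $\mu_1=\ol\alpha\mu_2\alpha$. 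Compatibility with $\sD^+(\O)$ follows from Lemma \ref{lemma: positive}: since $\ol\alpha\mu_2\alpha=\nm(\alpha)\cdot\alpha^{-1}\mu_2\alpha$ with $\nm(\alpha)=\pm1$, a short case analysis on the sign of $\nm(\alpha)$ shows that the scalar $\nm(\alpha)$ exactly cancels the sign flip produced by conjugation by $\alpha$, so $\mu_2$ positive implies $\mu_1$ positive.

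The main obstacle is that at CM-points $\End(A_z)$ can be strictly larger than $\O$, so the hypothesis that the isomorphism is induced by $\phi(\O^\times)$ is not immediate there. However, the identity $\mu_1=\ol\alpha\mu_2\alpha$ is intrinsic to $\O$ and independent of $z$, so establishing it on the dense open locus of non-CM points transfers the conclusion to all of $X_0^D(1)$; alternatively, one can work throughout with QM-structured abelian surfaces $(A_z,\iota,\rho)$, for which any isomorphism preserving the $\O$-action is automatically given by some $\phi(\alpha)$ with $\alpha\in\O^\times$.
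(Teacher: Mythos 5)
The paper offers no proof of this lemma at all: it is quoted verbatim from Rotger (\cite[Theorem 2.2]{Rotger-Crelle}), so there is nothing internal to compare against. Judged on its own, your argument is correct and is the standard one: translate the isomorphism of polarizations through the trace-form description \eqref{equation: Emu 0} of the Riemann forms. The key identities check out: for a unit $\alpha$ one has $\nm(\alpha)=\pm1$, so $\ol\alpha\mu_2^{-1}\alpha=(\ol\alpha\mu_2\alpha)^{-1}$, and since $\mu_2^{-1}=-\mu_2/D$ the computation $\tr(\mu_2^{-1}\alpha\beta_1\ol\beta_2\ol\alpha)=\tr(\ol\alpha\mu_2^{-1}\alpha\cdot\beta_1\ol\beta_2)$ does give $f^*E_{\mu_2}=E_{\mu_1}$ exactly when $\mu_1=\ol\alpha\mu_2\alpha$. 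Your sign analysis via Lemma \ref{lemma: positive} is also right: writing $\ol\alpha\mu_2\alpha=\nm(\alpha)\,\alpha^{-1}\mu_2\alpha$, the factor $\nm(\alpha)=-1$ precisely undoes the positivity reversal caused by conjugation by an element of negative norm, so $\sD^+(\O)$ is preserved in both cases.

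The one place that genuinely needs the care you flag is the converse: the identification $\mathrm{Aut}(A_z)=\phi(\O^\times)$ (acting by left multiplication) holds because $\{\gamma\in\B:\gamma\O\subseteq\O\}=\O$ and, for non-CM $z$, $\End(A_z)\otimes\Q=\B$; at CM points $\End(A_z)$ is strictly larger and the conclusion as stated would be about isomorphisms respecting the $\O$-action. Your resolution (prove the algebraic identity at a non-CM point, where it is independent of $z$, or work with QM-structured surfaces throughout) is the right fix and matches how the lemma is actually used in the paper, namely as input to the count of polarizations on $A$ with $\End(A)\simeq\O$.
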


Following \cite{Rotger-Crelle}, we call $\ol\alpha\mu\alpha$ the
\emph{Pollak conjugation} of $\mu$ by $\alpha$ (cf. \cite{Pollak}).

\begin{Definition}
We let the equivalence relation $\sim_p$ on $\sD^+(\O)$ be defined
by $\mu_1\sim_p\mu_2$ if and only if there exists a unit $\alpha$ in
$\O$ such that $\mu_1=\ol\alpha\mu_2\alpha$. Also, we let
$$
  \~\sD(\O):=\sD^+(\O)/\sim_p
$$
denote the set of equivalence classes under $\sim_p$.
\end{Definition}

\begin{Corollary}[{\cite[Theorem 1.1 and Corollary 3.7]{Rotger-Crelle}}]
  Let $A$ be an abelian surface such that $\End(A)\simeq\O$. Then the
  number $\pi(A)$ of isomorphism classes of principal polarizations on
  $A$ is equal to the cardinality of $\~\sD(\O)$. Consequently, we have
  $$
    \pi(A)=\begin{cases}
    (h(-D)+h(-4D))/2, &\text{if }D\equiv 3\mod 4, \\
    h(-4D)/2, &\text{else}. \end{cases}
  $$
\end{Corollary}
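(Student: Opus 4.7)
The plan is first to identify $\pi(A)$ with $|\~\sD(\O)|$ by combining the preceding classification of Riemann forms on $A$ with the lemma of Rotger just stated, and then to evaluate $|\~\sD(\O)|$ by means of Eichler's formula for optimal embeddings of quadratic orders into a maximal order of $\B$.

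For the first step, since $\End(A) \simeq \O$ is a division algebra, $A$ is simple; so by the analysis preceding the definition of $\sD(\O)$, any Riemann form on $A$ is of the form $E_\mu$ for some pure quaternion $\mu \in \B$ of positive norm. The integrality requirement on $\Lambda_z$ forces $\mu^{-1} \in \O^\vee$, and because $\O$ is maximal its different is principal --- generated by any $\mu_0$ with $\mu_0^2+D=0$ --- so $\mu \in \O$ with $\tr(\mu)=0$ and $\nm(\mu)$ a positive multiple of $D$. The principality of the polarization pins down $\nm(\mu)=D$ exactly, and by the earlier lemma positive definiteness of $H_\mu$ is equivalent to $\mu\in\sD^+(\O)$. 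The preceding lemma then identifies isomorphism classes of principal polarizations with Pollak-equivalence classes, giving $\pi(A)=|\~\sD(\O)|$.

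For the explicit count, each $\mu \in \sD(\O)$ determines an embedding $\iota_\mu\colon \Z[\sqrt{-D}]\hookrightarrow \O$ by $\sqrt{-D}\mapsto \mu$, and $\mu\leftrightarrow\iota_\mu$ is a bijection. The intersection $R_\mu:=\O\cap\Q(\mu)$ is an order of $K=\Q(\sqrt{-D})$ containing $\Z[\sqrt{-D}]$; it equals $\Z[\sqrt{-D}]$ itself except when $D\equiv 3\mod 4$, in which case it can also be the maximal order $\O_K$. Because every prime $p\mid D$ divides $\operatorname{disc}(K/\Q)$ and hence ramifies in $K$, every local factor in Eichler's mass formula for optimal embeddings $R\hookrightarrow \O$ of a maximal order in $\B_D$ equals $1$. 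Thus the total number of $\O^\times$-conjugacy classes of optimal embeddings, summed over orders $R\supseteq\Z[\sqrt{-D}]$ of $K$, is $h(-4D)$ when $D\not\equiv 3\mod 4$ and $h(-4D)+h(-D)=\~{h}(D)$ when $D\equiv 3\mod 4$.

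Finally, to pass from $\O^\times$-conjugation to Pollak equivalence, I would use the identity $\bar\alpha\mu\alpha = \nm(\alpha)\,\alpha^{-1}\mu\alpha$; since $\nm(\alpha)=\pm 1$ for $\alpha\in\O^\times$, Pollak equivalence on $\sD(\O)$ is generated by ordinary conjugation together with the involution $\mu\mapsto -\mu$ induced by norm-$(-1)$ units. Because $\mu\in\sD^+(\O)$ implies $-\mu\in\sD^-(\O)$, this involution glues each pair $\{[\mu],[-\mu]\}$ of conjugacy classes on $\sD(\O)$ into a single Pollak class on $\sD^+(\O)$, halving the count to $\~{h}(D)/2$. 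The main technical obstacle is precisely this last bookkeeping step: one must verify that no $\O^\times$-conjugacy class in $\sD(\O)$ is stabilised by negation (which would spoil the clean factor of two) and that $\O^\times$ genuinely contains norm-$(-1)$ elements (so the identification is available). Both facts rest on strong approximation and the arithmetic of the orders $R_\mu$, as worked out in detail in \cite[Thm.~1.1, Cor.~3.7]{Rotger-Crelle}.
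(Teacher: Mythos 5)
The paper itself offers no proof of this Corollary: it is quoted directly from Rotger, so there is no internal argument to compare yours against. Your reconstruction does follow the route of the cited source. The identification $\pi(A)=|\~\sD(\O)|$ via the classification of Riemann forms as $E_\mu$ (with $\mu$ pure, in the different, of norm exactly $D$ for principal polarizations) together with the Pollak-conjugation lemma is correct, and so is the key arithmetic input: every prime $p\mid D$ ramifies in $\Q(\sqrt{-D})$, so all local factors in Eichler's optimal-embedding formula equal $1$, and the only orders to consider are $\Z[\sqrt{-D}]$ and, when $D\equiv 3\pmod 4$, also the maximal order. This correctly produces the quantity $\~h(D)$.

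The final step, however, is both imprecise and is where all the remaining content sits. From $\ol\alpha\mu\alpha=\nm(\alpha)\,\alpha^{-1}\mu\alpha$, Pollak equivalence is generated by $\O^1$-conjugation together with the single operation $\mu\mapsto-\epsilon^{-1}\mu\epsilon=\ol\epsilon\mu\epsilon$ for one fixed unit $\epsilon$ of norm $-1$; it is \emph{not} the group generated by all of $\O^\times$-conjugation and negation, since that larger group contains $\mu\mapsto\epsilon^{-1}\mu\epsilon$ and $\mu\mapsto-\mu$ separately, neither of which preserves $\sD^+(\O)$, whereas $\sim_p$ does. Consequently the halving is not achieved by "gluing the pair $\{[\mu],[-\mu]\}$": negation commutes with conjugation by norm-one units and, by Lemma \ref{lemma: positive}, interchanges $\sD^+(\O)$ and $\sD^-(\O)$, so it already induces a bijection between the $\O^1$-classes of the two halves, and restricting to $\sD^+(\O)$ accounts for a factor of two before any Pollak identification is made. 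What actually must be proved is how the residual operation $\mu\mapsto\ol\epsilon\mu\epsilon$ acts on the set of $\O^1$-classes in $\sD^+(\O)$, and reconciling this with the Eichler count (which enumerates normalized embeddings, i.e.\ elements of $\sD^+(\O)$, modulo $\O^1$) is precisely the content of Rotger's theorem. Your stated worry --- that some $\O^\times$-class might be "stabilised by negation" --- does not match this difficulty, and deferring the whole step to the reference means the factor $1/2$ in the final formula is asserted rather than derived. As a summary of a cited result this is acceptable; as a proof it is incomplete at exactly the point where the statement is nontrivial.
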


Note that it is possible that two elements $\mu_1$ and $\mu_2$ in
$\sD^+(\O)$ define two inequivalent principal polarizations, but
$\fX_{\mu_1}=\fX_{\mu_2}$. That is, it may happen that for each
$z_1\in X_0^D(1)$, there is another point $z_2\in X_0^D(1)$ such that
$(A_{z_1},\rho_{\mu_1})$ and $(A_{z_2},\rho_{\mu_2})$ are the same
point in $\sA_2$. For example, we can easily check that if $\omega\in
N_\B^+(\O)$, the normalizer of $\O$ in $\B$ with positive norm, then
$(A_z,\rho_\mu)$ and $(A_{\phi(\omega)z},\rho_{\omega\mu\omega^{-1}})$
are isomorphic principally polarized abelian surfaces. In particular,
we have $\fX_\mu=\fX_{\omega\mu\omega^{-1}}$. In fact, the converse
statement is also true.

\begin{Proposition}[{\cite[Theorem 6.4]{Rotger-Igusa}}]
  \label{proposition: fX1=fX2}
  Let $\mu_1,\mu_2\in\sD^+(\O)$. Then $\fX_{\mu_1}=\fX_{\mu_2}$ if and
  only if there exists $\omega\in N_\B^+(\O)$ such that the
  equivalence class of $\omega\mu_1\omega^{-1}$ in $\~\sD(\O)$
  is the same as that of $\mu_2$.
\end{Proposition}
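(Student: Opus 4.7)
The plan is to treat the two directions separately. The \emph{if} direction is short: given $\omega\in N_\B^+(\O)$ with $\omega\mu_1\omega^{-1}\sim_p\mu_2$, the remark preceding the statement already gives $(A_z,\rho_{\mu_1})\simeq(A_{\phi(\omega)z},\rho_{\omega\mu_1\omega^{-1}})$ for every $z\in X_0^D(1)$, so $\fX_{\mu_1}=\fX_{\omega\mu_1\omega^{-1}}$; the lemma of Rotger recalled above converts the Pollak equivalence $\omega\mu_1\omega^{-1}\sim_p\mu_2$ into $\rho_{\omega\mu_1\omega^{-1}}\simeq\rho_{\mu_2}$ on \emph{each} $A_z$, hence $\fX_{\omega\mu_1\omega^{-1}}=\fX_{\mu_2}$. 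Composing the two equalities yields $\fX_{\mu_1}=\fX_{\mu_2}$.

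For the \emph{only if} direction, I would begin by choosing a non-CM point $z_1\in X_0^D(1)$ (possible since CM-points are countable), so that $\End(A_{z_1})\simeq\O$. The hypothesis provides $z_2\in X_0^D(1)$ with $(A_{z_1},\rho_{\mu_1})\simeq(A_{z_2},\rho_{\mu_2})$, and I lift this isomorphism to $M\in\GL(2,\C)$ satisfying $M\Lambda_{z_1}=\Lambda_{z_2}$. The induced ring automorphism of $\O\simeq\End(A_{z_i})$ must, by Skolem--Noether applied to $\B$, be inner conjugation $\alpha\mapsto\omega\alpha\omega^{-1}$ by some $\omega\in N_\B(\O)$; equivalently $M\phi(\alpha)M^{-1}=\phi(\omega\alpha\omega^{-1})$ for all $\alpha\in\O$. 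Thus $\phi(\omega)^{-1}M$ centralizes $\phi(\O)$, and since $\phi(\O)$ spans $M(2,\R)$ over $\R$ while the centralizer of $M(2,\R)$ in $M(2,\C)$ is $\C\cdot I$, we conclude $M=c\,\phi(\omega)$ for some $c\in\C^\times$.

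Writing the bottom row of $\phi(\omega)$ as $(c',d)$, so that $\phi(\omega)v_{z_1}=(c'z_1+d)\,v_{\phi(\omega)z_1}$, the lattice equation $M\Lambda_{z_1}=\Lambda_{z_2}$ becomes
\[
  c(c'z_1+d)\,\phi(\O)v_{\phi(\omega)z_1}=\phi(\O)v_{z_2}.
\]
Because $z_1$ is not a CM-point, the only $\lambda\in\C^\times$ preserving a lattice of the form $\phi(\O)v_w$ is $\lambda=\pm 1$; combined with the fact that two $\O$-isomorphic tori $A_w$, $A_{w'}$ come from the same point of the Shimura curve, this (after replacing $\omega$ by a suitable $\gamma\omega$ with $\gamma\in\O^1$) forces $\phi(\omega)z_1=z_2$ and $|c(c'z_1+d)|=1$. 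Pulling $E_{\mu_2}$ back along $M$, using that Riemann forms satisfy $E(\lambda u,\lambda v)=|\lambda|^2 E(u,v)$ for $\lambda\in\C$, and invoking the identities $\ol\omega\,\omega=\nm(\omega)$, $\mu_2^{-1}=-\mu_2/D$, and the cyclic property of $\tr$, a direct computation collapses $E_{\mu_2}(Mu,Mv)$ down to $E_{\omega^{-1}\mu_2\omega}(u,v)$. Matching with $E_{\mu_1}(u,v)$ yields the clean identity $\omega^{-1}\mu_2\omega=\mu_1$, i.e.\ $\omega\mu_1\omega^{-1}=\mu_2$. Since both sides lie in $\sD^+(\O)$, Lemma \ref{lemma: positive} forces $\nm(\omega)>0$, so $\omega\in N_\B^+(\O)$, and the class of $\omega\mu_1\omega^{-1}$ in $\~\sD(\O)$ agrees with (in fact equals) that of $\mu_2$.

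The main obstacle I expect is the polarization bookkeeping in the last step: threading the conjugation $\alpha\mapsto\omega\alpha\omega^{-1}$ through the defining formula \eqref{equation: Emu 0} for $E_\mu$ and folding the extraneous factor $|c(c'z_1+d)|^2=1$ out of the picture so that the trace collapses cleanly to $E_{\omega^{-1}\mu_2\omega}$ rather than to some other Pollak conjugate. The Skolem--Noether argument and the generic-$z_1$ analysis of the lattice are, by contrast, essentially formal once the setup is in place.
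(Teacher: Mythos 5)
The paper gives no proof of this proposition---it is imported verbatim from \cite[Theorem 6.4]{Rotger-Igusa}---so your argument can only be judged on its own terms. The \emph{if} direction is fine. In the \emph{only if} direction there is a genuine gap at the hinge of the argument: the Skolem--Noether element $\omega\in N_\B(\O)$ is determined by the automorphism $\alpha\mapsto M\phi(\alpha)M^{-1}$ only up to $\Q^\times$, so the sign of $\nm(\omega)$ is an invariant of the given isomorphism and nothing forces it to be positive. Your subsequent steps silently assume $\nm(\omega)>0$: the equation $\phi(\omega)z_1=z_2$ presupposes that $\phi(\omega)$ preserves $\H$, i.e.\ $\det\phi(\omega)=\nm(\omega)>0$, and no replacement of $\omega$ by $\gamma\omega$ with $\gamma\in\O^1$ can change this sign (nor does such a replacement preserve the relation $M=c\,\phi(\omega)$). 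If one instead tracks the unit $\gamma_1\in\O^\times$ defined by $Mv_{z_1}=\phi(\gamma_1)v_{z_2}$, one finds $M\phi(\alpha)v_{z_1}=\phi(\omega\alpha\omega^{-1}\gamma_1)v_{z_2}$ and the factor $\gamma_1\ol{\gamma_1}=\nm(\gamma_1)$, whose sign equals that of $\nm(\omega)$, survives the trace computation: when $\nm(\omega)<0$ the identity you land on is $\omega\mu_1\omega^{-1}=-\mu_2$, not $\omega\mu_1\omega^{-1}=\mu_2$. Your closing sentence (``since both sides lie in $\sD^+(\O)$, Lemma~\ref{lemma: positive} forces $\nm(\omega)>0$'') is therefore circular: positivity is being read off from an identity that was derived under the assumption of positivity.

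This excluded case is not vacuous; it is precisely why the proposition is phrased with classes in $\~\sD(\O)$ rather than exact conjugacy. Take $\epsilon\in\O^\times$ with $\nm(\epsilon)=-1$ (such units exist for Eichler orders in indefinite quaternion algebras) and $\mu_2=\ol\epsilon\mu_1\epsilon\in\sD^+(\O)$: then $\fX_{\mu_1}=\fX_{\mu_2}$ via $M=\phi(\epsilon)$ on each $A_z$, the Skolem--Noether element is $\omega=\epsilon$ of negative norm, and an $\omega'\in N_\B^+(\O)$ with $\omega'\mu_1\omega'^{-1}=\mu_2$ exactly would produce a negative-norm element of $N_\B(\O)$ anticommuting with $\mu_1$, which is impossible when $\B$ is non-twisting. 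The repair is short but needs to be said: when $\nm(\omega)<0$, put $\omega''=\epsilon\omega\in N_\B^+(\O)$ for some $\epsilon\in\O^\times$ of norm $-1$; then $\omega''\mu_1\omega''^{-1}=\epsilon(-\mu_2)\epsilon^{-1}=\epsilon\mu_2\ol\epsilon=\ol{\ol\epsilon}\,\mu_2\,\ol\epsilon\sim_p\mu_2$, which is exactly the conclusion of the proposition. With this case added (and the scalar $c(c'z_1+d)$ handled through the unit $\gamma_1$ rather than forced to have absolute value $1$), your argument goes through.
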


It follows from the proposition that the number $r_D$ of irreducible
components is equal to the number of orbits of $\~\sD^+(\O)$
under the action of the Atkin-Lehner group $W_D=N_\B^+(\O)/\Q^\times\O^1$,
where $\O^1$ denotes the norm-one group of $\O$. Therefore, to obtain
a formula for $r_D$, one would need to study the subgroup of $W_D$ that
fixes an equivalence class in $\~\sD(\O)$.

\begin{Definition}[{\cite[Definition 4.2]{Rotger-TAMS}}]
\label{definition: stable}
  For $\mu\in\sD^+(\O)$, the \emph{stable group} $W_\mu$ is defined to
  be
  $$
    W_\mu:=\{\omega\in W_D:\omega\mu\omega^{-1}\sim_p\mu\}.
  $$
\end{Definition}

It is clear that $W_\mu$ contains at least the identity
and $\mu$. Rotger \cite[Corollary 4.5]{Rotger-TAMS} showed that
$|W_\mu|$ can only be $2$ or $4$ and gave a complete description of
$W_\mu$.

\begin{Definition}[{\cite[Page 1542]{Rotger-TAMS}}] Let $\mu\in\sD^+(\O)$.
  We say $\mu$ is \emph{twisting} if there exists a positive divisor
  $m$ of $D$ and an element $\chi$ in $\O$ such that
  \begin{equation} \label{equation: twisting}
   \chi^2=m, \qquad \mu\chi=-\chi\mu.
  \end{equation}
\end{Definition}

Note that the conditions in \eqref{equation: twisting} implies that
the quaternion algebra $\JS{-D,m}\Q$ is isomorphic to $\B$.
Conversely, it is obvious that if $\JS{-D,m}\Q\simeq\B$ for some
positive divisor $m$ of $D$, then $\sD^+(\O)$ contains a twisting
element $\mu$. In such a case, we say the quaternion algebra $\B$ is
twisting.

\begin{Proposition}[{\cite[Corollary 4.5]{Rotger-TAMS}}]
  \label{proposition: Rotger Wmu}
  Let $\mu\in\sD^+(\O)$. If there exists a positive divisor $m$ of $D$
  and an element $\chi$ in $\O$ with properties \eqref{equation:
    twisting}, then $W_\mu=\gen{w_m,w_D}$; otherwise, $W_\mu=\gen{w_D}$.
\end{Proposition}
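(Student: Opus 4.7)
The plan is to lift each class in $W_\mu$ to an element $\omega \in N_\B^+(\O) \cap \O$, unwind the Pollak relation $\omega\mu\omega^{-1}=\ol\alpha\mu\alpha$ with $\alpha\in\O^\times$, and split the analysis according to whether $\nm(\alpha)=+1$ or $-1$. First I would observe that $\mu$ itself lies in $N_\B^+(\O)$ and represents $w_D$: since $\mu\in\O$ has $\mu^2=-D$ with $D$ squarefree, a local computation at each $p\mid D$ shows that $\O\mu$ coincides with the unique maximal two-sided ideal of $\O_p$, so $\mu$ normalizes $\O$ and projects to the class $w_D$. Taking $\omega=\mu$ and $\alpha=1$ in the Pollak relation gives $w_D\in W_\mu$ trivially.

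Next, for an arbitrary $\omega\in N_\B^+(\O)\cap\O$ representing a class in $W_\mu$ and an associated unit $\alpha\in\O^\times$ with $\omega\mu\omega^{-1}=\ol\alpha\mu\alpha$, I would use $\ol\alpha=\nm(\alpha)\alpha^{-1}$ to rewrite the relation as $\gamma\mu\gamma^{-1}=\nm(\alpha)\mu$, where $\gamma:=\alpha\omega\in\O$. In Case~1 ($\nm(\alpha)=1$), $\gamma$ commutes with $\mu$, hence $\gamma\in\Q(\mu)=\Q\oplus\Q\mu$. Writing $\gamma=p+q\mu$, the requirement that $\omega=\alpha^{-1}\gamma$ normalize $\O$ translates (via the decomposition $\B=\Q(\mu)\oplus\Q(\mu)^\perp$) into the condition that multiplication by $\ol\gamma/\gamma\in\Q(\sqrt{-D})$ (an element of reduced norm $1$) preserves the lattice $\O\cap\Q(\mu)^\perp$; this forces $\ol\gamma/\gamma$ to be a root of unity in $\Q(\sqrt{-D})$, and the only ones for $D\geq 6$ are $\pm 1$. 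Hence either $q=0$, so $\omega$ represents the identity, or $p=0$, so $\omega$ represents $w_D$. In either case $\omega\in\gen{w_D}$.

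In Case~2 ($\nm(\alpha)=-1$), $\gamma\mu=-\mu\gamma$. A short computation shows that anti-commutation with $\mu$ forces $\tr(\gamma)=0$, whence $\gamma^2=-\nm(\gamma)=-\nm(\alpha)\nm(\omega)=\nm(\omega)>0$. Normalizing $\omega$ within its Atkin-Lehner class to have $\nm(\omega)=m\mid D$ (which is possible by choosing the pure-quaternion representative $\omega_m\in\O$ of $w_m$ with $\omega_m^2=-m$), we deduce that $\gamma\in\O$ satisfies $\gamma^2=m$ and $\gamma\mu=-\mu\gamma$; i.e., $\gamma$ is a twisting element at level $m\mid D$. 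Hence if $\B$ admits no twisting, Case~2 cannot occur and $W_\mu=\gen{w_D}$.

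For the converse direction, given a twisting element $\chi\in\O$ with $\chi^2=m$, $\chi\mu=-\mu\chi$, and $m\mid D$, I would construct an explicit $\omega\in N_\B^+(\O)$ representing $w_m$ and lying in $W_\mu$. A natural candidate is $\omega=\alpha^{-1}\chi$ for $\alpha\in\O^\times$ of reduced norm $-1$: then $\nm(\omega)=m$, and $\omega\mu\omega^{-1}=\alpha^{-1}\chi\mu\chi^{-1}\alpha=\alpha^{-1}(-\mu)\alpha=\ol\alpha\mu\alpha$, placing $w_m$ in $W_\mu$. Combined with $w_D$ this gives $\gen{w_m,w_D}\subseteq W_\mu$, while the preceding Case~1 and Case~2 analysis yields the reverse inclusion. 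The main obstacle in this converse is arguing that $\chi$ itself normalizes $\O$ (via a local analysis identifying $\O\chi$ with the unique two-sided $\O$-ideal of norm $m$) and producing an appropriate sign-adjusting unit $\alpha\in\O^\times$ of reduced norm $-1$; these bookkeeping details are where the argument is most delicate.
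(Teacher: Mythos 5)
The paper offers no proof of this statement: it is quoted verbatim from Rotger \cite[Corollary 4.5]{Rotger-TAMS}, so there is nothing internal to compare against and your argument must stand on its own. Its overall architecture is the natural one and much of it is sound: $\mu$ does represent $w_D$ and lies in $W_\mu$; the reduction $\gamma\mu\gamma^{-1}=\nm(\alpha)\mu$ with $\gamma=\alpha\omega$ is correct; Case~1 correctly forces $\ol\gamma/\gamma$ to be a norm-one unit of an order in $\Q(\sqrt{-D})$, hence $\pm1$ since $D\geq 6$; and the converse construction $\omega=\alpha^{-1}\chi$ with $\nm(\alpha)=-1$ correctly places $w_m$ in $W_\mu$ (the needed facts — that $\chi$ generates the two-sided ideal $I_m$ and that norm $-1$ units exist — follow from the local structure at $p\mid D$ and Eichler's norm theorem).

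There are, however, two problems. The minor one: your parenthetical claim that each class $w_m$ has a \emph{pure} representative $\omega_m\in\O$ with $\omega_m^2=-m$ is false in general, since it requires $\Q(\sqrt{-m})\hookrightarrow\B_D$ (e.g.\ there is no square root of $-2$ in $\B_6$); what you actually need, and what is true, is a representative $\omega\in\O\cap N_\B^+(\O)$ with $\nm(\omega)=m$, purity then being \emph{deduced} for $\gamma=\alpha\omega$. The genuine gap is in the reverse inclusion. Your Cases~1 and~2 show only that $W_\mu\subseteq\{1,w_D\}\cup\{w_{m'}:m'\text{ is a twisting level for }(\O,\mu)\}$, and your converse shows $\gen{w_m,w_D}\subseteq W_\mu$. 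To conclude $W_\mu=\gen{w_m,w_D}$ you must also show that any two twisting levels $m,m'$ for the \emph{same} $\mu$ satisfy $w_{m'}\in\{1,w_m,w_{D/m},w_D\}$; otherwise $W_\mu$ could a priori have order $8$ or more. This uniqueness is real content of Rotger's corollary and does not follow from anything you wrote. It can be supplied by noting that all twisting elements lie in the rank-two lattice $\O\cap\gen{1,\mu}^\perp$, whose positive definite form $-\nm$ has determinant dividing $16D$, and that the reduced ambiguous form in its class determines the pair $\{m,D/m\}$ — essentially the content of Proposition \ref{proposition: ambiguous twisting} later in the paper — but as written your proof asserts the reverse inclusion without justifying this step.
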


Then the result \eqref{equation: Rotger rD} of Rotger about $r_D$
follows immediately from this lemma.
We summarise the discussion in this section in the following
proposition.

\begin{Proposition}[{\cite[Proposition 4.3]{Rotger-Igusa}}]
  We have
  $$
    \QQ_D=\bigcup_{\mu}\fX_\mu,
  $$
  where $\mu$ runs through a set of representatives of the orbits of
  $\~\sD(\O)$ under the action of the Atkin-Lehner group
  $W_D$.
\end{Proposition}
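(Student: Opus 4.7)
The plan is to assemble the structural lemmas of this section, proving first the set-theoretic equality $\QQ_D=\bigcup_{\mu\in\sD^+(\O)}\fX_\mu$ and then reducing the indexing set to representatives of $\~\sD(\O)/W_D$ via Proposition~\ref{proposition: fX1=fX2}.

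For the inclusion $\fX_\mu\subseteq\QQ_D$, I would note that any $\mu\in\sD^+(\O)$ satisfies $\mu^2=-\nm(\mu)=-D$, so $\mu^2+D=0$ and $\mu^{-1}=-\mu/D$. Positivity of $\mu$ combined with the preceding lemma on $H_\mu$ ensures that $E_\mu$ defines a genuine principal polarization $\rho_\mu$ on each $A_z$, and a direct computation from the definition \eqref{equation: Emu 0} shows that the induced Rosati involution restricts to $\alpha\mapsto\mu^{-1}\ol\alpha\mu$ on $\phi(\O)$; concretely, the identity $\tr(\mu^{-1}(\mu\ol\alpha\mu^{-1})\beta\ol\gamma)=\tr(\mu^{-1}\beta\ol\gamma\ol\alpha)$ for all $\beta,\gamma\in\O$ characterizes the adjoint and yields the stated formula. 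Hence $(A_z,\rho_\mu)\in\QQ_D$ for every $z\in X_0^D(1)$.

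For the reverse inclusion, I would take $(A,\rho)\in\QQ_D$. Because $A$ has QM by the maximal (hence Eichler) order $\O$, the Eichler-principality argument recalled at the beginning of this section yields $A\simeq A_z=\C^2/\phi(\O)v_z$ for some $z\in\H$. The polarization $\rho$ corresponds to a Riemann form $E$ on $A_z$, and the Shimura--Lang classification produces a pure quaternion $\mu\in\B$ with $\nm(\mu)>0$ and $E=E_\mu$. Integrality of $E$ on $\Lambda_z\times\Lambda_z$ forces $\mu\in\O$, principality forces $\nm(\mu)=D$ exactly, and the hypothesis defining $\QQ_D$ (the Rosati involution is of the prescribed shape) forces $\mu^2+D=0$; thus $\mu\in\sD(\O)$. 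Finally, the positive-definiteness of $H_\mu$, which is what makes $E_\mu$ a polarization in the first place, combined with the lemma on $H_\mu$ forces the $(2,1)$-entry of $\phi(\mu)$ to be positive, so $\mu\in\sD^+(\O)$ and $(A,\rho)\in\fX_\mu$.

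To pass from $\sD^+(\O)$ to orbit representatives, I would apply Proposition~\ref{proposition: fX1=fX2} directly: for $\mu_1,\mu_2\in\sD^+(\O)$ we have $\fX_{\mu_1}=\fX_{\mu_2}$ if and only if $\mu_1$ and $\mu_2$ define the same orbit in $\~\sD(\O)$ under $W_D=N_\B^+(\O)/\Q^\times\O^1$, so restricting the union to a set of $W_D$-orbit representatives already exhausts $\QQ_D$. The only nontrivial input is the identification $E=E_\mu$ in the second paragraph, which I treat as a black box from the classical theory of Riemann forms on abelian varieties with quaternionic multiplication; everything else is bookkeeping from the lemmas of this section, and I do not foresee any serious obstacle.
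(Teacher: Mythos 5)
Your proof follows the paper's intended route: the paper offers no argument of its own for this proposition (it cites Rotger and states that the proposition merely summarizes the discussion of Section~2), and what you write assembles exactly the ingredients of that discussion together with Proposition~\ref{proposition: fX1=fX2}, so the reduction to orbit representatives and the inclusion $\fX_\mu\subseteq\QQ_D$ are fine.

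One caveat on the reverse inclusion. The classification ``every Riemann form on $A_z$ equals $E_\mu$ for a pure quaternion $\mu$ of positive norm,'' which you invoke as a black box, is stated in the paper (and in Shimura and Lang) only for \emph{simple} $A_z$, i.e.\ away from CM-points. At a CM-point there genuinely exist principal polarizations that are not of the form $\rho_\mu$ with $\mu\in\sD^+(\O)$ --- this is precisely why $\QQ_D$ is defined with the extra Rosati condition; see the Remark in the introduction and Example~\ref{example: QM not on QD}. To close the argument at such points you should use the hypothesis $(A,\rho)\in\QQ_D$ earlier than you do: the defining condition supplies $\nu\in\O$ with $\nu^2=-D$ whose involution $\alpha\mapsto\nu^{-1}\ol\alpha\nu$ equals the Rosati involution of $\rho$ on $\O$. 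Two Riemann forms on $A_z$ inducing the same Rosati involution on $\B$ differ by a totally positive symmetric element of the centralizer of $\B$ in $\End(A_z)\otimes\Q$, which is $\Q$; since both $E$ and $E_{\pm\nu}$ are principal, the scalar is $1$ and $E=E_{\pm\nu}$ with the sign fixed by positivity. With that adjustment the proof is complete.
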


\section{Quadratic forms associated to Shimura curves}
\label{section: quadratic forms}
Let all the notations, such as $D$, $\B$, $\O$, $\phi$, $\sD^+(\O)$,
$\~\sD$, $E_\mu$, $\rho_\mu$, $\fX_\mu$, and etc., be defined as in
the previous section. In order to prove Theorem \ref{theorem: main 1},
we shall introduce certain quadratic forms associated to Shimura curves.
We first review the notion of singular relations.

Let $(A,\rho)$ be a
principally polarized abelian surface over $\C$. In terms of the
Riemann form $E$ of $(A,\rho)$, the Rosati involution
$\dagger:\End(A)\to\End(A)$ is defined by $f\mapsto f^\dagger$, where
$f^\dagger$ is the endomorphism characterized by the property
$E(f\omega_1,\omega_2)=E(\omega_1,f^\dagger\omega_2)$ for all periods
$\omega_1$ and $\omega_2$ of $A$. An endomorphism $f$ of $A$ is said to be
symmetric with respect to the Rosati involution or Rosati invariant if $f^\dagger=f$.

Now suppose that the normalized period matrix of $(A,\rho)$ is
$\tau=\SM{\tau_1}{\tau_2}{\tau_2}{\tau_3}\in \sA_2$.
An endomorphism $f$ of $A$ can be represented by a matrix
$R_f\in M(2,\C)$ such that
$$
  R_f(\tau,1)=(\tau, 1)M_f=(\tau\alpha+\beta,\tau\gamma+\delta)
$$
for some $M_f=\SM\alpha\gamma\beta\delta\in M(4,\Z)$.
Then $f$ is symmetric with respect to the Rosati
involution if and only if the matrix $M_f$ satisfies
$$
  M_f^t\M{0}{-1}{1}{0}=\M{0}{-1}{1}{0}M_f,
$$
that is, if and only if $M_f$ is of the form
\begin{equation} \label{equation: Rosati matrix}
  M_f=\begin{pmatrix}a_1&a_2&0&b\\a_3&a_4&-b&0\\
  0&c&a_1&a_3\\-c&0&a_2&a_4\end{pmatrix}.
\end{equation}
Furthermore, the relation $(\tau\gamma+\delta)\tau=\tau\alpha+\beta$ is
equivalent to the relation
$$
  a_2\tau_1+(a_4-a_1)\tau_2-a_3\tau_3+b(\tau_2^2-\tau_1\tau_3)+c=0.
$$
Clearly, the relation is nontrivial if and only if $f$ is not a
multiplication-by-$n$ endomorphism.

Conversely, if the period matrix $\tau$ satisfies
\begin{equation} \label{equation: singular relation}
  a\tau_1+b\tau_2+c\tau_3+d(\tau_2^2-\tau_1\tau_3)+e=0
\end{equation}
for some integers $a,b,c,d,e\in\Z$, then the corresponding abelian
surface has an endomorphism $f$ whose matrix $M_f$ is
\begin{equation} \label{equation: Rosati matrix 2}
  \begin{pmatrix}0&a&0&d\\-c&b&-d&0\\
  0&e&0&-c\\-e&0&a&b\end{pmatrix}
\end{equation}
which is a matrix of the form \eqref{equation: Rosati matrix}.
The relation in \eqref{equation: singular relation} is called a
\emph{singular relation}. It is \emph{primitive} if
$\gcd(a,b,c,d,e)=1$. Note that the matrix in
\eqref{equation: Rosati matrix 2} satisfies
$M^2-bM+ac+de=0$. The discriminant $\Delta=\Delta(a,b,c,d,e)$ of the
singular relation in \eqref{equation: singular relation} is defined to
be $b^2-4ac-4de$, the discriminant of the quadratic polynomial
$x^2-bx+ac+de$. Using the condition that $\Im\tau$ is positive
definite, we can deduce that $\Delta>0$. In other words, if we let
$L$ be the set of singular relations satisfied by a given
$\tau\in\H_2$, then $\Delta: L\to\Z$ is a
positive definite quadratic form on $L$. In the following, we let
$\gen{\cdot,\cdot}_\Delta$ denote the bilinear form associated to this
quadratic form, i.e.,
\begin{equation} \label{equation: bilinear Delta}
  \gen{\ell_1,\ell_2}_\Delta:=\frac12\left(\Delta(\ell_1)
 +\Delta(\ell_2)-\Delta(\ell_1+\ell_2)\right).
\end{equation}

\begin{Remark} From the definition of $\Delta$, it is plain that
$\Delta(\ell)\equiv 0,1\mod 4$ for all $\ell$. Moreover, we can check
by explicit computation that
\begin{equation} \label{equation: parity}
  \gen{\ell_1,\ell_2}_\Delta^2\equiv \Delta(\ell_1)\Delta(\ell_2)\mod 4.
\end{equation}
\end{Remark}

We now consider the case of a Shimura curve $\fX_\mu$ in $\QQ_D$.

\begin{Lemma}[{\cite[Page 113]{Shimura-CM1}}]
  Let $(A_z,\rho_\mu)$ be a principally polarized abelian surface in
  $\fX_\mu$. The Rosati involution $\dagger:\End(A_z)\to\End(A_z)$
  with respect to $\rho_\mu$ coincides with
  $\alpha\mapsto\mu^{-1}\ol\alpha\mu$ on $\O$, where $\ol\alpha$ is the
  quaternionic conjugation of $\alpha$.
\end{Lemma}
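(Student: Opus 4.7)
The plan is to derive the formula $\gamma^\dagger=\mu^{-1}\ol\gamma\mu$ directly from the defining property $E_\mu(\gamma\omega_1,\omega_2)=E_\mu(\omega_1,\gamma^\dagger\omega_2)$ of the Rosati involution. For $\gamma\in\O$, the induced endomorphism of $A_z=\C^2/\Lambda_z$ is left multiplication by $\phi(\gamma)$, which carries $\phi(\alpha)v_z$ to $\phi(\gamma\alpha)v_z$. Substituting periods $\omega_i=\phi(\alpha_i)v_z$ into the Rosati identity and applying formula \eqref{equation: Emu 0}, I reduce the statement to the quaternionic identity
$$
  \tr\bigl(\mu^{-1}\gamma\alpha_1\ol{\alpha_2}\bigr)
  =\tr\bigl(\mu^{-1}\alpha_1\ol{\alpha_2}\,\ol{\gamma^\dagger}\bigr),
$$
required for all $\alpha_1,\alpha_2\in\O$.

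Next I would exploit the cyclic property of the reduced trace to rewrite the two sides as $\tr(\alpha_1\ol{\alpha_2}\cdot\mu^{-1}\gamma)$ and $\tr(\alpha_1\ol{\alpha_2}\cdot\ol{\gamma^\dagger}\mu^{-1})$, respectively. Taking $\alpha_2=1$ and letting $\alpha_1$ range over $\O$, the products $\alpha_1\ol{\alpha_2}=\alpha_1$ span $\B$ over $\Q$, so non-degeneracy of the reduced trace pairing on $\B$ forces the quaternionic identity
$$
  \mu^{-1}\gamma=\ol{\gamma^\dagger}\mu^{-1},
$$
from which $\gamma^\dagger=\ol{\mu^{-1}\gamma\mu}$.

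Finally I would simplify this using the hypotheses that $\mu$ is pure with $\mu^2=-D$. Since $\ol\mu=-\mu$, one has $\ol{\mu^{-1}\gamma\mu}=\ol\mu\,\ol\gamma\,\ol{\mu^{-1}}=\mu\ol\gamma\mu^{-1}$; and because $\mu^2$ is a central scalar, conjugation by $\mu$ coincides with conjugation by $\mu^{-1}$, yielding $\mu^{-1}\ol\gamma\mu$, as claimed. The argument is essentially bookkeeping; the only input that requires attention is non-degeneracy of the reduced trace pairing on $\B$, which is standard for a quaternion algebra, so there is no genuine obstacle.
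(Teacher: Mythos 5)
The paper does not prove this lemma; it is quoted from Shimura, so there is no in-text argument to compare against. Your direct verification is correct: the substitution of periods $\phi(\alpha_i)v_z$ into the defining relation of the Rosati involution, the use of $\tr(xy)=\tr(yx)$ and $\ol{xy}=\ol y\,\ol x$, the non-degeneracy of the reduced trace pairing, and the final simplification using $\ol\mu=-\mu$ and the centrality of $\mu^2=-D$ are all sound. One small logical point is worth tightening: to write $E_\mu(\omega_1,\gamma^\dagger\omega_2)=\tr(\mu^{-1}\alpha_1\ol{\alpha_2}\,\ol{\gamma^\dagger})$ you are implicitly assuming that $\gamma^\dagger$ is again given by left multiplication by an element of $\B$, which is automatic only when $\End^0(A_z)=\B$ (i.e.\ at non-CM points). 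This is harmless: simply run the computation in reverse, checking that the candidate $\gamma':=\mu^{-1}\ol\gamma\mu$ satisfies $E_\mu(\gamma\omega_1,\omega_2)=E_\mu(\omega_1,\gamma'\omega_2)$ for all periods, and then invoke uniqueness of the adjoint with respect to the non-degenerate form $E_\mu$ to conclude $\gamma^\dagger=\gamma'$ at every point of $X_0^D(1)$, CM or not.
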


In other words, $\alpha\in\O$ is Rosati-invariant with respect to
$\rho_\mu$ if and only if $\alpha\in\mu^\perp$, where
$$
  \mu^\perp:=\{\beta\in\O:\tr(\beta\ol\mu)=0\}.
$$
Let $L$ be the set of singular relations arising from elements in
$\mu^\perp$. Note that since $\Z\subset\mu^\perp$ yields trivial
relations, we have $L\simeq \mu^\perp/\Z$. To study properties of $L$,
we shall identify $L$ with a certain subset of $\mu^\perp$.

For $\mu\in\sD^+(\O)$, let
$$
  \gen{1,\mu}^\perp:=\{\alpha\in\O:\tr(\alpha)=~\tr(\alpha\mu)=0\}
 =\{\alpha\in\O:~\tr(\alpha)=0,~\alpha\mu+\mu\alpha=0\}
$$
be the set of elements orthogonal to $1$ and $\mu$ in $\O$, and let
\begin{equation} \label{equation: Lmu}
  L_\mu:=(\Z+2\O)\cap\gen{1,\mu}^\perp.
\end{equation}
Under the negative trace form
$\gen{\cdot,\cdot}:(\alpha,\beta)\mapsto-\tr(\alpha\ol\beta)$, $L_\mu$
is a positive definite lattice of rank $2$.

\begin{Lemma} Let $L$ and $L_\mu$ be given above. Then $L\simeq L_\mu$
  as lattices.
\end{Lemma}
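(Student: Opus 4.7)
The plan is to build an explicit bijection $L_\mu\leftrightarrow L$ via the action of Rosati-invariant elements of $\O$ on $\Lambda_z$, then to check that this bijection matches the two quadratic forms.

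First, I would fix a symplectic $\Z$-basis of $\Lambda_z$ with respect to the Riemann form $E_\mu$, starting from the natural $\O$-basis $\{\phi(\omega_i)v_z\}$ of $\Lambda_z$ and symplectifying via the Gram matrix $(\tr(\mu^{-1}\omega_i\bar\omega_j))$. This produces an explicit formula for the normalized period matrix $\tau(z)\in\H_2$ of $(A_z,\rho_\mu)$ in terms of $z$ and $\phi$.

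Second, for $\alpha\in\mu^\perp$ the endomorphism $\phi(\alpha)$ is Rosati-invariant by the previous lemma, so in the symplectic basis its matrix $M_{f_\alpha}$ automatically takes the shape~\eqref{equation: Rosati matrix}. Reading off its entries yields a singular relation $\ell_\alpha$ for $\tau(z)$. I would then show that the integrality of $M_{f_\alpha}$ --- equivalently, that $\ell_\alpha$ has integer coefficients --- is equivalent, modulo the scalar kernel $\Z\subset\mu^\perp$ (which gives only trivial multiplication-by-$n$ relations), to the condition $\alpha\in\Z+2\O$: the change of basis from the $\O$-basis to the symplectic basis has denominator $2$, so a generic element of $\O$ produces only half-integer matrix entries, and integrality is restored precisely on $\Z+2\O$. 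This accounts for the factor of $2$ in the definition of $L_\mu$. The map $\alpha\mapsto\ell_\alpha$ thus descends to a well-defined map $L_\mu\to L$, and bijectivity follows by inverting via the prescription~\eqref{equation: Rosati matrix 2}.

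Third, I would check that the bijection is an isometry. The identity $\phi(\alpha)^2-\tr(\alpha)\phi(\alpha)+\nm(\alpha)=0$ forces $M_{f_\alpha}^2-\tr(\alpha)M_{f_\alpha}+\nm(\alpha)=0$, and matching with the relation $M^2-bM+(ac+de)=0$ from~\eqref{equation: Rosati matrix 2} gives $b=\tr(\alpha)$ and $ac+de=\nm(\alpha)$. For $\alpha\in L_\mu$, which is automatically pure, this yields $b=0$ and $\Delta(\ell_\alpha)=-4\nm(\alpha)$; comparing with the negative trace form $-\tr(\alpha\bar\alpha)=-2\nm(\alpha)$ and tracking the factor of $2$ introduced by the symplectic normalization in Step~1 shows that the forms agree. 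The main obstacle is Step~2: producing the symplectic basis explicitly and pinning down that the integrality of $M_{f_\alpha}$ carves out exactly the sublattice $\Z+2\O$ of $\mu^\perp$. Once this bookkeeping is done, Steps~1 and~3 reduce to straightforward matrix manipulations.
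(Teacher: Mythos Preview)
Your Step~2 contains a genuine error. The symplectic basis for $E_\mu$ is an honest $\Z$-basis of $\O$ (since $E_\mu$ is unimodular on $\O$ when $\mu$ defines a principal polarization), so the change of basis from any $\Z$-basis of $\O$ to the symplectic one lies in $\GL(4,\Z)$ and carries no denominator $2$. Consequently, for \emph{every} $\alpha\in\O$ the matrix $M_{f_\alpha}$ is integral, and the same holds for the associated singular relation. Integrality therefore does not cut out the sublattice $\Z+2\O$ at all; your map $\alpha\mapsto\ell_\alpha$ is well-defined on all of $\mu^\perp$, and its restriction to $L_\mu$ lands in a proper sublattice of $L$ rather than onto $L$. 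You have already seen the symptom of this in Step~3: your own computation gives $\Delta(\ell_\alpha)=-4\nm(\alpha)$ while the form on $L_\mu$ is $-2\nm(\alpha)$, a factor of $2$ that cannot be absorbed into any ``symplectic normalization''---there is none.

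The paper's argument is much shorter and purely algebraic. It first observes that the quadratic form on $L\simeq\mu^\perp/\Z$ is $\gamma+\Z\mapsto\operatorname{disc}(\gamma)=\tr(\gamma)^2-4\nm(\gamma)$. It then writes down the isometry $L_\mu\to\mu^\perp/\Z$ directly: for $\alpha\in L_\mu$ one has $\nm(\alpha)\equiv 0$ or $3\pmod 4$, and one sends $\alpha$ to $\alpha/2+\Z$ in the first case and to $(1+\alpha)/2+\Z$ in the second (both lie in $\O$, hence in $\mu^\perp$). A one-line check shows that in either case $\operatorname{disc}$ of the image equals $-\nm(\alpha)$, so the map is an isometry. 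The ``factor of $2$'' you were looking for is this division by $2$, not anything coming from the symplectic basis. To repair your approach you would need to replace $\alpha\mapsto\ell_\alpha$ by $\alpha\mapsto\ell_{\alpha/2}$ (resp.\ $\ell_{(1+\alpha)/2}$), at which point the period-matrix machinery becomes unnecessary.
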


\begin{proof} Recall that if $f$ is a Rosati-invariant endomorphism,
  then it satisfies $f^2-tf+n=0$ for some integers $t,n$ and the
  discriminant of the corresponding singular relation is $t^2-4n$.
  Thus, the quadratic form on $L\simeq\mu^\perp/\Z$ is given by
  $\alpha+\Z\mapsto\operatorname{disc}(\alpha)=\tr(\alpha)^2-4\nm(\alpha)$.
 
  From the definition of $L_\mu$ it is easy to see that $\nm(\alpha)\equiv
  0,3\mod 4$. Also, we have $\alpha/2\in\O$ if
  $\nm(\alpha)\equiv 0\mod 4$ and $(1+\alpha)/2\in\O$ if
  $\nm(\alpha)\equiv 3\mod 4$. Define $L_\mu\to\mu^\perp/\Z$ by
  $$
    \alpha\longmapsto\begin{cases}
    \alpha/2+\Z, &\text{if }\nm(\alpha)\equiv 0\mod 4,\\
    (1+\alpha)/2+\Z, &\text{if }\nm(\alpha)\equiv 3\mod 4. \end{cases}
  $$
  It is straightforward to check that this is an isometry between
  the two lattices.
\end{proof}

We have the following properties of $L_\mu$.

\begin{Lemma} \label{lemma: Runge}
\begin{enumerate}
\item[(1)] With respect to the negative trace form $\gen{\cdot,\cdot}:
  (\alpha,\beta)\mapsto-\tr(\alpha\overline\beta)$, $L_\mu$ is a
  positive definite lattice of discriminant $16D$.
\item[(2)] For all $\alpha\in L_\mu\backslash\{0\}$, we have
  $$
    \JS{-D,-\nm(\alpha)}\Q\simeq\B.
  $$
\item[(3)] Let $\{\alpha',\beta'\}$ be a $\Z$-basis of $L_\mu$. Set
  $$
    \alpha=\begin{cases}
    \alpha'/2, &\text{if }\nm(\alpha)\equiv 0\mod 4, \\
    (1+\alpha')/2, &\text{if }\nm(\alpha)\equiv 3\mod 4, \end{cases}
  $$
  and similarly for $\beta$. Then
  $\O=\Z+\Z\alpha+\Z\beta+\Z\alpha\beta$. Also,
  $\alpha'\beta'-\beta'\alpha'$ is equal to $4\mu$ or $-4\mu$.
\end{enumerate}
\end{Lemma}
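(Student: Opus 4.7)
The plan is to establish (2), use it to understand the algebraic structure, prove (3) by identifying an order, and derive (1) from (3) via a norm identity on the commutator. The rank and positive-definiteness claims of (1) are straightforward consequences of the signature of the reduced trace form on $\B$.

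For (2): any nonzero $\alpha \in L_\mu$ is a pure quaternion orthogonal to $\mu$. Since both $\alpha, \mu$ are pure, $\tr(\alpha\mu) = \alpha\mu + \mu\alpha$, and the vanishing of $\tr(\alpha\mu)$ gives $\mu\alpha + \alpha\mu = 0$. Together with $\mu^2 = -D$ and $\alpha^2 = -\nm(\alpha)$, the subalgebra $\Q\langle 1, \mu, \alpha, \mu\alpha\rangle$ satisfies the defining relations of $\left(\frac{-D, -\nm(\alpha)}{\Q}\right)$, and $\Q$-linear independence of the four generators (which holds because $\mu$ and $\alpha$ anticommute and are nonzero) forces the subalgebra to have dimension $4$ and coincide with $\B$.

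For (3): let $\{\alpha', \beta'\}$ be a $\Z$-basis of $L_\mu$ (rank $2$ is automatic since $L_\mu$ has finite index in the rank-$2$ lattice $\O \cap \langle 1, \mu\rangle^\perp$). The parity computation in the proof of the preceding lemma gives $\nm(\alpha'), \nm(\beta') \equiv 0, 3 \pmod 4$, so $\alpha, \beta \in \O$ are well defined, with $\alpha' = 2\alpha - \tr(\alpha)$ and $\beta' = 2\beta - \tr(\beta)$. The commutator $[\alpha', \beta'] = \alpha'\beta' - \beta'\alpha'$ is a pure quaternion (its trace vanishes) and orthogonal to both $\alpha'$ and $\beta'$ by a cyclic-trace calculation; since $\mu$ spans the $1$-dimensional orthogonal complement of $\Q\alpha' + \Q\beta'$ inside the pure-quaternion subspace, $[\alpha', \beta'] = c\mu$ for some $c \in \Q$. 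Squarefreeness of $D$ makes $\mu$ $\Z$-primitive in $\O$, so $c \in \Z$; and the identity $[\alpha', \beta'] = 4[\alpha, \beta]$ together with primitivity further forces $4 \mid c$, i.e., $c = 4k$ with $k \in \Z$.

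To conclude $k = \pm 1$, equivalently $\O' := \Z + \Z\alpha + \Z\beta + \Z\alpha\beta = \O$: the closure of $\O'$ under multiplication (using the minimal polynomials of $\alpha, \beta$ and the formula expressing $\beta\alpha$ as a $\Z$-linear combination of $1, \alpha, \beta, \alpha\beta$) makes it a $\Z$-order in $\B$ contained in $\O$, and a Gram-matrix computation in the basis $\{1, \alpha, \beta, \alpha\beta\}$ yields $\mathrm{disc}(\O') = (4pq - pb^2 - qa^2 - s^2 + sab)^2$ with $a = \tr(\alpha), b = \tr(\beta), p = \nm(\alpha), q = \nm(\beta), s = \tr(\alpha\beta)$; the same quantity is also $\nm([\alpha, \beta])^2 = k^4 D^2$, so $[\O : \O'] = k^2$. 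I would verify $\O' = \O$ locally at each prime: at $p \nmid 2D$, $\O_p = M_2(\Z_p)$ and $\O'_p = \O_p$ holds once $\alpha, \beta$ are shown not to commute mod $p$ (equivalently $p \nmid k$, and the non-commutativity combined with non-nilpotence of $[\alpha, \beta] \bmod p$ puts $\alpha, \beta$ outside any Borel, generating all of $M_2(\F_p)$); at ramified $p \mid D$ the unique maximal order $\O_p$ is known and maximality of $\O'_p$ is verified by a $p$-adic discriminant comparison with $\mathrm{disc}(\O_p) = p^2$; the prime $p = 2$ is the most delicate, requiring explicit use of the condition $\alpha', \beta' \in \Z_2 + 2\O_2$. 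Granted $\O' = \O$, we obtain $k = \pm 1$ and $c = \pm 4$.

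Part (1) then follows: positive definiteness of $-\tr(\cdot\bar{\cdot})$ on $L_\mu$ holds because the reduced trace form on $\B$ has signature $(2, 2)$ and $1, \mu$ span a positive-definite $2$-plane (each of positive norm), making $\langle 1, \mu\rangle^\perp$ negative-definite and hence $-\tr(\cdot\bar{\cdot})$ positive-definite there. The discriminant equals $\det\SM{-2\nm(\alpha')}{\tr(\alpha'\beta')}{\tr(\alpha'\beta')}{-2\nm(\beta')} = 4\nm(\alpha')\nm(\beta') - \tr(\alpha'\beta')^2 = \nm([\alpha',\beta']) = c^2 D = 16 D$. The principal obstacle is the identification $\O' = \O$, most subtle at $p = 2$; all other steps are routine computation.
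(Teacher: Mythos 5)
Your part (2) coincides with the paper's argument, and your identity relating the Gram determinant of $L_\mu$ to $\nm(\alpha'\beta'-\beta'\alpha')$ is a correct and rather clean way to pass between (3) and (1). The trouble is the direction in which you pass. The paper proves (1) first, by an explicit prime-by-prime computation of $L_\mu\otimes_\Z\Z_p$ (trivial for $p\nmid 2D$, valuation $1$ at odd $p\mid D$, and a delicate basis-and-Gram-matrix analysis at $p=2$ giving $2$-adic valuation $4$ or $5$ according as $2\nmid D$ or $2\mid D$), and only then uses $b^2-4ac=-16D$ to show that the Gram determinant of $\{1,\alpha,\beta,\alpha\beta\}$ is $D^2$, whence $\O'=\O$ is maximal and $\alpha'\beta'-\beta'\alpha'=\pm4\mu$. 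You run the implication the other way, (3) $\Rightarrow$ (1), so the entire burden falls on proving $\O'=\O$, equivalently $k=\pm1$ in your notation $[\alpha',\beta']=4k\mu$ --- and that step is never actually carried out.

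Concretely, your discriminant computation only yields $[\O:\O']=k^2$; it gives no independent handle on $k$. The proposed local verification is circular as written: at $p\nmid 2D$ you reduce ``$\O'_p=\O_p$'' to ``$\alpha,\beta$ do not commute mod $p$'', which you yourself note is equivalent to $p\nmid k$ --- the very thing to be proved --- and no argument for it is supplied; at the ramified primes and at $p=2$ you invoke a ``$p$-adic discriminant comparison'', but the local discriminant of $\O'_p$ is $k^4D^2$, so the comparison again presupposes knowledge of $v_p(k)$. The only non-circular route is to compute the local structure of $L_\mu=(\Z+2\O)\cap\gen{1,\mu}^\perp$ at each prime dividing $2D$ (for instance exhibiting explicit local bases as the paper does), which is exactly the content of the paper's proof of (1) and is the part your proposal omits. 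Everything else --- positive definiteness via the signature $(2,2)$ of the trace form and the positivity of the plane $\gen{1,\mu}$, the membership $[\alpha',\beta']\in\Z\mu$ from primitivity of $\mu$ in $\O$ (using that $D$ is squarefree), the divisibility $4\mid c$, and the identity $4\nm(\alpha')\nm(\beta')-\tr(\alpha'\ol{\beta'})^2=\nm([\alpha',\beta'])$ --- is sound.
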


\begin{proof} Part (1) can be proved easily by local consideration.
  It can be checked that if $p$ is a prime not dividing $2D$, 
  then $p$ does not divide the discriminant of $L_\mu$.

  If $p$ is an odd prime dividing $D$, then
  $$
    \O\otimes_\Z\Z_p\simeq\Z_p+\Z_p\mu+\Z_p\epsilon+\Z_p\mu\epsilon
  $$
  for some $\epsilon\in\O$ with $\epsilon^2\in\Z_p^\times$ and
  $\mu\epsilon=-\epsilon\mu$. Then $L_\mu\otimes_\Z\Z_p$ is spanned by
  $\epsilon$ and $\mu\epsilon$. From this, we see that the $p$-adic
  valuation of the discriminant of $L_\mu$ is $1$. We now consider the
  case $p=2$.

  If $2\nmid D$, then $\O\otimes_\Z\Z_2\simeq M(2,\Z_2)$. Let
  $\SM{c_1}{c_2}{c_3}{-c_1}$ be the image of $\mu$ under this
  isomorphism. Then an element in $\Z_2+2M(2,\Z_2)$ is orthogonal to
  $1$ and $\mu$ if and only if it is of the form $\SM a{2b}{2c}{-a}$
  for some $a,b,c\in\Z_2$ and satisfies $ac_1+bc_3+cc_2=0$. If $c_1$
  is odd, we find that
  $$
    \M{-c_3}{2c_1}0{c_3}, \qquad\M{-c_2}0{2c_1}{c_2}
  $$
  form a $\Z_2$-basis for $L_\mu\otimes_\Z\Z_2$. The Gram matrix of
  $L_\mu\otimes_\Z\Z_2$ with respect to this basis is
  $$
    \M{-2c_3^2}{-2c_2c_3-4c_1^2}{-2c_2c_3-4c_1^2}{-2c_2^2}.
  $$
  Its determinant is $-16c_1^2(c_2c_3+c_1^2)=16c_1^2D$. Hence, the
  $2$-adic valuation of the discriminant of $L_\mu$ is $4$.
  Similarly, when $c_1$ is even, $c_2$ and $c_3$ must be odd and we
  find that
  $$
    \M{-c_2}0{2c_1}{c_2}, \qquad \M0{-2c_2}{2c_3}0
  $$
  form a basis for $L_\mu\otimes_\Z\Z_2$. The Gram matrix is
  $$
    \M{-2c_2^2}{4c_1c_2}{4c_1c_2}{8c_2c_3}.
  $$
  Its determinant is $16c_2^2(-c_2c_3-c_1^2)=16c_2^2D$. Again, we find
  that the $2$-adic valuation of the discriminant of $L_\mu$ is $4$.

  Now assume that $2|D$. We have
  $\B\otimes_\Q\Q_2\simeq\JS{-1,-1}{\Q_2}$ and
  $\O\otimes_\Z\Z_2\simeq\Z_2+\Z_2I+\Z_2J+\Z_2(1+I+J+IJ)/2$, where
  $I^2=J^2=-1$ and $IJ=-JI$. Let $c_1I+c_2J+c_3IJ$ be the image of
  $\mu$ under the isomorphism. Since $2|D$ but $4\nmid D$, exactly two
  of $c_1,c_2,c_3$ are odd. Without loss of generality, we assume that
  $c_1$ and $c_2$ are odd and $c_3$ is even. The $\Z_2$-module
  $\Z_2+2\O\otimes_\Z\Z_2$ has a basis $\{1,2I,2J,I+J+IJ\}$. Then an
  element in $\Z_2+2\O\otimes_\Z\Z_2$ is in $L_\mu\otimes_\Z\Z_2$ if
  and only if it is of the form $a(2I)+b(2J)+c(I+J+IJ)$ with
  $$
    ac_1+bc_2+c\frac{c_1+c_2+c_3}2=0.
  $$
  We find that
  $$
    2c_2I-2c_1J, \qquad (c_2+c_3)I-c_1J-c_1IJ
  $$
  forms a $\Z_2$-basis for $L_\mu\otimes_\Z\Z_2$. The Gram matrix
  with respect to this basis is
  $$
    \M{8(c_1^2+c_2^2)}{4c_1^2+4c_2(c_2+c_3)}
    {4c_1^2+4c_2(c_2+c_3)}{4c_1^2+2(c_2+c_3)^2},
  $$
  whose determinant is $16c_1^2(c_1^2+c_2^2+c_3^2)=16c_1^2D$. We
  conclude that the $2$-adic valuation of the discriminant of
  $L_\mu$ is $5$. This completes the proof of Part (1). We next
  consider Part (2).

  Assume that $\alpha$ is a nonzero element in $L_\mu$. Then
  $\mu$ and $\alpha$ generate the quaternion algebra $\B$. Since
  $\mu^2=-D$ and $\alpha^2=-\nm(\alpha)$, we must have
  $$
    \JS{-D,-\nm(\alpha)}\Q\simeq\B.
  $$


  For Part (3), we first show that $\Z+\Z\alpha+\Z\beta+\Z\alpha\beta$
  is an order in $\B$. Let $a=\nm(\alpha')$,
  $b=\tr(\alpha'\overline{\beta'})$, and $c=\nm(\beta')$. From Part
  (1), we know that $b^2-4ac=-16D$. We check directly
  that
  \begin{equation} \label{equation: ab+ba}
    \alpha\beta+\beta\alpha=\begin{cases}
    -b/4, &\text{if }a,c\equiv0\mod 4, \\
    \beta-b/4, &\text{if }a\equiv3\mod 4\text{ and }c\equiv0\mod 4, \\
    \alpha-b/4, &\text{if }a\equiv0\mod 4\text{ and }c\equiv 3\mod
    4,\\
    \alpha+\beta-(2+b)/4, &\text{if }a,c\equiv3\mod 4. \end{cases}
  \end{equation}
  Note that in the last case, because $16|(b^2-4ac)$, we must have
  $b\equiv 2\mod 4$. From this identity, it is easy to see that
  $\Z+\Z\alpha+\Z\beta+\Z\alpha\beta$ is an order and that its Gram
  matrix with respect to the basis $\{1,\alpha,\beta,\alpha\beta\}$ is
  $$
    \begin{pmatrix}
    2&0&0&-b/4\\0&a/2&b/4&0\\0&b/4&c/2&0\\-b/4&0&0&ac/8\end{pmatrix},
    \qquad
    \begin{pmatrix}
    2&1&0&-b/4\\1&(1+a)/2&b/4&0\\0&b/4&c/2&c/4\\-b/4&0&c/4&
    (a+1)c/8 \end{pmatrix}
  $$
  $$
    \begin{pmatrix}
    2&0&1&-b/4\\0&a/2&b/4&a/4\\1&b/4&(1+c)/2&0\\-b/4&a/4&0&
    a(c+1)/8\end{pmatrix},
  $$
  $$
    \begin{pmatrix}
    2&1&1&(2-b)/4\\1&(1+a)/2&(2+b)/4&(1+a)/4\\
    1&(2+b)/4&(1+c)/2&(1+c)/4\\(2-b)/4&(1+a)/4&(1+c)/4&(1+a)(1+c)/8
    \end{pmatrix}
  $$
  in the four cases, respectively. The determinants of the four
  matrices above are all $(b^2-4ac)^2/256=D^2$. Therefore,
  $\Z+\Z\alpha+\Z\beta+\Z\alpha\beta$ is a maximal order and equals to
  $\O$.

  For the last assertion, we check that $\alpha'\beta'-\beta'\alpha'$
  commutes with $\mu$ and hence is contained in $\Q(\mu)$. Moreover,
  using $\alpha'\beta'+\beta'\alpha'=-b$, we find
  $$
    (\alpha'\beta'-\beta'\alpha')^2=b^2-4ac=-16D.
  $$
  Therefore, $\alpha'\beta'-\beta'\alpha'=\pm 4\mu$. This completes
  the proof of the lemma.
\end{proof}

The lemma shows that to each $\mu$ in $\sD^+(\O)$, we may associate a
$\GL(2,\Z)$-equivalence class of positive definite integral quadratic
form $Q_\mu(x,y)$ of discriminant $-16D$ given by
$$
  Q_\mu(x,y)=-\nm(\alpha x+\beta y),
$$
where $\{\alpha,\beta\}$ is a $\Z$-basis of $L_\mu$, with properties
that
\begin{enumerate}
\item the discriminant of $Q_\mu$ is $-16D$, and
\item if $a$ is an integer represented by $Q_\mu$, then
$$
  a\equiv 0,1\mod 4, \qquad\JS{-D,a}\Q\simeq\B.
$$
\end{enumerate}

\begin{Notation} \label{notation: sQ}
We let $\sQ_D$ denote the set of all positive definite binary integral
quadratic forms of discriminant $-16D$ having the two properties
above. Also, if $Q$ is a binary quadratic form, then we let
$[Q]_{\GL}$ and $[Q]_\SL$ denote the $\GL(2,\Z)$-equivalence class and
the $\SL(2,\Z)$-equivalence class of $Q$, respectively.
\end{Notation}

The next lemma shows that $\fX_\mu\mapsto[Q_\mu]_\GL$ is a
well-defined map from the set of Shimura curves in $\QQ_D$ to the set
$\sQ_D/\GL(2,\Z)$.

\begin{Lemma} \label{lemma: QQ to sQ}
  Let $\mu_1$ and $\mu_2$ be two elements in $\sD^+(\O)$.
  Then $\fX_{\mu_1}=\fX_{\mu_2}$ if and only if
  $[Q_{\mu_1}]_\GL=[Q_{\mu_2}]_\GL$.
\end{Lemma}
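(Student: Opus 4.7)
The proof pivots on Proposition \ref{proposition: fX1=fX2}, which translates $\fX_{\mu_1}=\fX_{\mu_2}$ into the existence of $\omega\in N_\B^+(\O)$ and $u\in\O^\times$ with $\mu_2=\ol{u}\,\omega\mu_1\omega^{-1}\,u$. The plan is to match this data with a $\Z$-linear isometry between the lattices $L_{\mu_1}$ and $L_{\mu_2}$, which by definition is the statement $[Q_{\mu_1}]_\GL=[Q_{\mu_2}]_\GL$.

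For the forward direction, given such $\omega$ and $u$, I would define
\[
  \Phi:L_{\mu_1}\longrightarrow L_{\mu_2},\qquad \alpha\longmapsto \ol{u}\,\omega\alpha\omega^{-1}\,u,
\]
and verify that $\Phi$ is a well-defined lattice isometry. Preservation of $\Z+2\O$ is immediate from $\omega\in N_\B(\O)$ and $u\in\O^\times$; trace-zero follows from $\tr(\Phi(\alpha))=\nm(u)\tr(\alpha)=0$; the orthogonality $\tr(\Phi(\alpha)\ol{\mu_2})=0$ reduces, after cancelling $u\ol{u}=\nm(u)$ and invoking the defining relation for $\mu_2$, to the identity $\tr(\alpha\mu_1)=0$; and $\nm(\Phi(\alpha))=\nm(u)^2\nm(\alpha)=\nm(\alpha)$ gives the norm-form preservation. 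Hence $\Phi$ exhibits a $\GL(2,\Z)$-equivalence between $Q_{\mu_1}$ and $Q_{\mu_2}$.

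For the reverse direction, choose $\Z$-bases $\{\alpha_i',\beta_i'\}$ of $L_{\mu_i}$ whose Gram matrices coincide, i.e., $\nm(\alpha_1')=\nm(\alpha_2')$, $\nm(\beta_1')=\nm(\beta_2')$, and $\tr(\alpha_1'\ol{\beta_1'})=\tr(\alpha_2'\ol{\beta_2'})$. By Lemma \ref{lemma: Runge}(3), the full multiplication table of $\O$ is determined by these three numbers through formula \eqref{equation: ab+ba} together with the halving procedure. Consequently the $\Z$-linear map $\alpha_1'\mapsto\alpha_2'$, $\beta_1'\mapsto\beta_2'$ extends to a $\Q$-algebra isomorphism $\B\to\B$. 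By Skolem--Noether this isomorphism is inner conjugation by some $\omega\in\B^\times$, which normalizes $\O$ since it carries the $\Z$-basis $\{1,\alpha_1,\beta_1,\alpha_1\beta_1\}$ of $\O$ to $\{1,\alpha_2,\beta_2,\alpha_2\beta_2\}$. From $\alpha_i'\beta_i'-\beta_i'\alpha_i'=\pm 4\mu_i$ one deduces $\omega\mu_1\omega^{-1}=\pm\mu_2$.

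The main technical obstacle is reconciling signs so as to force $\omega\in N_\B^+(\O)$; by Lemma \ref{lemma: positive}, this amounts to producing the $+$ sign in $\omega\mu_1\omega^{-1}=+\mu_2$. When the orientations of the two bases can be matched to give this sign, we take $u=1$ and finish by applying Proposition \ref{proposition: fX1=fX2}. The residual case, where the basis-matching isometry is orientation-reversing and produces $\omega\mu_1\omega^{-1}=-\mu_2$ with $\nm(\omega)<0$, is precisely where the $\GL(2,\Z)$- rather than $\SL(2,\Z)$-equivalence is essential: combining $\omega$ with a Pollak conjugation by an appropriate unit $u\in\O^\times$ absorbs the sign flip and yields a modified pair $(\omega',u)$ with $\omega'\in N_\B^+(\O)$ and $\mu_2=\ol{u}\,\omega'\mu_1\omega'^{-1}\,u$. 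Carrying out this sign-reconciliation carefully is the principal technical content of the proof.
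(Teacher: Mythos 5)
Your proof is correct and follows essentially the same route as the paper's: the forward direction realizes the map $\alpha\mapsto\ol{u}\,\omega\alpha\omega^{-1}u$ supplied by Proposition \ref{proposition: fX1=fX2} as an isometry $L_{\mu_1}\to L_{\mu_2}$, and the reverse direction matches Gram-equal bases, reconstructs the multiplication table of $\O$ via Lemma \ref{lemma: Runge}(3), applies Skolem--Noether, and settles the sign in $\omega\mu_1\omega^{-1}=\pm\mu_2$ using Lemma \ref{lemma: positive}. The only point you leave implicit (as does the paper) is that absorbing the minus sign requires a unit $\epsilon\in\O^\times$ of reduced norm $-1$ (then $\omega'=\epsilon\omega$, $u=\epsilon$ works), whose existence follows from Eichler's norm theorem because $\B$ is indefinite.
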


\begin{proof} Assume that $\mu_1$ and $\mu_2$ are elements of
  $\sD^+(\O)$ such that $\fX_{\mu_1}=\fX_{\mu_2}$. By Proposition
  \ref{proposition: fX1=fX2}, we have
  $\mu_2=\overline\epsilon\omega^{-1}\mu_1\omega\epsilon$ for some
  $\omega\in N_\B^+(\O)$ and $\epsilon\in\O^\times$. We can easily check
  that the function
  $\alpha\mapsto\overline\epsilon\omega^{-1}\alpha\omega\epsilon$ defines
  an isometry from the lattice $L_{\mu_1}$ onto the lattice
  $L_{\mu_2}$. Thus, $Q_{\mu_1}$ is $\GL(2,\Z)$-equivalent to
  $Q_{\mu_2}$.

  Conversely, assume that $\mu_1$ and $\mu_2$ are elements of
  $\sD^+(\O)$ such that $Q_{\mu_1}$ and $Q_{\mu_2}$ are
  $\GL(2,\Z)$-equivalent. There exist $\Z$-bases
  $\{\alpha_1',\beta_1'\}$ and $\{\alpha_2',\beta_2'\}$ for
  $L_{\mu_1}$ and $L_{\mu_2}$, respectively, such that
  \begin{equation} \label{equation: theorem 1 1}
     \nm(x\alpha_1'+y\beta_1')=\nm(x\alpha_2'+y\beta_2')
  \end{equation}
  for all $x,y\in\Z$. Set
  $$
    \alpha_j=\begin{cases}\alpha_j'/2, &\text{if }\nm(\alpha_j')\equiv
      0\mod 4, \\
    (1+\alpha_j')/2, &\text{if }\nm(\alpha_j')\equiv3\mod
    4, \end{cases}
  $$
  and similarly for $\beta_j$. By Lemma \ref{lemma: Runge},
  $\{1,\alpha_1,\beta_1,\alpha_1\beta_1\}$ and
  $\{1,\alpha_2,\beta_2,\alpha_2\beta_2\}$ are both $\Z$-bases for
  $\O$. Then \eqref{equation: theorem 1 1} implies that the function
  $f:a_0+a_1\alpha_1+a_2\beta_1+a_3\alpha_1\beta_1\mapsto
   a_0+a_1\alpha_2+a_2\beta_2+a_3\alpha_2\beta_2$, $a_j\in\Q$, is an
   isomorphism of the quaternion algebra that leaves $\O$ invariant.
  By the Noether-Skolem theorem, there exists an element $\gamma$ in
  $N_\B(\O)$ such that $f(\alpha)=\gamma^{-1}\alpha\gamma$ for all
  $\alpha\in\B$. Then by Part (3) of Lemma \ref{lemma: Runge},
  we have $\gamma^{-1}\mu_1\gamma=f(\mu_1)=\mu_2$ or
  $\gamma^{-1}\mu_1\gamma=-\mu_2$. By Lemma \ref{lemma: positive}, in
  the first case, we have $\gamma\in N_\B^+(\O)$. In the second
  case, we have $\nm(\gamma)<0$ and we write $\gamma=\omega\epsilon$
  for some $\omega\in N_\B^+(\O)$ and $\epsilon\in\O^\times$ with
  $\nm(\epsilon)=-1$. Then
  $\mu_2=\overline\epsilon\omega^{-1}\mu_1\omega\epsilon$. By
  Proposition \ref{proposition: fX1=fX2}, this implies that
  $\fX_{\mu_1}=\fX_{\mu_2}$, and the proof of the lemma is complete.
\end{proof}

The next lemma shows that conversely each $[Q]_\GL$ in
$\sQ_D/\GL(2,\Z)$ appears as the quadratic form associated to some
Shimura curve in $\QQ_D$.

\begin{Lemma} \label{lemma: sQ to QQ}
  Let $Q$ be an element in $\sQ_D$. Then there
  exists a Shimura curve $\fX$ in $\QQ_D$ whose associated
  $\GL(2,\Z)$-equivalence class of quadratic form is $[Q]_\GL$.
\end{Lemma}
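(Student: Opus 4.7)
The plan is to reverse-engineer the construction in Lemma \ref{lemma: Runge} (3): given $Q(x,y)=ax^2+bxy+cy^2$, I will build a maximal order $\O_Q$ in an abstract quaternion algebra together with a distinguished element $\mu'$ whose associated lattice and quadratic form are exactly those coming from $Q$, and then transport $\O_Q$ to the fixed order $\O$ using the fact that all maximal orders in an indefinite quaternion algebra over $\Q$ are conjugate (Eichler's theorem).

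Working inside $B:=\JS{-D,a}\Q$, which by the third property of $\sQ_D$ is isomorphic to $\B$, I would take generators $I,J$ with $I^2=-D$, $J^2=a$, $IJ=-JI$, and set $\alpha':=J$ and $\beta':=-(b/2a)J+(2/a)IJ$. A direct computation using $b^2-4ac=-16D$ yields $\alpha'^2=a$, $\beta'^2=c$, and $\alpha'\beta'+\beta'\alpha'=-b$, and setting $\mu':=(\alpha'\beta'-\beta'\alpha')/4$ one checks $\mu'^2=-D$ and that $\mu'$ anticommutes with both $\alpha'$ and $\beta'$. I would then define $\alpha,\beta$ from $\alpha',\beta'$ as in Lemma \ref{lemma: Runge} (3), namely $\alpha=\alpha'/2$ or $(1+\alpha')/2$ according as $a\equiv 0$ or $1\pmod 4$, and similarly for $\beta$. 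The identity \eqref{equation: ab+ba}, whose case hypotheses on $b\bmod 4$ are satisfied because $b^2\equiv 4ac\pmod{16}$, shows that $\O_Q:=\Z+\Z\alpha+\Z\beta+\Z\alpha\beta$ is closed under multiplication; the Gram matrix computation of that lemma gives $\mathrm{disc}(\O_Q)=D^2$, so $\O_Q$ is a maximal order of $B$. A short case check expresses $\mu'$ as $\alpha\beta-\beta\alpha$ plus an element of $\Z+\Z\alpha+\Z\beta$, so $\mu'\in\O_Q$; moreover $\alpha',\beta'\in\Z+2\O_Q$ and they anticommute with $\mu'$, so they lie in $L_{\mu'}$, and their Gram matrix $\SM{2a}{-b}{-b}{2c}$ has determinant $16D=\mathrm{disc}(L_{\mu'})$, making $\{\alpha',\beta'\}$ a $\Z$-basis of $L_{\mu'}$. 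An easy expansion then gives $Q_{\mu'}(x,y)=-\nm(x\alpha'+y\beta')=ax^2-bxy+cy^2$, which is $\GL(2,\Z)$-equivalent to $Q$.

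Finally, I would fix an algebra isomorphism $\psi:B\to\B$ with $\psi(\O_Q)=\O$, possible by Eichler's theorem after adjusting any initial $\psi$ by an inner automorphism. Setting $\mu_0:=\psi(\mu')$, note that $\phi(\mu_0)^2=-D\cdot\mathrm{Id}$ forces the $(2,1)$-entry of $\phi(\mu_0)$ to be nonzero, so exactly one of $\pm\mu_0$ is positive with respect to $\phi$; take $\mu$ to be that one, giving $\mu\in\sD^+(\O)$. Then $L_\mu=L_{\mu_0}=\psi(L_{\mu'})$ has $\Z$-basis $\{\psi(\alpha'),\psi(\beta')\}$, and the form $Q_\mu$ equals $Q_{\mu'}$, which is $\GL(2,\Z)$-equivalent to $Q$; thus $\fX_\mu$ is the required Shimura curve. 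The main obstacle is the case-by-case bookkeeping in the Runge order construction: one must separately verify integrality, maximality, and the location of $\mu'$ inside $\O_Q$ for each residue combination of $a$ and $c$ modulo $4$, but once this is done the global step via Eichler's theorem finishes the argument quickly.
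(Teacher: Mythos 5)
Your proof is correct and follows essentially the same route as the paper's: build an explicit maximal order containing a distinguished trace-zero, norm-$D$ element $\mu'$ realizing $Q$ inside an abstract copy of $\B$ (reusing the computation of Lemma \ref{lemma: Runge}(3)), then transport everything to $\O$ via conjugacy of maximal orders and replace $\mu$ by $-\mu$ if needed for positivity. The differences are cosmetic — your presentation of the algebra negates the middle coefficient, which is harmless up to $\GL(2,\Z)$ — and you usefully make explicit two details the paper glosses over, namely the discriminant comparison showing $\{\alpha',\beta'\}$ is a full $\Z$-basis of $L_{\mu'}$ and the reason exactly one of $\pm\mu_0$ is positive.
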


\begin{proof}
  Let $ax^2+bxy+cy^2$ be a quadratic form in $\sQ_D$. Define a
  quaternion algebra $\B'=\Q+\Q I+\Q J+\Q IJ$ with the multiplication rule
  $$
    I^2=a, \quad J^2=c, \quad IJ+JI=b. 
  $$
  We check directly that $(IJ-JI)^2=b^2-4ac=-16D$ and
  $I(IJ-JI)=-(IJ-JI)I$. Thus, the quaternion algebra is isomorphic to
  $\JS{-16D,a}\Q$, which by assumption is isomorphic to $\B$.
  Set
  $$
    \alpha=\begin{cases}
    I/2, &\text{if }a\equiv 0\mod 4, \\
    (1+I)/2, &\text{if }a\equiv 1\mod 4, \end{cases} \qquad
    \beta=\begin{cases}
    J/2, &\text{if }c\equiv 0\mod 4, \\
    (1+J)/2, &\text{if }c\equiv 1\mod 4. \end{cases}
  $$
  By Lemma \ref{lemma: Runge}, $\O'=\Z+\Z\alpha+\Z\beta+\Z\alpha\beta$ is
  a maximal order in the quaternion algebra $\B'\simeq\B$. Set
  $\mu'=(IJ-JI)/4=\alpha\beta-\beta\alpha$, which is an element of
  trace $0$ and norm $D$ in $\O'$. Then $L_{\mu'}=\Z I+\Z J$ and
  $-\nm(xI+yJ)=ax^2+bxy+cy^2$. Since all maximal orders in an
  indefinite quaternion algebra over $\Q$ are isomorphic, this means
  that there exists an element $\mu$ of trace $0$ and norm $D$ in $\O$
  such that the quadratic form associated to $\fX_\mu$ (replacing
  $\mu$ by $-\mu$ if necessary, we may assume that $\mu\in\sD^+(\O)$)
  is in the class of $ax^2+bxy+cy^2$. This proves the lemma.
\end{proof}

\begin{Remark} \label{remark: Runge}
%
  Note that Theorem 10 of \cite{Runge} states that if $\SM
  abbc$ is a positive definite matrix in $M(2,\Z)$ satisfying
  $a,c\equiv 0,1\mod 4$, $4|(b^2-ac)$, $\gcd(a,b,c)=1$, and
  $(b^2-ac)/4$ is squarefree, then any pair of two primitive singular
  relations $\ell_1,\ell_2$ satisfying
  $(\gen{\ell_i,\ell_j}_\Delta)=\SM abbc$ defines the same locus in
  $\sA_2$. In Lemma \ref{lemma: primitive}, we find that when $D\equiv
  1,2\mod 4$, every quadratic form $ax^2+2bxy+cy^2$ in $\sQ_D$
  satisfies Runge's conditions and Lemma \ref{lemma: QQ to sQ}
  would follows immediately from Runge's theorem. However, when
  $D\equiv 3\mod 4$, there are quadratic forms $ax^2+2bxy+cy^2$ in
  $\sQ_D$ with $\gcd(a,b,c)=2$, which does not meet the conditions in
  Runge's theorem.
\end{Remark}

Summarizing the discussion, we have the following theorem.

\begin{Theorem} \label{theorem: correspondences}
  There are one-to-one correspondences between
  \begin{enumerate}
  \item[(1)] the set $\{\fX_\mu:\mu\in\sD^+(\O)\}$ of irreducible
    components in $\QQ_D$,
  \item[(2)] the set of orbits of $\wt\sD(\O)$ under the action of the
    Atkin-Lehner group, and
  \item[(3)] the set $\sQ_D/\GL(2,\Z)$.
  \end{enumerate}
  The correspondences between the three sets are given by
  $$
    \fX_\mu\longleftrightarrow\mu\longleftrightarrow[Q_\mu]_\GL.
  $$
  In particular, we have $r_D=|\sQ_D/\GL(2,\Z)|$.
\end{Theorem}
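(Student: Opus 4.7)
The plan is to assemble Theorem \ref{theorem: correspondences} from pieces already in place; the substantive work has been carried out in the Propositions and Lemmas of Sections \ref{section: QM} and \ref{section: quadratic forms}, so what remains is a short bookkeeping argument organized around the common middle term $\mu\in\sD^+(\O)$.

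First I would establish the bijection between (1) and (2). Proposition \ref{proposition: fX1=fX2} states that for $\mu_1,\mu_2\in\sD^+(\O)$ we have $\fX_{\mu_1}=\fX_{\mu_2}$ if and only if the classes of $\mu_1,\mu_2$ in $\~\sD(\O)$ lie in the same $W_D$-orbit. Hence the map sending a $W_D$-orbit $[\mu]$ in $\~\sD(\O)$ to the Shimura curve $\fX_\mu$ is well-defined and injective; surjectivity is built into the definition of the set in (1), together with the decomposition $\QQ_D=\bigcup_\mu\fX_\mu$ stated in the last Proposition of Section \ref{section: QM}.

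Next, for the bijection between (1) and (3), I would invoke Lemma \ref{lemma: QQ to sQ}, which supplies both well-definedness of $\fX_\mu\mapsto [Q_\mu]_\GL$ (the class $[Q_\mu]_\GL$ depends only on $\fX_\mu$, not on the chosen representative $\mu$) and its injectivity, together with Lemma \ref{lemma: sQ to QQ}, which furnishes surjectivity by constructing, for each $[Q]_\GL\in\sQ_D/\GL(2,\Z)$, an explicit $\mu\in\sD^+(\O)$ realising that class. Compatibility of the two bijections through the middle term is tautological from the way $[Q_\mu]_\GL$ is attached to $\mu$, so the triangle of correspondences $\fX_\mu\longleftrightarrow\mu\longleftrightarrow[Q_\mu]_\GL$ is consistent.

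Finally, the identity $r_D=|\sQ_D/\GL(2,\Z)|$ is immediate, since $r_D$ is by definition the cardinality of the set in (1). I do not expect any genuinely hard step at this stage; the only mild subtlety is to ensure that the choice of representative $\mu$ within a $W_D$-orbit does not affect the class $[Q_\mu]_\GL$, which is precisely the content of Lemma \ref{lemma: QQ to sQ}. The heavy lifting — establishing that $L_\mu$ is a rank-$2$ lattice of discriminant $16D$ whose values lie in $\sQ_D$, and then inverting the construction — was done earlier, so the proof here reduces to citing these results in sequence.
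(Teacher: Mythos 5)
Your proposal is correct and follows exactly the paper's own argument: the bijection between (1) and (2) is Proposition \ref{proposition: fX1=fX2}, and the bijection between (1) and (3) is supplied by Lemmas \ref{lemma: QQ to sQ} and \ref{lemma: sQ to QQ}. The additional remarks on well-definedness and on $r_D=|\sQ_D/\GL(2,\Z)|$ are accurate but do not change the route.
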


\begin{proof} The correspondence between the first set and the second
  set is the content of Proposition \ref{proposition: fX1=fX2}, while
  the correspondence between the first set and the third set is proved
  in Lemmas \ref{lemma: QQ to sQ} and \ref{lemma: sQ to QQ}.
\end{proof}

In view of this theorem, the problem of determining the number of
Shimura curves reduces to that of counting certain classes of
quadratic forms.

\section{Proof of Theorem \ref{theorem: main 1}}
\label{section: proof of Theorem 1}
In this section, we shall prove Theorem \ref{theorem: main 1}.
We first prove a lemma.

\begin{Lemma} \label{lemma: primitive}
  If $D\equiv 1,2\mod 4$, then every quadratic form in $\sQ_D$ is
  primitive. If $D\equiv 3\mod 4$, then every quadratic form in
  $\sQ_D$ is either primitive or is equal to $4Q'$ for some (primitive)
  quadratic form $Q'$ of discriminant $-D$.
\end{Lemma}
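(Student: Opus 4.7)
The plan is to analyze the content $d=\gcd(a,b,c)$ of a quadratic form $Q(x,y)=ax^2+bxy+cy^2\in\sQ_D$, exploiting the discriminant identity $b^2-4ac=-16D$ together with the two standing hypotheses: first, that $D$ is squarefree (since it is the discriminant of an indefinite quaternion algebra over $\Q$), and second, that every integer represented by $Q$ is $\equiv 0,1\pmod 4$. In particular, $a=Q(1,0)$ and $c=Q(0,1)$ both lie in $\{0,1\}\pmod 4$. From $d^2\mid 16D$ and squarefreeness of $D$, one immediately reads off that $d\in\{1,2,4\}$.

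The key step is to rule out $d=2$. If $d=2$, then $2\mid a$ and $2\mid c$, so the mod-$4$ condition on represented values forces $a\equiv 0\pmod 4$ and $c\equiv 0\pmod 4$. Then $4ac\equiv 0\pmod{16}$, and hence $b^2=4ac-16D\equiv 0\pmod{16}$, giving $4\mid b$. But this means $4\mid d$, contradicting $d=2$. Thus the content of any form in $\sQ_D$ is $1$ or $4$.

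In the remaining case $d=4$, writing $a=4a'$, $b=4b'$, $c=4c'$, we obtain $Q=4Q'$ with $Q'=a'x^2+b'xy+c'y^2$ of discriminant $(b^2-4ac)/16=-D$. For an integer binary quadratic form to have discriminant $-D$ we need $-D\equiv 0,1\pmod 4$, which combined with squarefreeness of $D$ forces $D\equiv 3\pmod 4$. Conversely, $Q'$ must be primitive, because $\gcd(a',b',c')^2$ divides $D$ and $D$ is squarefree. This proves both assertions: when $D\equiv 1,2\pmod 4$ only $d=1$ can occur, so every element of $\sQ_D$ is primitive; when $D\equiv 3\pmod 4$ the non-primitive elements are exactly those of the form $4Q'$ with $Q'$ a primitive form of discriminant $-D$. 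The main obstacle is the mod-$4$ argument upgrading $d=2$ to $d=4$; everything else follows formally from the discriminant identity and squarefreeness of $D$.
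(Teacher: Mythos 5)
Your proof is correct and follows essentially the same route as the paper's: both use the condition that every integer represented by $Q$ is $\equiv 0,1\pmod 4$ to upgrade $2\mid a,c$ to $4\mid a,c$, then deduce $4\mid b$ from $b^2=4ac-16D$, and conclude via the discriminant congruence and squarefreeness of $D$. Your write-up merely organizes the argument more explicitly around the content $d\in\{1,2,4\}$, which is a harmless elaboration.
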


\begin{proof} If $Q(x,y)=ax^2+bxy+cy^2\in\sQ_D$ is not primitive,
  then we have $Q=2Q'$ for some quadratic form
  $Q'$ of discriminant $-4D$, i.e., $2|a,b,c$. However, because every
  integer represented by $Q$ is congruent to $0$ or $1$ modulo $4$, we
  must have $4|a,c$ and hence $4|b$. If $D\equiv 1,2\mod 4$, this
  cannot happen and therefore $Q$ is primitive. If $D\equiv 3\mod 4$,
  then the argument above shows that either $Q$ is primitive or is
  equal to $4Q'$ for some (primitive) quadratic form $Q'$ of
  discriminant $-D$.
\end{proof}

Now for a discriminant $d$, we let $C_d$ denote the class group of
primitive binary quadratic forms of discriminant $d$ and
$h(d)=|C_d|$ be the class number. Assume that the prime factorization
of $D$ is $D=p_1\ldots p_k$. We define characters
$\chi_{-4}$, and $\chi_{p_j}$ of $C_{-16D}$ by
$$
  \chi_{-4}:Q\mapsto\JS{-4}a, \qquad \chi_{p_j}:Q\mapsto\JS a{p_j},
  ~\quad p_j\text{ odd prime divisor of }D,
$$
where $a$ is any positive integer represented by $Q$ with $(a,2D)=1$.
For even $D$, we define additionally $\chi_2$ by
$$
  \chi_2:Q\mapsto\JS8a.
$$
Then these characters generate the group of genus characters of
$C_{-16D}$ with a single relation that the product of these characters
is the trivial character (see \cite[Theorem 3.15]{Cox}). It is not
difficult to verify that the characters $\chi_{p_j}$ can
also be defined by
$$
  \chi_{p_j}(Q)=(-D,a)_{p_j},
$$
where $(-D,a)_{p_j}$ is the Hilbert symbol and $a$ is any positive
integer represented by $Q$ (without the restriction that $(a,2D)=1$).
For the case $D\equiv 1,2\mod 4$, this means that the set $\sQ_D$,
which we recall is defined to be the set of positive definite binary
quadratic forms of discriminant $-16D$ satisfying
\begin{equation} \label{equation: Q conditions}
  a\equiv 0,1\mod 4, \qquad \JS{-D,a}\Q\simeq\B,
\end{equation}
actually forms a single genus in $C_{-16D}$.
Likewise, in the case
$D\equiv 3\mod 4$, a similar argument and Lemma \ref{lemma: primitive}
imply that $\sQ_D$ is the disjoint union of one genus in $C_{-16D}$
and $\{4Q:~Q\in \mathcal G\}$ for some genus $\mathcal G$ in
$C_{-D}$. Now $C_{-16D}$ has $2^k$ genera, while in
the case $D\equiv 3\mod 4$, $C_{-D}$ has $2^{k-1}$ genera. From this,
we easily obtain the following formula for $|\sQ_D/\SL(2,\Z)|$.

\begin{Lemma} \label{lemma: mod SL}
Assume that the prime factorization of $D$ is $D=p_1\ldots p_k$.
Let $\mathcal G$ be the genus of $C_{-16D}$ characterized by the properties
$$
  \chi_{-4}(Q)=1, \qquad
  \chi_{p_j}(Q)=-1,
$$
for all $p_j$ and all $Q\in\mathcal G$. If $D\equiv 1,2\mod 4$, then
$$
  \sQ_D/\SL(2,\Z)=\mathcal G.
$$
In particular, we have
$$
  |\sQ_D/\SL(2,\Z)|=\frac1{2^k}h(-16D)=\frac1{2^{k-1}}h(-4D).
$$

Also, assume that $D\equiv 3\mod 4$. We let $\mathcal G'$ be the genus
of $C_{-D}$ such that 
$$
  \chi_{p_j}(Q)=-1
$$
for all $p_j$ and all $Q\in\mathcal G'$. Then
$$
  \sQ_D/\SL(2,\Z)=\mathcal G\cup\{4Q:~Q\in\mathcal G'\}.
$$
Consequently,
$$
  |\sQ_D/\SL(2,\Z)|=\frac1{2^k}h(-16D)+\frac1{2^{k-1}}h(-D)
 =\frac1{2^{k-1}}(h(-4D)+h(-D)).
$$
\end{Lemma}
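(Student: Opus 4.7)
The strategy is to identify $\sQ_D/\SL(2,\Z)$ with unions of genera in the appropriate class groups, using the defining conditions of $\sQ_D$ to specify exactly which genus characters take which values. The first step is to interpret each of the two conditions in \eqref{equation: Q conditions} as a genus-character condition on $C_{-16D}$. For the condition $a\equiv 0,1\mod 4$, note that since $16\mid\operatorname{disc}(Q)$ we may write $b=2b'$ and $Q(x,y)=ax^2+2b'xy+cy^2$; a direct mod-$4$ computation shows that, for a primitive form $Q$, every represented value lies in $\{0,1\}\mod 4$ if and only if $\chi_{-4}(Q)=1$ (one direction being immediate by choosing $a$ coprime to $2D$, since $\chi_{-4}(Q)=(-4/a)=1\iff a\equiv 1\mod 4$). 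For the condition $\JS{-D,a}\Q\simeq\B$, since $\B$ is ramified at exactly the finite primes $p_1,\ldots,p_k$ dividing $D$, the Hilbert symbol $(-D,a)_{p_j}$ must equal $-1$ for each $j$ (and equal $1$ elsewhere, automatic by the product formula). This is precisely $\chi_{p_j}(Q)=-1$ for every odd $p_j\mid D$, and when $2\mid D$ it also forces $\chi_2(Q)=-1$.

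With these identifications, the primitive forms in $\sQ_D$ constitute a single genus $\mathcal G$ of $C_{-16D}$, namely the one specified in the lemma. Since $C_{-16D}$ has $2^k$ equal-sized genera, $|\mathcal G|=h(-16D)/2^k$. For $D\equiv 1,2\mod 4$, Lemma \ref{lemma: primitive} shows every form in $\sQ_D$ is primitive, so $\sQ_D/\SL(2,\Z)=\mathcal G$. The alternative expression $h(-4D)/2^{k-1}$ follows from the conductor formula $h(-16D)=2h(-4D)$, which holds because the Kronecker symbol $(d_K/2)$ vanishes when the fundamental discriminant $d_K=-4D$ is even.

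For $D\equiv 3\mod 4$, Lemma \ref{lemma: primitive} decomposes $\sQ_D$ into its primitive part (giving $\mathcal G\subset C_{-16D}$) and the set $\{4Q':Q'\in\mathcal G'\}$ with $Q'$ primitive of discriminant $-D$. The condition $4Q'\in\sQ_D$ reduces to a genus condition on $Q'$: since $(-D,4)_p=1$ at every prime we have $(-D,4a')_p=(-D,a')_p$ for every $a'=Q'(x,y)$, so $4Q'\in\sQ_D$ iff $(-D,a')_{p_j}=-1$ for each $p_j\mid D$, which is exactly the genus $\mathcal G'$ of $C_{-D}$ as stated. Since $C_{-D}$ has $2^{k-1}$ genera, $|\mathcal G'|=h(-D)/2^{k-1}$. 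Summing gives $|\sQ_D/\SL(2,\Z)|=h(-16D)/2^k+h(-D)/2^{k-1}$, which simplifies to $(h(-4D)+h(-D))/2^{k-1}$ via the same class-number relation.

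The main technical hurdle will be the mod-$4$ (and mod-$8$ when $2\mid D$) analysis in step one, needed to match the condition on \emph{all} values represented by $Q$ (not just those coprime to $2D$) with the single character $\chi_{-4}$. Care is also needed at the prime $2$ when $2\mid D$, since there the product-formula relation among the genus characters intertwines $\chi_{-4}$ and $\chi_2$, and one must verify that the specified character values are consistent with this relation; this amounts to checking that the prescribed genus is nonempty, which follows from the existence results (Lemma \ref{lemma: sQ to QQ}) already established.
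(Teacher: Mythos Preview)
Your proposal is correct and follows essentially the same route as the paper. The paper does not give a formal proof of this lemma; the justification is the paragraph immediately preceding it, which observes that the genus characters $\chi_{p_j}$ can be rewritten as Hilbert symbols $(-D,a)_{p_j}$ and hence the defining conditions \eqref{equation: Q conditions} pick out exactly one genus of $C_{-16D}$ (together with Lemma~\ref{lemma: primitive} for the imprimitive part when $D\equiv 3\bmod 4$). Your argument is simply a more detailed version of this, including the mod-$4$ analysis for $\chi_{-4}$ and the consistency/nonemptiness check, which the paper leaves implicit (it follows directly from the fact that $k$ is even).
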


Since $\SM1001$ and $\SM{-1}001$ form a complete set of coset
representatives of $\SL(2,\Z)$ in $\GL(2,\Z)$, to obtain a formula for
$r_D=|\sQ_D/\GL(2,\Z)|$, we should study how $\SM{-1}001$ acts on $\sQ_D$.

Recall that a quadratic form
$Q(x,y)$ is said to be \emph{ambiguous} if $Q(-x,y)$ is
$\SL(2,\Z)$-equivalent to itself. If a class contains an ambiguous
form, then every form in the class is ambiguous and we say the class
is ambiguous. In fact, since $Q(-x,y)$ is the inverse of $Q(x,y)$, the
ambiguous classes in $C_d$ are precisely the classes of order $1$ or
$2$ in $C_d$. Thus, the set $A_d$ of ambiguous classes in $C_d$
forms a subgroup of $C_d$ of order $2^r$, where $2^r$ is the number
of genera of $C_d$. In terms of reduced forms, a class is ambiguous
if and only if its reduced form is $ax^2+cy^2$, $ax^2+ax+cy^2$, or
$ax^2+bxy+ay^2$.

\begin{Notation}
  For a discriminant $d$, we let $A_d$ denote the set of ambiguous
  classes in $C_d$. Also, we let
  $$
    S_{-16D}=A_{-16D}\cap \sQ_D/\SL(2,\Z)
   =\{[Q]_\SL\in A_{-16D}: Q\in\sQ_D\}.
  $$
  For $D\equiv 3\mod 4$, also let
  $$
    S_{-D}=\{[Q]_\SL\in A_{-D}: 4Q\in \sQ_D\}.
  $$
\end{Notation}

From Lemma \ref{lemma: mod SL}, we easily obtain the following lemma.

\begin{Lemma} \label{lemma: rD mid-step}
  Let $k$ denote the number of prime divisors
  of $D$. Then we for $D\equiv 1,2\mod 4$, we have
  $$
   r_D=\frac1{2^{k}}h(-4D)+\frac12|S_{-16D}|,
  $$
  and for $D\equiv 3\mod 4$, we have
  $$
   r_D=\frac1{2^k}(h(-D)+h(-4D))
   +\frac12(|S_{-D}|+|S_{-16D}|).
  $$
\end{Lemma}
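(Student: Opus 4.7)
The plan is to pass from $\SL(2,\Z)$-equivalence to $\GL(2,\Z)$-equivalence by analyzing the action of the coset representative $\sigma = \SM{-1}001$ on $\sQ_D/\SL(2,\Z)$. By Theorem \ref{theorem: correspondences}, $r_D = |\sQ_D/\GL(2,\Z)|$, so we need to count orbits of the order-two group $\langle\sigma\rangle$ acting on $\sQ_D/\SL(2,\Z)$.

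First I would observe that $\sigma$ preserves $\sQ_D$: the set $\sQ_D$ is defined via conditions on the integers represented by $Q$, which are obviously invariant under any $\GL(2,\Z)$-change of variables. Second, a class $[Q]_\SL$ is fixed by $\sigma$ precisely when $Q$ is ambiguous, that is, $[Q(-x,y)]_\SL = [Q(x,y)]_\SL$. Consequently, letting $N = |\sQ_D/\SL(2,\Z)|$ and $A$ denote the number of ambiguous classes in $\sQ_D/\SL(2,\Z)$, the standard orbit count gives
$$
  r_D = A + \frac{N-A}{2} = \frac{N+A}{2}.
$$

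For $D \equiv 1,2 \mod 4$, Lemma \ref{lemma: mod SL} yields $N = 2^{1-k} h(-4D)$, while by definition $A = |S_{-16D}|$, and the first formula follows on dividing by $2$. For $D \equiv 3 \mod 4$, the decomposition $\sQ_D/\SL(2,\Z) = \mathcal G \sqcup \{4Q : Q \in \mathcal G'\}$ is respected by $\sigma$ (multiplication by $4$ commutes with the $\GL(2,\Z)$-action, and $4Q$ is ambiguous if and only if $Q$ is). Hence the ambiguous classes split as $A = |S_{-16D}| + |S_{-D}|$, and $N = 2^{1-k}(h(-4D)+h(-D))$; plugging in and halving gives the second formula.

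There is no serious obstacle; the only point to keep in mind is the easy verification that $\sigma$ preserves the partition in the $D \equiv 3 \mod 4$ case, which follows because the two parts are distinguished by imprimitivity (members of the second part have content $2$), a property preserved under any $\GL(2,\Z)$-equivalence. All remaining work is bookkeeping with the values of $N$ supplied by Lemma \ref{lemma: mod SL} and the definitions of $S_{-16D}$ and $S_{-D}$.
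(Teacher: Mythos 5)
Your proof is correct and is exactly the argument the paper intends: the discussion immediately preceding the lemma (coset representatives of $\SL(2,\Z)$ in $\GL(2,\Z)$, and the identification of classes fixed by $x\mapsto -x$ with ambiguous classes) sets up precisely your orbit count $r_D=(N+A)/2$, and the paper then states the lemma as "easily obtained" from Lemma \ref{lemma: mod SL}. Your added care about $\sigma$ preserving the partition in the $D\equiv 3\bmod 4$ case via the content of the form is a correct and worthwhile detail.
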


As the final preparation for our proof of Theorem \ref{theorem: main
  1}, we shall prove the following proposition, which relates the
notion of twisting elements and that of ambiguous classes.

\begin{Proposition} \label{proposition: ambiguous twisting}
  Let $\mu$ be an element in $\sD^+(\O)$. Then the
  quadratic form $Q_\mu$ is ambiguous if and only if $(\O,\mu)$ admits
  a twisting, that is, if and only if there exists an element $\chi$
  in $\O$ and a positive divisor $m$ of $D$ such that $\chi^2=m$ and
  $\mu\chi=-\chi\mu$.

  Moreover, if $Q_\mu$ is ambiguous and $m$ is as above, then $Q_\mu$
  primitively represents $m$ or $4m$ and also $D/m$ or $4D/m$.
\end{Proposition}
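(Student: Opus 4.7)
The plan is to interpret both sides of the equivalence as statements about the lattice $L_\mu$ with its positive definite quadratic form $-\nm$ (whose $\GL(2,\Z)$-class is $[Q_\mu]_\GL$), and to use the basic fact that $Q_\mu$ is ambiguous if and only if $L_\mu$ admits an isometry of determinant $-1$.

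For the direction ``twisting $\Rightarrow$ ambiguous,'' suppose $\chi\in\O$ satisfies $\chi^2=m$ for some positive divisor $m$ of $D$ and $\mu\chi=-\chi\mu$. Then $\chi$ is trace zero with $\nm(\chi)=-m$, and a local argument (trivial at $p\nmid D$ since $\chi$ is a $p$-adic unit, and trivial at $p\mid D$ since $\O\otimes\Z_p$ is the unique maximal order in the local division algebra $\B\otimes\Q_p$) shows $\chi\in N_\B(\O)$. I will then use $\{\chi,\chi\mu\}$ as a $\Q$-basis of $\gen{1,\mu}^\perp\otimes\Q$ and compute directly that conjugation by $\chi$ fixes $\chi$ and sends $\chi\mu\mapsto -\chi\mu$, hence has determinant $-1$. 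Because conjugation by $\chi$ preserves $\O$, preserves trace, and preserves the condition of anticommuting with $\mu$, it preserves the sublattice $\Z+2\O$ and hence restricts to an orientation-reversing isometry of $L_\mu$, making $Q_\mu$ ambiguous.

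For the direction ``ambiguous $\Rightarrow$ twisting,'' I will place $Q_\mu$ into an ambiguous reduced form, which classically is one of (i) $ax^2+cy^2$, (ii) $ax^2+axy+cy^2$, or (iii) $ax^2+bxy+ay^2$, and take a $\Z$-basis $\{\alpha,\beta\}$ of $L_\mu$ realizing this reduced form. In type (i), the condition $b=0$ means $\alpha$ and $\beta$ anticommute, so Lemma \ref{lemma: Runge}(3) forces $\alpha\beta=\pm 2\mu$ and $ac=4D$; since $a\equiv 0,1\bmod 4$ by the defining property of $\sQ_D$, I set $\chi:=\alpha$ when $a\equiv 1\bmod 4$ (so $a\mid D$) and $\chi:=\alpha/2$ when $a\equiv 0\bmod 4$ (so $a/4\mid D$, and $\alpha/2\in\O$ by a short parity computation using $L_\mu\subset\Z+2\O$). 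Types (ii) and (iii) are handled analogously using auxiliary vectors produced by the ambiguous reflection: in (iii) the pair $\{\alpha+\beta,\alpha-\beta\}$ anticommutes with $(\alpha+\beta)(\alpha-\beta)=\mp 4\mu$ (again by Lemma \ref{lemma: Runge}(3)) and squares $2a\pm b$ whose product is $16D$, from which $\chi=(\alpha+\beta)/2\in\O$ gives the twisting; the treatment of (ii) is similar after observing that the primitive representation $Q_\mu(1,-2)=4c-a$ forces $a\equiv 0\bmod 4$, so that again $\chi=\alpha/2\in\O$ with $\chi^2=a/4\mid D$.

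The moreover part follows from these same constructions: in each case the element $\chi$ satisfies either $\chi\in L_\mu$ (hence $Q_\mu$ primitively represents $\chi^2=m$) or $2\chi\in L_\mu$ primitively (hence $Q_\mu$ primitively represents $4m$); the partner vector arising from the ambiguous reduced form ($\beta$ or $\beta/2$ in type (i), and analogous vectors in types (ii) and (iii)) plays the same role for $D/m$, using that the two partners $\chi,\chi'$ in the construction satisfy $\chi\chi'=\pm 2\mu$ and $\chi^2\cdot(\chi')^2$ is the appropriate power of $4$ times $D$. The main obstacle will be the parity bookkeeping in the forward direction: across the three reduced-form types, carefully tracking when the candidate twisting element lies in $\O$ versus $\tfrac12\O$, and verifying in each case that $\chi^2$ is a divisor of $D$ rather than merely of $4D$.
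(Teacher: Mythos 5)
Your proposal is correct in substance, and its two halves relate differently to the paper's proof. For ``twisting $\Rightarrow$ ambiguous'' you construct an explicit orientation-reversing isometry of $L_\mu$: conjugation by $\chi$ fixes $\chi$, negates $\chi\mu$, sends $\mu\mapsto-\mu$ and normalizes $\O$ (your local argument for $\chi\in N_\B(\O)$ is fine), hence restricts to a determinant $-1$ automorph of $L_\mu$. This is a genuinely different and more conceptual route than the paper's, which instead completes the primitive representation of $m$ or $4m$ by $Q_\mu$ to a basis and shows the cross term $b$ satisfies $4m\mid b$ (resp.\ $2m\mid b$), pinning $Q_\mu$ down to one of three explicit ambiguous forms. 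The paper's route buys the ``moreover'' clause for free, since those explicit forms visibly represent $m$ or $4m$ and $D/m$ or $4D/m$; your isometry gives ambiguity but no representation data. For the converse, your reduction to the three ambiguous reduced types and extraction of $\chi$ as $\alpha$, $\alpha/2$ or $(\alpha\pm\beta)/2$ is essentially the paper's argument (the paper shortcuts type (iii) to type (ii) via $(x,y)\mapsto(x,x+y)$ rather than treating it directly); the parity checks you flag all go through, using that $\alpha/2\in\O$ exactly when $-\nm(\alpha)\equiv0\pmod4$.

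The one step you should tighten is the ``moreover'' clause for an \emph{arbitrary} twisting divisor $m$. The representation of $m$ or $4m$ does follow directly from $\chi\in\gen{1,\mu}^\perp$, with primitivity coming from squarefreeness of $D$. But your ``partner vector arising from the ambiguous reduced form'' only produces $D/m_0$ for the particular $m_0$ read off that reduced form, and you have not shown that the given $m$ equals $m_0$ or $D/m_0$. You can close this either by the paper's divisibility argument (for the form $4mx^2+bxy+cy^2$ one has $b^2=16(mc-D)$, so $4\mid b$ and then $m\mid (b/4)^2$ with $m$ squarefree forces $4m\mid b$, which determines the form), or intrinsically by checking that $\delta=\chi\mu/m$ lies in $\O$ (a local computation at the ramified primes dividing $m$) and satisfies $\delta^2=D/m$ and $\mu\delta=-\delta\mu$, so that $\delta$ or $2\delta$ is a primitive vector of $L_\mu$ of norm $D/m$ or $4D/m$.
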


\begin{proof} Assume that there exists an element $\chi$ in $\O$ and a
  positive divisor $m$ of $D$ such that $\chi^2=m$ and
  $\mu\chi=-\chi\mu$. Such an element $\chi$ is in
  $\gen{1,\mu}^\perp$. Thus, the quadratic form $Q_\mu$ primitively
  represents
  $$
    \begin{cases}
    4m, &\text{if }m\equiv2,3\mod 4, \\
    m\text{ or }4m, &\text{if }m\equiv 1\mod 4. \end{cases}
  $$
  In other words, the quadratic form $Q_\mu$ is $\GL(2,\Z)$-equivalent
  to $4mx^2+bxy+cy^2$ or $mx^2+bxy+cy^2$ for some $b,c\in\Z$. It is
  easy to see that $(4m)|b$ in the first case and $(2m)|b$ in the
  second case. It follows that $Q_\mu$ is equivalent to $4mx^2+(D/m)y^2$
  or $4mx^2+4mxy+(D/m+m)y^2$ in the first case and equivalent to
  $mx^2+(4D/m)y^2$ in the second case. In either case, we
  find that $Q_\mu$ is an ambiguous quadratic form primitively
  representing $m$ or $4m$ and also $D/m$ or $4D/m$. (For the case
  $Q_\mu(x,y)=4mx^2+4mxy+(D/m+m)y^2$, a solution of $Q_\mu(x,y)=4D/m$
  is $(x,y)=(1,-2)$.)

  Conversely, assume that $Q_\mu$ is an ambiguous quadratic form,
  i.e., it is equivalent to $ax^2+cy^2$, $ax^2+axy+cy^2$, or
  $ax^2+bxy+ay^2$. Note that
  $ax^2+bx(x+y)+a(x+y)^2=(2a+b)x^2+(2a+b)xy+ay^2$. Therefore, we may
  assume that $Q_\mu$ is of the form $ax^2+cy^2$ or $ax^2+axy+cy^2$.
  In the first case, we find that $a|4D$. In the second case, we have
  $a(a-4c)=-16D$ and it is easy to see that we also have $a|4D$.
  In other words, the lattice $L_\mu=(\Z+2\O)\cap\gen{1,\mu}^\perp$
  contains an element $\alpha$ such that $\alpha^2=a$ with $a|4D$.
  If $a$ is odd, then we must have $a|D$ and hence $(\O,\mu)$ admits a
  twisting with $\chi=\alpha$ and $m=a$. If $4|a$, then we must have
  $\alpha\in2\O$ and hence $(\O,\mu)$ admits a twisting with
  $\chi=\alpha/2$ and $m=a/4$. This completes the proof of the
  proposition.
\end{proof}

Finally, let us prove Theorem \ref{theorem: main 1}.

\begin{proof}[Proof of Theorem \ref{theorem: main 1}]
  According to Theorem \ref{theorem: correspondences} and the
  proof of Proposition \ref{proposition: ambiguous twisting}, for each
  ambiguous class $[Q]$ in $\sQ_D$, there exists an element
  $\mu\in\sD^+(\O)$ and a positive divisor $m$ of $D$ with
  $\JS{-D,m}\Q\simeq\B$ such that $[Q]=[Q_\mu]$ and $Q_\mu$
  primitively represents
  $$
    \begin{cases}
    4m, &\text{if }m\equiv 2,3\mod 4, \\
    m\text{ or }4m, &\text{if }m\equiv 1\mod 4. \end{cases}
  $$
  Thus, to prove the theorem, we need to count for each positive
  divisor $m$ of $D$ satisfying $\JS{-D,m}\Q\simeq\B$, how many
  ambiguous classes in $\sQ_D$ primitively representing $4m$ or $m$
  (for the case $m\equiv 1\mod 4$) there are.

  Consider the case $2|D$ first. Let $m$ be a positive divisor
  of $D$ satisfying $\JS{-D,m}\Q\simeq\B$. Assume that $m\equiv
  1\mod 4$ first. If a primitive ambiguous quadratic form $Q$ of
  discriminant $-16D$ primitively represents $m$, then it must be
  equivalent to $mx^2+(4D/m)y^2$ (see the proof of Proposition
  \ref{proposition: ambiguous twisting}), which is clearly in
  $S_{-16D}$. If $Q$ primitively represents $4m$, then it is
  equivalent to $4mx^2+(D/m)y^2$ or $4mx^2+4mxy+(D/m+m)y^2$, but
  neither of them satisfies the first condition in \eqref{equation: Q
    conditions}. For the case $m\equiv 2\mod 4$, if $Q$ primitively
  representing $4m$, 
  then it is equivalent to $4mx^2+(D/m)y^2$ or
  $4mx^2+4mxy+(D/m+m)y^2$, exactly one of which is contained in
  $S_{-16D}$ since exactly one of $D/m$ and $D/m+m$ is congruent to
  $1$ modulo $4$. For the case $m\equiv 3\mod 4$, again, if $Q$
  primitively representing $4m$, then it is equivalent to
  $4mx^2+(D/m)y^2$ or $4mx^2+4mxy+(D/m+m)y^2$. The first one cannot be
  in $S_{-16D}$ since $D/m\equiv 2\mod 4$. The second one is in
  $S_{-16D}$. In summary, we find that for each positive 
  divisor $m$ of $D$ satisfying $\JS{-D,m}\Q\simeq\B$, there exists
  exactly one element $[Q_m]$ in $S_{-16D}$ primitively representing
  $4m$ or $m$. Now observe that the correspondence $m\to[Q_m]$ is
  $2$-to-$1$ as $[Q_m]=[Q_{D/m}]$, by Proposition \ref{proposition:
  ambiguous twisting}, and no other divisors $m'$ of $D$ will satisfy
  $[Q_{m'}]=[Q_m]$ (this can be explained in a purely algebraic term,
  but can also be seen from Propositions \ref{proposition: fX1=fX2} and
  \ref{proposition: Rotger Wmu}). Thus, 
  $$
    |S_{-16D}|=\frac12\#\left\{m|D:\JS{-D,m}\Q\simeq\B\right\},
  $$
  and hence by Lemma \ref{lemma: rD mid-step},
  $$
   r_D=\frac1{2^k}h(-4D)+\frac14\#\left\{m|D:\JS{-D,m}\Q\simeq\B\right\}.
  $$
  This proves the theorem for the case $2|D$.

  For the case $D\equiv 1\mod 4$, in order for a divisor $m$ of $D$ to
  satisfy $\JS{-D,m}\Q\simeq\B$, $m$ must be congruent to $1$ modulo
  $4$. For such an $m$, there are two elements $[mx^2+(4D/m)y^2]$ and
  $[4mx^2+(D/m)y^2]$ in $S_{-16D}$. For the case $D\equiv 3\mod 4$, if
  $m\equiv 1\mod 4$, then $[4mx^2+4mxy+(D/m+m)y^2]\in S_{-D}$ and
  $[mx^2+(4D/m)y^2]\in S_{-16D}$, and if $m\equiv 3\mod 4$, then
  $[4mx^2+4mxy+(D/m+m)y^2]\in S_{-D}$ and $[4mx^2+(D/m)y^2]\in
  S_{-16D}$. Thus, for the case of odd $D$, each divisor $m|D$
  satisfying $\JS{-D,m}\Q\simeq\B$ yields two elements $[Q_m]$ and
  $[Q_m']$ in $S_{-16D}$ or $S_{-D}$. Again, because
  $\{[Q_{D/m}],[Q_{D/m}']\}=\{[Q_m],[Q_m']\}$ and no other divisors
  of $D$ has this property, we find that, by Lemma \ref{lemma: rD
    mid-step},
  $$
    r_D=\frac1{2^k}h(-4D)+\frac12\#\left\{m|D:\JS{-D,m}\Q\simeq\B\right\},
  $$
  and the proof of the theorem is complete.
\end{proof}

\section{Modular parameterization of Shimura curves on $\sA_2$}
\label{section: method}
In this section, we will describe our method for finding modular
parameterizations of Shimura curves $\fX$ on $\sA_2$ and then give
some examples.

Let us first state our objective. Let all notations $D$, $\B$, $\O$,
$\phi$, $\sD^+(\O)$, $\mu$, $E_\mu$, $\rho_\mu$, $\fX_\mu$, etc. be
the same as before. Let $\{\alpha_1,\ldots,\alpha_4\}$ be a symplectic basis of
$\O$ with respect to $\mu$. Observe that if $\gamma\in\O^1$, then we
have
$$
  E_\mu(\phi(\alpha\gamma)v_z,\phi(\beta\gamma)v_z)=\tr(\mu^{-1}\alpha\gamma
  \ol\gamma\ol\beta)=\tr(\mu^{-1}\alpha\ol\beta)=E_\mu(\phi(\alpha)
  v_z,\phi(\beta)v_z).
$$
Thus, if we let $M_\gamma$ be the matrix in $M(4,\Z)$ such that
$$
  (\alpha_1\gamma,\ldots,\alpha_4\gamma)=(\alpha_1,\ldots,\alpha_4)M_\gamma,
$$
then $\gamma\mapsto M_\gamma^t$ defines an embedding of $\O^1$ into
$\Sp(4,\Z)$.

Now for $z\in\H_1$, set $\tau_1=(\phi(\alpha_1)v_z,\phi(\alpha_2)v_z)$,
$\tau_2=(\phi(\alpha_3)v_z,\phi(\alpha_4)v_z)$, and let
$\tau_z=\tau_2^{-1}\tau_1$ be a normalized period matrix for
$(A_z,\rho_\mu)$. Given a Siegel modular form $f$ of weight 
$k$ on $\Sp(4,\Z)$, define $\wt f:\H_1\to\C$ by
$$
  \wt f(z)=f(\tau_z).
$$
For $\gamma\in\O^1$, write $\phi(\gamma)=\SM abcd$ and $M_\gamma^t=\SM
ABCD$. We may check that $\det(C\tau_z+D)=(cz+d)^2$, and hence
$$
  \wt f(\phi(\gamma)z)=f(\tau_{\phi(\gamma)z})=f(M_\gamma^t\tau_z)
 =\det(C\tau_z+D)^kf(\tau_z)=(cz+d)^{2k}\wt f(z).
$$
In other words, the restriction of a Siegel modular form of weight $k$
along $\fX_\mu$ yields a modular form of weight $2k$ on $X_0^D(1)$. In
fact, we can further show that if $w_m\in W_\mu$, then $\wt f|w_m=\wt f$.

Now recall that the Igusa invariants $J_k$, $k=2,4,6,8,10$, are
weighted projective invariants of a genus $2$ curve introduced in
\cite{Igusa-1960} to study isomorphism classes of genus $2$
curves. When we consider curves over a field of characteristic
different from $2$, the invariant $J_8$ is redundant and we only need
to consider $J_2$, $J_4$, $J_6$, and $J_{10}$. Furthermore, in the
case of genus $2$ curves $C$ over $\C$, the values of $J_k$ are equal
to the values, in the weighted projective space, of certain
meromorphic Siegel modular forms at the 
moduli point in $\sA_2$ corresponding to the Jacobian of $C$ (see
\cite{Igusa-1967}). Now if $\fX_\mu$ is a Shimura curve in $\QQ_D$,
then $(J_k^{m/2}/J_m^{k/2})|_{\fX_\mu}$ are modular functions on
$X_0^D(1)/W_\mu$. When $X_0^D(1)/W_\mu$ has genus $0$ and $j$ is a
Hauptmodul on $X_0^D(1)/W_\mu$, they are rational functions in
$j$. Our goal in this section is to find those rational
functions. Equivalently, we wish to find polynomials $p_k(j)$ such
that
$[J_2,J_4,J_6,J_{10}]|_\fX=[p_2(j),p_4(j),p_6(j),p_{10}(j)]$. Such an 
expression will enable us to determine whether a curve
of genus $2$ has quaternionic multiplication by $\O$ just by
computing its Igusa invariants and checking whether they are equal to
$[p_2(j),p_4(j),p_6(j),p_{10}(j)]$ for some $j$.
Note that $J_2$, $J_4$, and $J_6$ are meromorphic Siegel modular
forms. In practice, it is more preferable to work with holomorphic
Siegel modular forms because the degrees of the rational functions
appearing in computation will be smaller.

Recall that Igusa \cite{Igusa-1967} showed that the ring of Siegel
modular forms of even weights on $\Sp(4,\Z)$ is generated by four
modular forms $\psi_4$, $\psi_6$, $\chi_{10}$, and $\chi_{12}$, where
$\psi_4$ and $\psi_6$ are Eisenstein series, and $\chi_{10}$ and
$\chi_{12}$ are certain cusp forms. The relations between these
modular forms and $J_k$ are
\begin{equation} \label{equation: psi to J}
\begin{split}
  J_2&=-3\frac{\chi_{12}}{\chi_{10}}, \\
  J_4&=\frac1{24}(J_2^2-\psi_4), \\
  J_6&=\frac1{216}(-J_2^3+36J_2J_4+\psi_6), \\
  J_{10}&=-4\chi_{10}.
\end{split}
\end{equation}
Moreover, from our computation, it appears that the following choice
\begin{equation} \label{equation: ss}
  s_2=\psi_4, \quad s_3=\psi_6, \quad s_5=2^{12}3^5\chi_{10}, \quad
  s_6=2^{12}3^6\chi_{12},
\end{equation}
of modular forms will make the final results simpler. These will be
the modular forms we will be using.

Before we describe our method, let us mention some ideas and
observations.
\begin{enumerate}
\item In addition to Shimura curves, there are many curves on $\sA_2$
  that are isomorphic to non-compact modular curves $Y_0(N)/W$ for
  some $N$ and some subgroup $W$ of the Atkin-Lehner group. For such
  curves, it is quite easy and straightforward to find their modular
  parameterization using Fourier expansions of modular
  functions. This will be explained in more details in Section
  \ref{section: modular curves}.
\item Suppose that $\fX$ and $\fX'$ are two different Shimura curves
  (or modular curves) on $\sA_2$. The intersection points of $\fX$ and
  $\fX'$, being moduli points with endomorphism algebras bigger than a
  quaternion algebra over $\Q$, must be CM-points. In order to
  determine which CM-points lie on the intersections, we shall study
  properties of singular relations satisfied by CM-points. This is
  done in Section \ref{section: singular relations}. We will see that
  to each CM-point we may associate a positive definite ternary
  quadratic form coming from its singular relations such that it is an
  intersection point of $\fX$ and $\fX'$ if and only if the ternary
  quadratic form primitively represents the quadratic forms
  associated to both $\fX$ and $\fX'$.
\item We employ two methods to find values of modular functions at
  CM-points. The first one is the method of Borcherds forms, and the
  second one is the method of explicit Shimizu liftings. We will
  describe the two methods in Section \ref{section: evaluations}. Note
  that the method of Borcherds forms only gives us the minimal
  polynomial of the value of a modular function at a CM-point. The
  method of explicit Shimizu liftings enables us to pin down the exact
  value.
\item The divisor of $\chi_{10}$ is $2H_1$, where $H_1$ is the Humbert
  surface of discriminant $1$. Thus, the divisor of $s_5|_\fX$ is a
  sum over CM-points whose ternary quadratic forms represent $1$ and
  the numerator of $(s_5/s_2s_3)|_\fX$ is a square.
\item There is a Siegel modular form of weight $60$ on $\Sp(4,\Z)$
  with divisor $2H_4$. This modular form is given in
  \cite[Page 849]{Igusa-1967}. In terms of our $s_k$, it is
\begin{equation} \label{equation: H4}
\begin{split}
&27000s_6^2s_5^3s_3-97200s_5^4s_3^2s_2^2-37125s_6s_5^4s_2^2-71928s_6^2s_5^2s_2^4\\
&\quad-48600s_6s_5^3s_3^3-33750s_5^5s_3s_2+48600s_6^3s_5^2s_2+34992s_6s_5^2s_2^7\\
&\quad+23328s_6^3s_3^2s_2^3-46656s_5^3s_3^3s_2^3+23328s_5^3s_3s_2^6+23328s_6^4s_3^2\\
&\quad+23328s_6^4s_2^3-11664s_6^3s_3^4-11664s_6^3s_2^6+3888s_5^4s_2^5+23328s_5^3s_3^5\\
&\quad-11664s_6^5-3125s_5^6-184680s_6s_5^3s_3s_2^3+34992s_6s_5^2s_3^4s_2\\
&\quad-69984s_6s_5^2s_3^2s_2^4+46656s_6^3s_5s_3s_2^2-68040s_6^2s_5^2s_3^2s_2.
\end{split}
\end{equation}
If we plug our formulas for $[s_2,s_3,s_5,s_6]|_\fX$ into this
expression, the result should be a square (except for factors
corresponding to elliptic points). This provides a test for the
correctness of our computation.
\end{enumerate}

Now let $\fX$ be an irreducible component of genus zero in $\QQ_D$ and $j$
be a Hauptmodul of its associated Shimura curve $X$. From the degree
formula for the divisor of a modular form on $X$, we can deduce 
an upperbound $n$ for the degree of the rational function $R(x)$ such
that $R(j)=(s_m^k/s_k^m)|_\fX$. Assume that
$R(x)=(a_nx^n+\cdots+a_0)/(b_nx^n+\cdots+b_0)$. Observe that any point
where the values of $j$ and $(s_m^k/s_k^{m})|_\fX$ are known will
give rise to a linear relation among the coefficients $a_j$ and $b_j$.
If we have sufficiently many such points, there is a good chance that
their corresponding linear relations will uniquely determine the
coefficients. To find such points, we consider intersection of
$\fX$ with other Shimura curves $\fX'$ whose modular parameterizations
are already known. Here let us give a simple example
demonstrating how to determine intersection of two Shimura curves
using quadratic forms.

\begin{Example} \label{example: D=6 1}
  Let $\fX_6$ be the unique Shimura curve in $\QQ_6$.
  The quadratic form associated to $\fX_6$ is $5x^2+2xy+5y^2$. Since
  $5x^2+2xy+5y^2$ is ambiguous, $\fX_6$ is isomorphic to
  $X_0^6(1)/W_6$. Let
  $\fY_1$ be the image of the map $Y_0(1)\mapsto\sA_2$ defined by
  $z\mapsto\SM z00z$ as in Example \ref{example: YN}. The quadratic form
  associated to $\mathfrak Y$ is $x^2+4y^2$. If $\tau$ lies on the
  intersection of $\fX_6$ and $\fY_1$, then it is necessarily a
  CM-point of some discriminant $d$ and satisfies a singular
  relation of discriminant $1$, in addition to the singular relations
  satisfied by $\fX_6$. Thus, the lattice of singular relations for
  $\tau$ must have a sublattice whose Gram matrix is equivalent to
  $$
    \begin{pmatrix}5&1&u\\1&5&v\\u&v&1\end{pmatrix}
  $$
  for some integers $u$ and $v$ satisfying the conditions that $u$ and
  $v$ are odd, by \eqref{equation: parity}, and the determinant of the
  matrix, by Lemma \ref{lemma: CM Gram}, is $4r^2|d|$ for some
  integers $r$. We find that the only $(u,v)$ satisfying these
  conditions are $(u,v)=\pm(1,1)$ with $(d,r)=(-4,1)$ and
  $(u,v)=\pm(1,-1)$ with $(d,r)=(-3,1)$. By Proposition
  \ref{proposition: existence of CM} below, these are indeed the Gram
  matrices for CM-points of discriminant $-3$ and $-4$. Furthermore,
  these matrices presents the quadratic form $x^2+4y^2$. Thus, the
  CM-points of discriminant $-3$ and $-4$ lie on the intersection of
  $\fX_6$ and $\fY_1$. Note that the divisor
  of $s_5$ is $2H_1$. Thus, the divisor of $s_5|_{\fX_6}$ should be a
  sum over CM-points of $\fX_6$ whose ternary quadratic forms
  represent $1$, i.e., a sum over the CM-points of discriminant $-3$
  and $-4$. Indeed, as a modular form of weight $k$ on $X_0^6(1)/W_6$
  has $k/24$ zeros, we have
  $$
    \div s_5|_{\fX_6}=\frac13P_{-3}+\frac12P_{-4},
  $$
  where $P_{-3}$ and $P_{-4}$ denote the CM-points of discriminant
  $-3$ and $-4$, respectively.
\end{Example}

In general, assume that $\fX\in\QQ_D$ and $\fX'\in\QQ_{D'}$ are two
different Shimura curves ($D$ and $D'$ need not be different). Suppose
that the quadratic forms associated to $\fX$ and $\fX'$ are
$Q(x,y)=ax^2+2bxy+cy^2$ and $Q'(x,y)=a'x^2+2b'xy+c'y^2$, respectively.
To determine the intersection of $\fX$ and $\fX'$, we let $n$ be the
smallest integer represented by $Q'$, but not by $Q$, and search for
matrices of the form
$$
  M=\begin{pmatrix}a&b&u\\b&c&v\\u&v&n\end{pmatrix}
$$
satisfying the conditions
\begin{enumerate}
\item $u\equiv an,v\equiv cn\mod 2$ (due to \eqref{equation: parity}),
\item $M$ is positive definite (which limits $u$ and $v$ to
a finite number of possibilities),
\item $M$ represents $Q'$, and
\item $\det M=4|d|$ for some negative discriminant $d$ such that
  $\Q(\sqrt d)$ can be embedded in both $\B_D$ and $\B_{D'}$.
\end{enumerate}
Then according to Proposition \ref{proposition: existence of CM}, some
of the CM-points of discriminant $d/r^2$ will lie on the intersection
of $\fX$ and $\fX'$. Note that in practice we may need to work out
explicit optimal embeddings to see how CM-points on $X_0^D(1)$ and
$X_0^{D'}(1)$ are mapped to points on $\fX$ and $\fX'$.

We now work out some examples of modular parameterizations of Shimura
curves.

\begin{Example} \label{example: fX6}
  This example is a continuation of Example \ref{example: D=6 1}.
  Let $\fX_6$ be the unique Shimura curve of discriminant $6$ in
  $\sA_2$. Its associated quadratic form is $5x^2+2xy+5y^2$. Let
  $j_6$ be the
  Hauptmodul of $X=X_0^6(1)/W_6$ characterised by the properties that it
  takes values $0$, $\infty$ and $-16/27$ at the CM-points $P_{-4}$,
  $P_{-3}$, $P_{-24}$ of discriminants $-4$, $-3$, and $-24$,
  respectively.

  A modular form of weight $k$ on $X$ has $k/24$ zeros in $X$.
  Thus,
  $$
    \div s_2=\frac13P_{-3}, \quad
    \div s_3=\frac12P_{-4}, \quad
    \div s_5=\frac13P_{-3}+\frac12P_{-4},
  $$
  and
  $$
    \frac{s_3^2}{s_2^3}=c_1j_6, \quad
    \frac{s_5}{s_2s_3}=c_2, \quad
    \frac{s_6}{s_2^3}=c_3j_6+c_4
  $$
  for some constants $c_1,\ldots,c_4$. To determine the constants,
  we consider the intersection of $\fX$ with modular curves.

  Let $\fY_1$ be the modular curve isomorphic to $Y_0(1)$ appearing in
  Example \ref{example: D=6 1}, in which we find that $\fX$ and
  $\fY_1$ intersect at the CM-points of discriminant $-3$ and $-4$.
  Its modular parameterization is
  $$
    [s_2,s_3,s_5,s_6]|_{\fY_1}=[j^2,j^2(j-1728),0,2^{12}3^6j^4],
  $$
  computed in Example \ref{example: X0(1)} below, where $j$ is the
  elliptic $j$-function. From this, we see that the value of
  $(s_6/s_2^3)|_{\fY_1}$ at the CM-point of discriminant $-4$ is
  $$
    \frac{2^{12}3^6j^4}{j^6}\Big|_{j=1728}=1.
  $$
  Therefore, the constant $c_4$ is $1$. To determine the other
  constants, we consider the intersection of $\fX$ with other modular
  curves.

  Let $\fY_5'$ be the curve defined by the image of the map
  $Y_0(5)/w_5\to\sA_2$ defined by $z\mapsto\SM{3z}{5z}{5z}{10z}$ as
  described in Example \ref{example: YN'}. Its modular
  parameterization is given by
  $$
    [s_2,s_3,s_5,s_6]|_{\fY'}=[(j_5-12)^2,~j_5(j_5^2-36j_5-432),~-2^{10}3^5j_5,
   ~2^{10}3^5(j_5^2+12j_5+12)],
  $$
  where
  $$
    j_5=\frac{\eta(z)^{6}}{\eta(5z)^{6}}+22+125
    \frac{\eta(5z)^{6}}{\eta(z)^{6}}
  $$
  Now the quadratic form associated to $\fY'$ is
  $4x^2+5y^2$. Considering matrices of the form
  $$
    M=\begin{pmatrix}5&1&u\\1&5&v\\u&v&4\end{pmatrix}
  $$
  satisfying the conditions that
  \begin{enumerate}
  \item $u,v\equiv 0\mod 2$,
  \item $M$ is positive definite, and
  \item $\det M=4|d|$ for some negative discriminant $d$ such that
    $\Q(\sqrt d)$ can be embedded in $\B_6$ and $\JS{d}5\neq -1$,
  \end{enumerate}
  we find that $\fX$ and $\fY_5'$ intersect at the CM-points of
  discriminant $-4$, $-19$, and $-24$. The values of $j_5$ at these
  CM-points are $0$, $36$, and $-6\pm6\sqrt{-3}$, respectively. Thus,
  the values of $(s_3^2/s_2^3)|_{\fY'_5}$, $(s_5/s_2s_3)|_{\fY'_5}$,
  and $(s_6/s_2^3)|_{\fY'_5}$ at the CM-point of discriminant
  $-19$ are $81/64$, $1$, and $145/64$, respectively. Since the value
  of $j_6$ at the CM-point of discriminant $-19$ is $81/64$, we find
  that $c_1=c_2=c_3=1$. Hence along $\fX$, we have
  $$
    [s_2,s_3,s_5,s_6]|_\fX=[j_6,~j_6^2,~j_6^3,~j_6^3(j_6+1)].
  $$
  We can deduce from \eqref{equation: psi to J} and \eqref{equation:
    ss} that our result agrees with \eqref{equation: Baba}.
\end{Example}

We now give an example where the Shimura curve $\fX$ is not isomorphic
to the quotient of $X_0^D(1)$ by the full Atkin-Lehner group.

\begin{Example} \label{example: 14}
  Let $D=14$ and $\fX$ be the unique Shimura curve in
  $\QQ_{14}$. Its quadratic form is $5x^2+4xy+12y^2$. Since the
  quadratic form is not ambiguous, $\fX$ is isomorphic to
  $X=X_0^{14}(1)/w_{14}$.

  As in Examples \ref{example: D=14 order} and \ref{example: D=14
    -11}, we represent $\B_{14}$ by $\JS{-14,5}\Q$ and let $\O$ be the
  maximal order spanned by
  $$
    e_1=1, \quad e_2=\frac{1+J}2, \quad e_3=\frac{I+IJ}2, \quad
    e_4=\frac{56J+IJ}5.
  $$
  Fix an embedding $\phi:\B_{14}\hookrightarrow M(2,R)$ by
  $$
    I\longmapsto\M0{-1}{14}0, \quad J\longmapsto\M{\sqrt 5}00{-\sqrt 5},
  $$
  and choose $\mu=I$. In Example \ref{example: D=14 -11}, we find that
  $\sqrt{-11}\mapsto I+(J+IJ)/5$ and $\sqrt{-11}\mapsto 2I+3J$ define
  the two inequivalent optimal embeddings of discriminant $-11$ into
  $\O$ modulo $\O^1+w_{14}$. We let $z_{-11}$ and $z_{-11}'$ be the
  CM-points corresponding to these two optimal embeddings,
  respectively. Let $j=j_{14}$ be the Hauptmodul of $X$ that takes
  values $0$, $\infty$, and $1$ at the CM-point $z_{-8}$ of
  discriminant $-8$, the CM-point of discriminant $-4$, and $z_{-11}$,
  respectively. Since a modular form of weight $k$ on
  $X_0^{14}(1)/w_{14}$ has $k/4$ zeros, we have
  $$
    \frac{s_3^2}{s_2^3}\Big|_\fX=c_1\frac{f_3(j)^2}{f_2(j)^3}, \quad
    \frac{s_5}{s_2s_3}\Big|_\fX=c_2\frac{f_5(j)}{f_2(j)f_3(j)},
    \quad
    \frac{s_6}{s_2^3}\Big|_\fX=c_3\frac{f_6(j)}{f_2(j)^3}
  $$
  for some polynomial $f_k$, $k=2,3,5,6$, with $\deg f_k\le k$ and
  some constants $c_1$, $c_2$, and $c_3$.

  Following the proof of Proposition \ref{proposition: existence of
    CM}, we replace the quadratic form $5x^2+4xy+12y^2$ by the
  equivalent form $5x^2+224xy+2520y^2$. Considering matrices of the
  form
  $$
    \begin{pmatrix}5&112&u\\112&2520&v\\u&v&n\end{pmatrix},
  $$
  we find the following intersection information.
  $$ \extrarowheight2pt
  \begin{array}{cc|ccc|cc} \hline\hline
  n & (u,v)  & d   & \text{optimal embedding} & j=j_{14} & \text{curve}
    & j'\\ \hline
  1 & (1,20) & -4  & [1,6/5,1/5]  & \infty & \fY_1 & 1728\\
    & (1,22) & -11 & [1,1/5,1/5]  & 1 & \fY_1 & -32768 \\
    & (1,24) & -8  & [1,4/5,-1/5] & 0 & \fY_1 & 8000 \\
  4 & (0,2)  & -51 & [2,1,0] & (2+\sqrt{-3})/3 & \fY_5' &
                                                          -12-48\sqrt{-3} \\
    &        &     & [3,1,1] & (2-\sqrt{-3})/3 &        &
                                                          -12+48\sqrt{-3} \\
    & (0,4)  & -36 & [2,2,0] & 2\sqrt{-3}/3 & \fY_5' & 30+22\sqrt{-3} \\
    &        &     & [3,2,1] & -2\sqrt{-3}/3 &       & 30-22\sqrt{-3} \\
  5 & (1,18) & -43 & [3,17/5,-3/5] & -5/9 & \fX_6 & 21^4/10^6 \\
    & (1,22) & -67 & [4,27/5,2/5] & -35/9 & \fX_6 & 231^4/20^6 \\
    & (1,26) & -51 & [3,19/5,-1/5] & (-2+\sqrt{-3})/3 & \fX_6 &
                                                                -7^4/12^3
    \\
    &        &     & [4,19/5,-6/5] & (-2-\sqrt{-3})/3 &       & \\ \hline\hline
  \end{array}
  $$
  Here, for a CM-point of discriminant $d$, we let $[b_1,b_2,b_3]$
  denote the corresponding optimal embedding $\sqrt d\mapsto
  b_1I+b_2J+b_3IJ$. The values of $j$ at the CM-points are
  determined using the method of Borcherds forms and the method of
  explicit Shimizu liftings (see Example \ref{example: Shimizu 14}).
  The second to the last column indicates which Shimura curve the
  CM-point lies on. Here $\fY_1$, $\fY_5'$, and $\fX_6$
  are the modular curve or the Shimura curves in Examples
  \ref{example: X0(1)}, \ref{example: X0(5)}, and \ref{example: fX6},
  respectively. The Hauptmodul $j'$ for these 
  curves are specified as in the examples. Note that there is only one
  CM-point of discriminant $-51$ on $X_0^6(1)/W_6$. Thus, $\fX$ has a
  self-intersection at this point.

  Using modular parameterizations of $\fY_1$, $\fY_5'$, and $\fX_6$,
  it is straightforward to deduce from the above table that
  \begin{equation*}
  \begin{split}
    \frac{s_3^2}{s_2^3}\Big|_\fX
  &=\frac{49(27j^2+36j+14)}{(3j+5)^6}, \\
    \frac{s_5}{s_2s_3}\Big|_\fX
  &=\frac{243j^2(j-1)^2}{(3j+5)^2(27j^2+36j+14)}, \\
    \frac{s_6}{s_2^3}\Big|_\fX
  &=\frac{243(3j^6-6j^5+4j^4+4j^3+j^2-6j+3)}{(3j+5)^6}.
  \end{split}
  \end{equation*}
  Equivalently, we have
  \begin{equation*}
  \begin{split}
    [s_2,s_3,s_5,s_6]|_\fX
  &=[(3j+5)^2,~7(27j^2+36j+14),~243j^2(j-1)^2, \\
  & \qquad 243(3j^6-6j^5+4j^4+4j^3+j^2-6j+3)].
  \end{split}
  \end{equation*}
  Here we remark that the polynomial $f_5$ is easy to determine. Since
  $\div s_5=2H_1$, we have
  $$
    \div_Xs_5|_\fX=z_{-4}+2z_{-8}+2z_{-11}
  $$
  (note that $z_{-4}$ is an elliptic point of order $2$) and hence
  $$
    f_5(j)=j^2(j-1)^2.
  $$
  Note that if we substitute $s_k$ in \eqref{equation: H4} by the
  expressions above, we get
  $$
    3^{30}j^2(j-1)^2(j+1)^6(9j-5)^2(j^2-5)^2(3j^2+4)^2(9j^2-12j+7)^2(16j^4-13j^2+8),
  $$
  which is indeed a square, except for the factor $16j^4-13j^2+8$
  corresponding to the CM-points of discriminant $-56$, which are
  elliptic points of order $2$ on $X$.
\end{Example}

We now give an application of our modular parameterization.

\begin{Example}
  In \cite{Gonzalez-Guardia}, Gonz\'alez and Gu\`ardia provided a method
  to determine all the isomorphism classes of principal polarizations
  of the modular abelian surfaces $A_f$ with quaternionic multiplication
  attached a newform $f$ without complex multiplication. As an example
  of $A_f$ with two principal polarizations, they considered the
  newform
  $$
    f=q+\sqrt 7q^3-3q^5+4q^9-\sqrt 7q^{11}-4q^{13}-3\sqrt 7q^{15}+\cdots
  $$
  in $S_2(1568)$. They showed that the modular abelian surface $A_f$
  has quaternionic multiplication by a maximal order in $\B_{14}$ and
  has a principal polarization with which it becomes the Jacobian of
  the curve
  \begin{equation*}
  \begin{split}
   C:y^2&=\frac{1372-539i}5\Bigg(x^6+\frac{332+208i}{181}x^5
   +\frac{1173+2148i}{1267}x^4+\frac{376+8060i}{8869}x^3 \\
   &\qquad\qquad -\frac{705-1992i}{8869}x^2-\frac{1228-1612i}{62083}x
   -\frac{607-492i}{434581}\Bigg).
  \end{split}
  \end{equation*}
  The absolute Igusa invariants for $C$ are
  \begin{equation*}
  \begin{split}
    i_1(C)&=\frac{(1+i)^{14}(-7+8i)^5(28+5i)^5}{(2+i)^{12}}, \\
    i_2(C)&=\frac{(1+i)^{10}(3+10i)^2(7-8i)^3(28+5i)^3}{(2+i)^8}, \\
    i_3(C)&=\frac{(1+i)^{12}(-2+3i)(8+7i)^2(28+5i)^2(320+1383i)}{(2+i)^8}.
  \end{split}
  \end{equation*}
  To check that this curve indeed has quaternionic multiplication by a
  maximal order in $\B_{14}$, we note that in terms of $s_k$, the
  absolute Igusa invariants are
  $$
    i_1=\frac{2^{13}3^5s_6^5}{s_5^6}, \quad
    i_2=\frac{2^93^5s_2s_6^3}{s_5^4}, \quad
    i_3=\frac{2^73^4(4s_2s_6+s_3s_5)}{s_5^4}.
  $$
  By the previous example, these invariants along the unique Shimura
  curve $\fX$ in $\QQ_{14}$ are
  \begin{equation} \label{equation: Gonzalez}
  \begin{split}
    i_1|_\fX=\frac{2^{13}g(j)^5}{j^{12}(j-1)^{12}}, \quad
    i_2|_\fX=\frac{2^9g(j)^3(3j+5)^2}{j^8(j-1)^8}, \quad
    i_3|_\fX=\frac{2^7g(j)^2h(j)}{j^8(j-1)^8},
  \end{split}
  \end{equation}
  where $g(j)=3j^6-6j^5+4j^4+4j^3+j^2-6j+3$ and
  $h(j)=36j^8+48j^7-29j^6-34j^5+233j^4+120j^3-138j^2-80j+100$. Indeed,
  the values of $i_k|_\fX$ at $j=-i/2$ are equal to $i_k(C)$. We
  conclude that $C$ has quaternionic multiplication by a maximal order
  in $\B_{14}$.

  Note that the Mestre obstruction for $\fX$ is
  $\JS{-14,-3(16j^4-13j^2+8)}{\Q(j)}$ (see Appendix \ref{appendix:
    Mestre}). For the case $j=-i/2$, the Mestre obstruction is
  $\JS{-14,-147/4}{\Q(i)}\simeq M(2,\Q(i))$.
  This explains why a curve of genus $2$ over $\Q(i)$ with absolute
  Igusa invariants given by \eqref{equation: Gonzalez} exists.
\end{Example}

Finally, let us give an example of a principally polarized abelian
surface with quaternionic multiplication by a maximal order in $\B_D$,
but not in the quaternionic locus $\QQ_D$.

\begin{Example} \label{example: QM not on QD}
  Let $D=15$. There are two Shimura curves $\fX_{15}$ and $\fX_{15}'$ of
  discriminant $15$ on $\sA_2$ with quadratic forms $5x^2+12y^2$ and
  $8x^2+4xy+8y^2$, respectively.

  Using the standard recipe, we find that the CM-point of discriminant
  $-67$ of $\fX_{15}$ also lies on $\fX_6$, the unique Shimura curve
  in $\QQ_6$. The Gram matrix of its lattice of singular relations is
  $$
    \begin{pmatrix}5&0&1\\0&12&2\\1&2&5\end{pmatrix}.
  $$
  This Gram matrix does not represent the quadratic form
  $8x^2+4xy+8y^2$. Thus, the CM-point of discriminant $-67$ on
  $\fX_{15}'$ is different from that on $\fX_{15}$. However, as
  complex tori (without considering polarizations), the CM-point of
  discriminant $-67$ on $\fX_{15}$ is isomorphic to that on
  $\fX_{15}'$. This means that the CM-point of discriminant $-67$ on
  $\fX_{15}'$ has quaternionic multiplication by the maximal order in
  $\B_6$, but it does not belong to the quaternionic locus $\QQ_6$.
\end{Example}

\section{Modular curves on $\sA_2$}
\label{section: modular curves}
Let $Y_0(N)$ be the non-compact modular curve of level $N$. We can map
$Y_0(N)$ to $\sA_2$ by the same procedure as Shimura curves.

Let $\O_N=\SM\Z\Z{N\Z}\Z$ denote the standard Eichler order of level
$N$ in $M(2,\Q)$. For $z\in\H$, let 
$v_z=\left(\begin{smallmatrix}z\\1\end{smallmatrix}\right)$ and
$\Lambda_z=\O_Nv_z$. Choose  an element $\mu$ of trace $0$ and
determinant $N$ with a positive $(2,1)$-entry in $\O_N$. Then the
Riemann form $E_\mu:(\alpha v_z,\beta
v_z)\mapsto\tr(\mu^{-1}\alpha\ol\beta)$, $\alpha,\beta\in\O_N$ gives
rises to a principal polarization of $\C^2/\Lambda_z$. In this way, we
may map $Y_0(N)$ into $\sA_2$ holomorphically. When $N$ is large,
there are many inequivalent choices of $\mu$. Here we only consider
the following two types of $\mu$.
The resulting modular curves lie on the Humbert surfaces of
discriminant $1$ and $4$, respectively.

\begin{Example} \label{example: YN}
Choose $\mu=\SM0{-1}N0$. A symplectic basis with
respect to $\mu$ is
$$
  \alpha_1=\M1000, \quad \alpha_2=\M00N0, \quad \alpha_3=\M0100, \quad \alpha_4=\M0001.
$$
Then the normalized period matrix is $\SM z00{Nz}$. The lattice of
singular relations for non-CM-points is spanned by
$(0,1,0,0,0)$ and $(N,0,-1,0,0)$. Thus, the quadratic form associated
to this curve is $x^2+4Ny^2$.

Note that
$$
  \M{-1/Nz}00{-1/z}\sim_{\Sp}\M{Nz}00z\sim_\Sp\M z00{Nz}.
$$
Thus the map $Y_0(N)\to\sA_2$ defined by $z\mapsto\tau_z=\SM z00{Nz}$
factors through the Atkin-Lehner involution $w_N$.
\end{Example}

\begin{Example} \label{example: YN'}
In this example, we assume that $N$ is odd. We choose $\mu=\SM
N{-(N+1)/2}{2N}{-N}$. A symplectic basis for $\O_N$ is
$$
  \M{(N+1)/2}0N0, \quad \M N0{2N}0, \quad
  \M0100, \quad \M0001,
$$
so that the normalized period matrix is $\tau_z=\SM{(N+1)z/2}{Nz}{Nz}{2Nz}$.
The lattice of singular relations for non-CM-points is spanned by
$(0,2,-1,0,0)$ and $(N,-(N+1)/2,0,0,0)$.
Thus, the quadratic form associated to this curve is
$4x^2+2(N-1)xy+(N+1)^2y^2/4$, which is equivalent to
$$
  \begin{cases}
  4x^2+Ny^2, &\text{if }N\equiv 1\mod 4, \\
  4x^2+4xy+(N+1)y^2, &\text{if }N\equiv 3\mod 4.\end{cases}
$$
Note that if we change $z$ to $-1/Nz$ in the period matrix, we get
$$
  \M{-(N+1)/2Nz}{-1/z}{-1/z}{-2/z}\sim_\Sp
  \M{2Nz}{-Nz}{-Nz}{(N+1)z/2}\sim_\Sp\tau_z,
$$
where the first equivalence is given by the action of $\SM0{-1}10$.
Thus, the map $Y_0(N)\to\sA_2$ given by $z\mapsto\tau_z$ factors
through the Atkin-Lehner involution $w_N$.
\end{Example}

\begin{Notation} \label{notation: YN}
  We let $\fY_N$ and $\fY_N'$ denote the curves in the two examples
  above, respectively.
\end{Notation}

Now if $f$ is a Siegel modular form of weight $k$, then $f|_\fY$ or
$f|_{\fY'}$ is is a modular form of weight $2k$ on $X_0(N)/w_N$. For
the Siegel modular forms $\psi_4$, $\psi_6$, $\chi_{10}$ and
$\chi_{12}$ that generate the graded ring of Siegel 
modular forms of even weights on $\Sp(4,\Z)$, one can use Igusa's
formulas \cite[Page 848]{Igusa-1967} relating the four modular forms to
theta functions with characteristics to find $q$-expansions of the
four modular forms along $X_0(N)/w_N$. (The reader who has difficulty
understanding the formula for $\psi_6$ may find \cite{Streng}
helpful.) Then from the $q$-expansions,
it is straightforward to find the parameterization of the absolute
invariants in terms of modular functions on $X_0(N)/w_N$.

\begin{Example} \label{example: X0(1)}
  For $N=1$, computing $\psi_4$, $\psi_6$, $\chi_{10}$, $\chi_{12}$
  along $\fY_1$ using Igusa's formulas, we find
  \begin{equation*}
  \begin{split}
  \psi_4(\tau_z)&=E_4(z)^2, \\
  \psi_6(\tau_z)&=E_6(z)^2, \\
  \chi_{10}(\tau_z)&=0, \\
  \chi_{12}(\tau_z)&=\Delta(z)^2.
  \end{split}
  \end{equation*}
  Thus, we have
  $$
    [\psi_4,\psi_6,\chi_{10},\chi_{12}]
   =[j^2,j^2(j-1728),0,j^4],
  $$
  or equivalently,
  $$
    [s_2,s_3,s_5,s_6]=[j^2,j^2(j-1728),0,2^{12}3^6j^4]
  $$
  in the weighted projective space of weight $(2,3,5,6)$, where
  $j=j(z)$ is the elliptic $j$-function.
\end{Example}

\begin{Example} \label{example: X0(5)}
  Let $N=5$. Computing $\psi_4$, $\psi_6$, $\chi_{10}$, $\chi_{12}$
  along $\fY=\fY_5'$ using Igusa's formulas, we obtain
  \begin{equation*}
  \begin{split}
    \psi_4|_{\fY}&=(f_1-12f_2)^2, \\
    \psi_6|_{\fY}&=f_1(f_1^2-36f_1f_2-432f_2^2), \\
    \chi_{10}|_\fY&=-\frac14f_1f_2^4, \\
    \chi_{12}|_\fY&=\frac1{12}f_2^4(f_1^2+12f_1f_2+12f_2^2),
  \end{split}
  \end{equation*}
  where $f_1(z)=(E_2(z)-5E_2(5z))^2$, $f_2(z)=\eta(z)^6\eta(5z)^6$,
  and $E_2(z)$ is Eisenstein series of weight $2$ on $\SL(2,\Z)$.
  Thus, choosing the Hauptmodul $j_5$ for $X_0(5)/w_5$ to be
  $$
    j_5(z)=\frac{f_1(z)}{f_2(z)}=\frac{\eta(z)^6}{\eta(5z)^6}
   +22+125\frac{\eta(5z)^6}{\eta(z)^6},
  $$
  we have
  $$
    [\psi_4,\psi_6,\chi_{10},\chi_{12}]|_\fY
   =[(j_5-12)^2,~j_5(j_5^2-36j_5-432),~-j_5/4,~j_5^2/12+j_5+1],
  $$
  or equivalently,
  $$
    [s_2,s_3,s_5,s_6]|_\fY=[(j_5-12)^2,j_5(j_2^2-36j_5-432),
    -2^{10}3^5j_5^2,2^{10}3^5(j_5^2+12j_5+12)].
  $$
\end{Example}

\section{Singular relations satisfied by CM-points}
\label{section: singular relations}
In this section, we shall study singular relations satisfied by CM-points
on a Shimura curve $\fX$. All the notations such as $D$, $\B$, $\O$,
$\phi$, $\sD^+(\O)$, $E_\mu$, $\rho_\mu$, $\fX_\mu$, etc. carry the
same meaning as before.

\begin{Lemma} \label{lemma: explicit order}
  Let $Q(x,y)$ be a quadratic form in $\sQ_D$.
\begin{enumerate}
\item Assume that $Q$ is primitive. Let $p\nmid D$ be a prime
  (congruent to $1$ modulo $4$) represented by $Q$ so that
  $\JS{-D,p}\Q\simeq\B$. Let $s$ be an even integer such 
  that $s^2D+1\equiv 0\mod p$ and set $t=(s^2D+1)/p$. Then the
  $\Z$-module spanned by
  \begin{equation} \label{equation: basis for O}
    e_1=1, \quad e_2=\frac{1+J}2, \quad
    e_3=\frac{I+IJ}2, \quad e_4=\frac{sDJ+IJ}p
  \end{equation}
  is a maximal order in $\JS{-D,p}\Q$. Moreover, with the choice
  $\mu=I$, the quadratic form $Q_\mu$ is $px^2+4sDxy+4tDy^2$ (which
  is necessarily $\GL(2,\Z)$-equivalent to $Q$ since $p$ is a prime).
\item Assume that $D\equiv 3\mod 4$ and $Q=4Q'$ for some quadratic
  form $Q'$ of discriminant $-D$. Let $p$ be a prime congruent to $1$
  modulo $4$ represented by $Q'$ (so that
  $\JS{-D,p}\Q\simeq\B$). Let $s$ be an odd integer such that 
  $s^2D+1\equiv 0\mod 4p$ and set $t=(s^2+D)/4p$. Then the $\Z$-module
  spanned by
  $$
    e_1=1, \quad e_2=\frac{1+I}2, \quad e_3=J, \quad
    e_4=\frac{sDJ+IJ}{2p}
  $$
  is a maximal order in $\JS{-D,p}\Q$. Moreover, with the choice of
  $\mu=I$, the quadratic form $Q_\mu$ is $4px^2+4sDxy+4tDy^2$ (which
  is $\GL(2,\Z)$-equivalent to $Q$).
\end{enumerate}
\end{Lemma}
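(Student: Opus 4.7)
The plan is a direct construction-and-verification: exhibit the proposed $\Z$-module as an order in $\JS{-D,p}{\Q}$, verify maximality by a discriminant computation, and then read off $L_\mu$ by a straightforward linear-algebra calculation. Since $p$ is represented by $Q$ in case (1) (respectively by $Q'$ in case (2)), property (3) of the definition of $\sQ_D$ gives $\JS{-D,p}{\Q}\simeq\B$, so I would realize $\B$ using generators with $I^2=-D$, $J^2=p$, $IJ=-JI$, and take $\mu:=I$, which has trace $0$ and norm $D$. After fixing an embedding $\phi$ of $\B$ into $M(2,\R)$, one replaces $\mu$ by $-\mu$ if necessary to ensure $\mu\in\sD^+(\O)$.

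Second, I would check that $\O=\Z e_1+\cdots+\Z e_4$ is a maximal order. Closure under multiplication reduces to the products $e_2^2,e_3^2,e_4^2$ and a handful of crossed products, each of which is a short computation invoking the relevant congruence: in case (1), $p\equiv 1\bmod 4$ makes $(p-1)/4\in\Z$ and $pt=s^2D+1$ makes $e_4^2=Dt\in\Z$; in case (2) the analogous statements use $D\equiv 3\bmod 4$ and $4pt=s^2D+1$. Maximality then follows by computing the Gram matrix of the reduced trace form in the basis $\{e_1,\dots,e_4\}$ and verifying that its determinant is $D^2$, which is the discriminant criterion for a maximal order in an indefinite rational quaternion algebra; the identity $pt-s^2D=1$ is precisely what makes the determinant collapse to $D^2$ independently of $p$, $s$, $t$.

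With $\O$ maximal in hand, I would compute $L_\mu=(\Z+2\O)\cap\gen{1,\mu}^\perp$ explicitly. Since $\mu=I$ commutes with $1,I$ and anticommutes with $J,IJ$, the subspace $\gen{1,\mu}^\perp\subset\B$ equals $\Q J+\Q IJ$. Writing a generic element of $\Z+2\O$ in the basis $\{1,\,2e_2,\,2e_3,\,2e_4\}$ and imposing that the coefficients of $1$ and $I$ vanish yields a rank-two solution lattice spanned by $\{J,2e_4\}$ in case (1) and by $\{2J,2e_4\}$ in case (2). Then the standard norm formula $\nm(cJ+dIJ)=-pc^2-pDd^2$ on $\JS{-D,p}{\Q}$, expanded on a generic element of $L_\mu$ and simplified using $s^2D+1=pt$ (resp.\ $s^2D+1=4pt$), gives the claimed formula for $Q_\mu$. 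The $\GL(2,\Z)$-equivalence between $Q_\mu$ and $Q$ in case (1) (respectively between $Q_\mu/4$ and $Q'$ in case (2)) then follows from the facts that both forms have discriminant $-16D$ (resp.\ $-D$), lie in the same genus by Section \ref{section: proof of Theorem 1}, and primitively represent the common prime $p$, so they agree up to inversion.

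The main obstacle is not conceptual but organizational: keeping the two cases parallel and carrying out the maximality check efficiently without becoming entangled in the half-integral basis elements and the $p$ in the denominator of $e_4$. I would present case (1) in detail and indicate that case (2) goes through by the same formulas after the obvious substitutions.
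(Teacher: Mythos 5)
Your proposal is correct and follows essentially the same route as the paper, which simply cites Hashimoto [Theorems 2.2 and 5.1] for Part (1) and declares Part (2) ``straightforward to verify'' with exactly the basis $\{2J,\,2e_4\}$ for $L_\mu$ that you obtain; your direct verification (order axioms via the congruences on $p$, $s$ and the identity $pt=s^2D+1$, maximality via the reduced discriminant equalling $D$, then the norm computation on $\Z J+\Z(2e_4)$, resp.\ $\Z(2J)+\Z(2e_4)$) is just the content of that reference spelled out. The only caveat is cosmetic: the identity $pt-s^2D=1$ is what makes $e_4^2=Dt$ integral and the order closed under multiplication, while the collapse of the trace-form determinant to $D^2$ can also be seen directly from the index $\tfrac{1}{4p}$ of the change of basis from $\{1,I,J,IJ\}$ --- either bookkeeping is fine.
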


\begin{proof}
  Part (1) is proved in \cite[Theorems 2.2 and 5.1]{Hashimoto}.
  The proof of Part (2) is similar. It is straight to verify that the
  $\Z$-module $\O$ spanned by $e_1,\ldots,e_4$ is a maximal order and
  that $2J$ and $(sJ+IJ)/p$ form a basis for $\gen{1,\mu}^\perp\cap(\Z+2\O)$
  and hence the associated quadratic form is
  $$
    -\nm\left(2xJ+y\frac{sDJ+IJ}p\right)=4px^2+4sDxy+4tDy^2.
  $$
  This proves the lemma.
\end{proof}

\begin{Example} \label{example: D=14 order}
  Let $D=14$. There is a unique Shimura curve $\fX$ in $\QQ_{14}$. The
  associated quadratic form is $5x^2+4xy+12y^2$. The quadratic form
  clearly represents the prime $5$. The integer $s=4$ is
  an even solution of the congruence $s^2D+1\equiv 0\mod 5$. Thus, by
  Part (1) of the lemma, if we represent $\B_{14}$ by $\JS{-14,5}\Q$, then
  $$
    e_1=1, \quad e_2=\frac{1+J}2, \quad e_3=\frac{I+IJ}2, \quad e_4=\frac{56J+IJ}5
  $$
  span a maximal order in $\JS{-14,5}\Q$.
\end{Example}

\begin{Lemma} \label{lemma: CM Gram}
  Let $\fX_\mu$ be a Shimura curve in $\QQ_D$.
  Let $z$ be a CM-point of discriminant $d$ on $X_0^D(1)$ and
  $(A_z,\rho_\mu)$ be the corresponding principally
  polarized abelian surface in $\fX_\mu$. Let $\mathcal L_z$ be the
  lattice of singular relations satisfied by $(A_z,\rho_\mu)$. Then
  the rank of $\mathcal L_z$ is $3$, and
  $$
    \det(\gen{\ell_i,\ell_j}_\Delta)=4|d|
  $$
  for a basis $\{\ell_1,\ell_2,\ell_3\}$ of $\mathcal L_z$, where
  $\gen{\cdot,\cdot}_\Delta$ is the bilinear form in \eqref{equation:
    bilinear Delta}.
\end{Lemma}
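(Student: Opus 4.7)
My plan is to establish both claims --- rank three and Gram determinant $4|d|$ --- by analysing the structure of $\End^0(A_z)$ at the CM-point $z$.

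First I address the rank. At the CM-point of discriminant $d$ on $X_0^D(1)$, the abelian surface $A_z$ acquires an extra endomorphism beyond $\O$: since $z$ is the fixed point on $\H$ of the optimal embedding $R\hookrightarrow\O$ of the quadratic order $R$ of discriminant $d$, scalar multiplication on $\C^2$ by the eigenvalue of $\phi(\omega)$ at $v_z$ (for a generator $\omega$ of $R$) preserves $\Lambda_z$ and defines a new endomorphism of $A_z$. Consequently $A_z$ is isogenous to $E\times E$ for a CM elliptic curve $E$ with complex multiplication by $K=\Q(\sqrt{d})$, and $\End^0(A_z)\cong M_2(K)$. The Rosati involution associated to $\rho_\mu$ is a positive involution on $M_2(K)$; any such has a $+1$-eigenspace of $\Q$-dimension $4$ (equivalent after change of basis to the space of $2\times 2$ Hermitian matrices over $K$). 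Thus the lattice of Rosati-symmetric elements of $\End(A_z)$ has $\Z$-rank $4$, and quotienting by the rank-one sublattice of scalar endomorphisms yields $\mathcal{L}_z$ of rank $3$.

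For the determinant I would work at a diagonal representative of the moduli class. At a period matrix $\tau=\SM{\tau_E}{0}{0}{\tau_E}$, where $A\tau_E^2+B\tau_E+C=0$ is primitive integral with $B^2-4AC=d$, substituting $\tau_1=\tau_3=\tau_E$ and $\tau_2=0$ into a singular relation and separating the coefficients of $1$ and $\tau_E$ (by irrationality of $\tau_E$) gives two linear constraints whose integer solution lattice is spanned by
\[
\ell_1=(0,1,0,0,0),\qquad \ell_2=(1,0,-1,0,0),\qquad \ell_3=(B,0,0,-A,C).
\]
A direct computation of $\Delta$-values and pairings then produces the Gram matrix
\[
\bigl(\gen{\ell_i,\ell_j}_\Delta\bigr)=\begin{pmatrix}1 & 0 & 0 \\ 0 & 4 & 2B \\ 0 & 2B & 4AC\end{pmatrix},
\]
whose determinant is $16AC-4B^2=-4(B^2-4AC)=-4d=4|d|$.

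The main obstacle is justifying the reduction to this diagonal representative, since for a general CM-point of discriminant $d$ on $X_0^D(1)$ the surface $A_z$ is only isogenous (not isomorphic as a principally polarized surface) to $E\times E$ with the product polarization. One can circumvent this by computing intrinsically: identify $\mathcal{L}_z$ with the lattice of $2\times 2$ Hermitian matrices over $R$ modulo the scalar diagonal sublattice $\Z\cdot I$, via the structure of $\End(A_z)$ as an order in $M_2(K)$ compatible with the Rosati involution. On this Hermitian lattice the discriminant form $\Delta(H)=(H_{11}-H_{22})^2+4\,N_{K/\Q}(H_{12})$ has Gram determinant $4|d|$ relative to the basis coming from $\{1,\omega\}$ (where $\omega$ is the canonical generator of $R$), a direct calculation that is independent of the specific moduli point and confirms the formula in general.
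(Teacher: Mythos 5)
Your rank argument is sound: at a CM-point $\End^0(A_z)\simeq M_2(K)$ with $K=\Q(\sqrt d)$, the Rosati involution is a positive involution of the second kind, its fixed subspace is the $4$-dimensional space of Hermitian matrices, and quotienting the integral symmetric endomorphisms by $\Z$ gives $\mathcal L_z$ of rank $3$. This is a legitimate (and more conceptual) route to the first assertion than the paper's, which gets the rank as a byproduct of an explicit computation.

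The determinant half, however, has a genuine gap, and it sits exactly where you flag it. The computation at the diagonal period matrix $\SM{\tau_E}{0}{0}{\tau_E}$ is carried out at the product-polarized point of $\sA_2$ (a point of the curve $\fY_1$ of Example \ref{example: YN}), whereas the lattice of singular relations is an invariant of the principally polarized surface $(A_z,\rho_\mu)$, not of its isogeny class; the CM-point of $\fX_\mu$ is in general a different point of $\sA_2$ with a non-isomorphic singular lattice (indeed the whole paper rests on the fact that $\mathcal L_z$ contains the rank-$2$ sublattice $L_\mu$ of determinant $16D$, which already distinguishes inequivalent $\mu$'s). Your proposed repair replaces this by the assertion that $\mathcal L_z$ is isometric to the lattice of Hermitian matrices with diagonal entries in $\Z$ and off-diagonal entry in the order $R$ of discriminant $d$, modulo $\Z\cdot I$. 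But that identification is precisely the content of the lemma and cannot simply be asserted: $\End(A_z)$ is some order in $M_2(K)$ containing both $\O$ and $R$ whose exact shape depends on $z$, and the Rosati involution transported to $M_2(K)$ is $H\mapsto C^{-1}\ol H^{\,t}C$ for some Hermitian $C$ determined by $\mu$ and the period lattice; a priori the integral symmetric part could differ from your standard Hermitian lattice by a finite index, which would multiply the Gram determinant by the square of that index. Pinning down that the answer is exactly $4|d|$ (and not $4|d|$ times a square) is the whole difficulty. The paper resolves it by brute force: it takes the explicit maximal order and symplectic basis of Lemma \ref{lemma: explicit order}, writes Hashimoto's normalized period matrix $\tau_z$, exhibits an explicit $\Z$-basis \eqref{equation: singular basis} of $\mathcal L_z$ in terms of the coefficients $b_1,b_2,b_3$ of the optimal embedding, and computes the Gram matrix \eqref{equation: CM Gram} whose determinant is verified to equal $4\nm(\alpha)=4|d|$ directly. (That explicit Gram matrix is also needed later, in Proposition \ref{proposition: existence of CM} and the examples, so an abstract argument would not fully substitute for it even if completed.)
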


\begin{proof}
  We first consider the case where the quadratic form $Q_\mu$
  associated to $\fX_\mu$ is primitive of discriminant $-16D$. Let
  $e_1,\ldots,e_4$ be given as in Part (1) of
  Lemma \ref{lemma: explicit order}. By the same lemma, we may assume
  that $\mu=I$. Then a symplectic basis for $\O$ with respect to
  $E_\mu$ is
  $$
    \alpha_1=e_3-\frac{p-1}2e_4,\quad
    \alpha_2=-sDe_1-e_4, \quad \alpha_3=e_1, \quad \alpha_4=e_2
  $$
  (see Page 537 of \cite{Hashimoto}). Let $\phi:\B_D\to M(2,\R)$ be
  the embedding defined by
  $$
    \phi(I)=\M0{-1}D0, \qquad \phi(J)=\M{\sqrt p}00{-\sqrt p}.
  $$
  Then the complex torus $\C^2/\phi(\O)v_z$ with principal
  polarization given by $\rho_\mu$ has a normalized period matrix
  $$
    \tau_z=\frac1{pz}
    \M{-\overline\epsilon^2+\frac{(p-1)sD}2z+D\epsilon^2z^2}
    {\overline\epsilon-(p-1)sDz-D\epsilon z^2}
    {\overline\epsilon-(p-1)sDz-D\epsilon z^2}
    {-1-2sDz+Dz^2},
  $$
  where $\epsilon=(1+\sqrt p)/2$ and $\ol\epsilon=(1-\sqrt p)/2$
  (Theorem 3.5 of \cite{Hashimoto}). Write $\tau_z$ as
  $\SM{\tau_1}{\tau_2}{\tau_2}{\tau_3}$. We have
  \begin{equation} \label{equation: 5 times 3}
    \begin{pmatrix}\tau_1\\\tau_2\\\tau_3\\\tau_2^2-\tau_1\tau_3
    \\1\end{pmatrix}
   =\frac1p\begin{pmatrix}
    \overline\epsilon^2 &{(p-1)sD}/2 & \epsilon^2 \\
   -\ol\epsilon &-(p-1)sD&-\epsilon \\
    1 & -2sD & 1 \\
    2sD\ol\epsilon &(p-1)s^2D^2+D & 2sD\epsilon \\
    0 & p & 0\end{pmatrix}
    \begin{pmatrix}-1/z \\ 1 \\ Dz\end{pmatrix}.
  \end{equation}
  Let
  $$
    \beta_1=2e_2-e_1=J, \quad \beta_2=e_3,\quad \beta_3=e_4,
  $$
  which form a basis for the $\Z$-module of elements of trace $0$ in
  $\O$.
  If $z$ is a CM-point of discriminant of $d$, then it is fixed by
  $\phi(\alpha)$ for some $\alpha=b_1\beta_1+b_2\beta_2+b_3\beta_3$
  with $\nm(\alpha)=|d|$ and
  $$
    \gcd(b_1,b_2,b_3)=\begin{cases}
    1, &\text{if }d\equiv 1\mod 4, \\
    2, &\text{if }d\equiv 0\mod 4.\end{cases}
  $$
  Write $\alpha$ as
  $\alpha=c_1I+c_2J+c_3IJ$. Changing $\alpha$ to $-\alpha$ if
  necessary, we may assume that $c_1+c_3\sqrt p>0$. Then we have
  $$
    z=\frac{c_2\sqrt p+\sqrt{d}}{D(c_1+c_3\sqrt p)}, \qquad
    -\frac1z=\frac{-c_2\sqrt p+\sqrt{d}}{c_1-c_3\sqrt p},
  $$
  and
  $$
    \begin{pmatrix}-1/z \\ 1 \\ Dz\end{pmatrix}
   =\begin{pmatrix}
    \gamma & \delta \\ 0 & 1 \\\ol\gamma & \ol\delta\end{pmatrix}
    \begin{pmatrix}\sqrt{d} \\ 1\end{pmatrix}, \qquad
    \gamma=\frac1{c_1-c_3\sqrt p}, \quad
    \delta=\frac{-c_2\sqrt p}{c_1-c_3\sqrt p}.
  $$
  Then from \eqref{equation: 5 times 3}, we have that
  $(a_1,\ldots,a_5)$ is a singular relation for
  $\SM{\tau_1}{\tau_2}{\tau_2}{\tau_3}$ if and only if it is in the
  nullspace of
  \begin{equation} \label{equation: 5 times 2}
    \frac1p\begin{pmatrix}
    \overline\epsilon^2 &{(p-1)sD}/2 & \epsilon^2 \\
   -\ol\epsilon &-(p-1)sD&-\epsilon \\
    1 & -2sD & 1 \\
    2sD\ol\epsilon &(p-1)s^2D^2+D & 2sD\epsilon \\
    0 & p & 0\end{pmatrix}
    \begin{pmatrix}
    \gamma & \delta \\ 0 & 1 \\\ol\gamma & \ol\delta\end{pmatrix}
  \end{equation}
  Noticing that $b_1,b_2,b_3$ and $c_1,c_2,c_3$ are related by
  $$
    c_1=\frac{b_2}2, \qquad
    c_2=b_1+\frac{sDb_3}p, \qquad
    c_3=\frac{b_2}2+\frac{b_3}p,
  $$
  we check directly that the nullspace of \eqref{equation: 5 times 2}
  contains
  $$
    (1,1,(1-p)/4,0,0), \quad
    (0,2sD,0,1,D(s^2D-t)), \quad
    (0,b_2,b_3+b_2(1-p)/2,0,2b_1).
  $$
  If $d\equiv1\mod 4$, then we must have $(1+\alpha)/2\in\O$, i.e.,
  $2|b_2,b_3$ and $2\nmid b_1$. As $\gcd(b_1,b_2,b_3)=1$, we find that
  \begin{equation} \label{equation: singular basis}
    (1,1,(1-p)/4,0,0), \quad
    (0,2sD,0,1,D(s^2D-t)), \quad
    (0,b_2/2,b_3/2+b_2(1-p)/4,0,b_1)
  \end{equation}
  form a basis for the $\Z$-module of the singular relations. If
  $d\equiv 0\mod 4$, then $\gcd(b_1,b_2,b_3)=2$. Moreover, $b_2/2$ and
  $b_3/2$ cannot both be even. (If $4|b_2,b_3$, then $b_1/2$
  is odd. This implies that $(1+\alpha/2)/2\in\O$, and the CM-point has
  an odd discriminant.) Therefore, the singular relations in
  \eqref{equation: singular basis} also form a $\Z$-basis for the
  lattice of singular relations. The Gram
  matrix with respect to this basis is
  \begin{equation} \label{equation: CM Gram}
    \begin{pmatrix}
    p & 2sD & -b_2p/2-b_3 \\
    2sD & 4tD & 2b_1-b_2sD \\
    -b_2p/2-b_3 & 2b_1-b_2sD & b_2^2/4 \end{pmatrix}.
  \end{equation}
  We check that its determinant is
  $$
    D((1-p)b_2^2-4b_2b_3-4b_3^2t-8sb_1b_3)-4pb_1^2,
  $$
  which is precisely $4\nm(\alpha)=4|d|$. This proves the lemma for
  the case $Q_\mu$ is primitive.

  The case where $D\equiv 3\mod 4$ and $Q_\mu=4Q'$ can be proved in
  the same way. Here we omit most of the details and only mention that
  if we let $\O$ be the maximal order in Part (2) of Lemma \ref{lemma:
    explicit order}, then with respect to the choice $\mu=I$, a
  symplectic basis is
  $$
    \alpha_1=\frac{1+I}2, \quad \alpha_2=-\frac{sDJ+IJ}{2p}, \quad
    \alpha_3=1, \quad \alpha_4=J.
  $$
  Then the normalized period matrix obtained using this basis is
  $$
    \tau_z=\frac1{4z}\M{Dz^2+2z-1}{-Dz^2/\sqrt p-1/\sqrt p}
    {-Dz^2/\sqrt p-1/\sqrt p}{Dz^2/p-2sDz/p-1/p}.
  $$
  For a non-CM-point $z$, the lattice of singular relations is spanned
  by
  $$
    (1,0,-p,0,-(1+sD)/2), \quad
    (0,0,(1-sD)/2,1,-tD).
  $$
  For a CM-point $z$ of discriminant $d$ whose corresponding optimal
  embedding is given by $\sqrt d\mapsto b_1J+b_2I+b_3(sDJ+IJ)/2p$, the
  lattice is spanned by the two singular relations above and an
  additional relation
  $$
    (0,-b_2,b_3/2,0,-b_1/2)
  $$
  ($b_1$ and $b_3$ are necessarily even and the parity of $b_2$
  depends on that of $d$). The Gram matrix with respect to this basis
  is
  $$
    \begin{pmatrix}4p&2sD&-b_3\\2sD&4tD&b_1\\-b_3&b_1&b_2^2\end{pmatrix},
  $$
  whose determinant is precisely $4|d|$. This completes the proof.
\end{proof}

Note that the proof of the lemma shows that if $\fX$ is a Shimura
curve with quadratic form $ax^2+2bxy+cy^2$, then the Gram matrix for a
CM-point on $\fX$ will be equivalent to a matrix of the form
\begin{equation} \label{equation: matrix form}
  \begin{pmatrix}a&b&u\\b&c&v\\u&v&n\end{pmatrix}
\end{equation}
for some integers $u$, $v$, $n$. In the following discussion, we will
consider the converse problem. That is, given a matrix of the form
\eqref{equation: matrix form}, does there exist a CM-point whose Gram
matrix is equivalent to the given matrix?

We recall that by \eqref{equation: parity}, the integers $u$, $v$, and
$n$ in \eqref{equation: matrix form} must satisfy $u\equiv an,v\equiv
cn\mod 2$ if it is the Gram matrix for some CM-point.

\begin{Proposition} \label{proposition: existence of CM}
  Let $\fX_\mu$ be a Shimura curve in $\QQ_D$ with
  quadratic form $Q_\mu(x,y)=ax^2+2bxy+cy^2$. Suppose that
  $$
    M=\begin{pmatrix}a&b&u\\b&c&v\\u&v&n\end{pmatrix}
  $$
  satisfy
  \begin{enumerate}
  \item $n\equiv 0,1\mod 4$, $u\equiv an\mod 2$, $v\equiv bn\mod
    2$, and
  \item $\det M=4|d|$ for some negative discriminant $d$ such that
    $\Q(\sqrt d)$ can be embedded in $\B_D$.
  \end{enumerate}
  Let $d_0$ be the discriminant of the quadratic field $\Q(\sqrt d)$
  and set $f=\sqrt{d/d_0}$. Then for each divisor positive $r$ of $f$
  with $(r,D)=1$, there exists a CM-point of discriminant 
  $r^2d_0$ on $\fX_\mu$ such that its lattice of singular relations
  contains a sublattice of index $f/r$ whose Gram matrix is equivalent
  to $M$. Moreover, if a CM-point of discriminant $r^2d_0$ on
  $\fX_\mu$ has this property, then any CM-point in the same
  $\Gal(L/K)$-orbit also has this property, where $K=\Q(\sqrt d)$ and $L$
  is the ring class field of the quadratic order of discriminant
  $r^2d_0$.
\end{Proposition}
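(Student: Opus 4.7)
The strategy is to realize $M$ as the Gram matrix of a sublattice of the lattice of singular relations at a suitable CM-point, using the classical theory of optimal embeddings of imaginary quadratic orders into the maximal order $\O$ in the indefinite quaternion algebra $\B_D$. The upper-left $2\times 2$ block of $M$ is $\SM abbc$, the Gram matrix of $L_\mu$ under the negative trace form, while the extra data $(u,v,n)$ encodes a third element of $\mu^\perp\otimes\Q$ together with its pairings against a $\Z$-basis of $L_\mu$ and its own $\Delta$-discriminant. For a fixed divisor $r$ of $f$ with $(r,D)=1$, the plan is to produce an optimal embedding of the quadratic order of discriminant $r^2 d_0$ into $\O$ whose image contains an element $\gamma_r\in\mu^\perp$ with appropriate inner products; scaling by $f/r$ then gives an element of $\End(A_{z_r})$ of $\Delta$-discriminant $f^2 d_0=d$, corresponding to the inclusion of the suborder of discriminant $d$ and index $f/r$.

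The existence of such an embedding rests on two inputs. First, the determinant condition $\det M=4|d|$ together with hypothesis (2) implies that $K=\Q(\sqrt{d_0})$ embeds into $\B_D$, so by the classical theory of embeddings of quadratic orders into maximal orders in indefinite quaternion algebras over $\Q$, optimal embeddings of the quadratic order of discriminant $r^2 d_0$ into $\O$ exist (the assumption $(r,D)=1$ removes any local obstruction at primes dividing $D$). Second, to match the inner-product data encoded in $u$ and $v$, we either apply strong approximation in $\B_D$ to conjugate embeddings into the required position, or carry out an explicit construction paralleling that in Lemma \ref{lemma: explicit order}: one solves a system of linear and norm equations for the coordinates $(b_1,b_2,b_3)$ of $\gamma_r$ in a convenient basis of trace-$0$ elements of $\O$, reversing the computation from the proof of Lemma \ref{lemma: CM Gram}. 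Once $\sigma$ is fixed, it gives a CM-point $z_r$ on $X_0^D(1)$ whose image in $\fX_\mu$ is the desired CM-point; a Gram-matrix computation as in \eqref{equation: CM Gram} then verifies that the sublattice generated by $L_\mu$ together with the singular relation attached to $(f/r)\gamma_r$ is equivalent to $M$ and has index $f/r$ in the full lattice, consistent with the identity $\det M=(f/r)^2\cdot 4r^2|d_0|$.

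For the Galois-orbit assertion, $\Gal(L/K)$ acts on the CM-points of discriminant $r^2 d_0$ on $\fX_\mu$ through $\Pic$ of the quadratic order of discriminant $r^2 d_0$ via the Artin reciprocity map. This action preserves the isomorphism class of the datum consisting of $(A_z,\rho_\mu)$ together with the embedded copy of the quadratic order in $\End(A_z)$, and therefore preserves the $\GL(3,\Z)$-equivalence class of the full singular relations lattice along with the distinguished family of sublattices indexed by suborders. Hence the property of containing a sublattice of index $f/r$ with Gram matrix equivalent to $M$ is invariant within each Galois orbit. The principal obstacle in the argument is the second part of the existence step: abstract existence of embeddings is classical, but pinning down one whose image has prescribed inner products with $L_\mu$ requires either careful use of strong approximation or the explicit reverse construction modeled on Lemma \ref{lemma: CM Gram}.
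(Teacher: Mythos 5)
Your overall strategy is the same as the paper's: reduce to the explicit order of Lemma \ref{lemma: explicit order}, use the Gram matrix formula \eqref{equation: CM Gram} for a CM-point with $\beta_z=b_1\beta_1+b_2\beta_2+b_3\beta_3$, and try to choose the CM-point so that the resulting lattice contains a sublattice of index $f/r$ equivalent to $M$. But the decisive step is exactly the one you flag at the end as "the principal obstacle" and then do not carry out. You cannot get it from abstract existence of optimal embeddings plus strong approximation: the conjugating element must normalize $\O$ and commute with the rigidified data, so the achievable positions of the embedding form precisely the finite set of $\Gal(L/K)$-orbits, which by Lemma \ref{lemma: orbit condition} is parametrized by the residues $b_1\bmod 2D$ solving $pb_1^2\equiv r^2d_0\pmod{4D}$. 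After writing down the explicit change-of-basis matrix $U$ with bottom row $(x,y,r')$, $r'=f/r$, one finds that $UAU^t=M$ holds with $x,y\in\Z$ if and only if $b_1r'\equiv v/2\pmod{2D}$. The whole content of the proposition is that this particular residue class is among the achievable ones, and this is where hypothesis (2) enters essentially: expanding $\det M=4f^2|d_0|$ gives $4f^2|d_0|=-pv^2+4D(n+suv-tu^2)$, whence $p(v/2)^2\equiv f^2d_0$ modulo the relevant powers of each prime dividing $2D$, matching the congruence $p(b_1r')^2\equiv f^2d_0$ satisfied by the candidates; one then checks prime by prime (with a delicate case analysis at $2$, using $n\equiv 0,1\bmod 4$, $t\equiv 1\bmod 4$, and the parity conditions on $u,v$) that a solution $b_1$ exists. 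Without this argument the claimed CM-point need not exist, so the proof is incomplete.

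Your justification of the Galois-orbit statement is also shakier than it looks: asserting that $\Gal(L/K)$ "preserves the $\GL(3,\Z)$-equivalence class of the full singular relations lattice" is essentially the corollary that the paper deduces \emph{from} this proposition, so invoking it here is close to circular unless you establish it independently via Shimura reciprocity acting on the rigidified moduli datum. The paper instead gets it for free from the same mechanism as the existence step: the condition $b_1r'\equiv v/2\pmod{2D}$ depends only on $b_1\bmod 2D$, which by Lemma \ref{lemma: orbit condition}(2) is constant on $\Gal(L/K)$-orbits.
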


In particular, when $M$ is the Gram matrix in \eqref{equation: CM Gram}
for the lattice of singular relations of a CM-point, the proposition
yields the following corollary.

\begin{Corollary} Let $d$ be a negative discriminant such that
  CM-points of discriminant $d$ exist on a Shimura curve
  $\fX\in\QQ_D$. Let $K=\Q(\sqrt d)$ and $L$ be the ring class field
  of the quadratic order of discriminant $d$. Then all points in a
  $\Gal(L/K)$-orbit of CM-points of discriminant $d$ have isomorphic
  lattices of singular relations.
\end{Corollary}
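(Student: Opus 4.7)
The plan is to deduce the corollary as a direct specialization of Proposition \ref{proposition: existence of CM} to the case in which the sublattice in the statement of the proposition equals the entire lattice of singular relations, i.e., has index one. The work is therefore concentrated on checking that the hypotheses and conclusion of the proposition apply verbatim to any one CM-point in the orbit.

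First, fix a CM-point $z_1$ of discriminant $d$ on $\fX\in\QQ_D$, and choose a $\Z$-basis $\{\ell_1,\ell_2,\ell_3\}$ of its lattice of singular relations $\mathcal L_{z_1}$. By Lemma \ref{lemma: CM Gram}, $\mathcal L_{z_1}$ has rank three and Gram matrix $M:=(\langle\ell_i,\ell_j\rangle_\Delta)$ of determinant $4|d|$; the proof of that lemma shows that, after a $\GL(3,\Z)$-change of basis, $M$ can be put in the form \eqref{equation: matrix form} in which the upper-left $2\times 2$ block records the quadratic form $ax^2+2bxy+cy^2=Q_\mu(x,y)$ attached to $\fX$. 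The congruences $u\equiv an,v\equiv cn\pmod 2$ follow from the general parity identity \eqref{equation: parity}, and the determinant condition $\det M=4|d|$ is exactly the hypothesis in item (2) of Proposition \ref{proposition: existence of CM}; moreover $\Q(\sqrt d)$ embeds in $\B_D$ because a CM-point of this discriminant actually exists on $\fX$.

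Second, write $d=r^2d_0$ with $d_0$ the fundamental discriminant and note that here $r=f$, so the sublattice index $f/r$ appearing in the proposition equals $1$. Thus the tautological statement that $\mathcal L_{z_1}$ contains a sublattice of index one with Gram matrix $M$ — namely itself — places $z_1$ in the situation described by the ``moreover'' clause of the proposition. Invoking that clause, every CM-point in the $\Gal(L/K)$-orbit of $z_1$ also has a lattice of singular relations that contains a sublattice of index one with Gram matrix equivalent to $M$. An index-one sublattice is the whole lattice, so the Gram matrix of $\mathcal L_{\sigma(z_1)}$ is $\GL(3,\Z)$-equivalent to $M$ for every $\sigma\in\Gal(L/K)$. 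Since positive definite lattices of the same rank are isomorphic as lattices precisely when their Gram matrices are $\GL(3,\Z)$-equivalent, this gives the desired isomorphism $\mathcal L_{z_1}\simeq\mathcal L_{\sigma(z_1)}$.

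The only real obstacle is that the corollary is no stronger than the Galois-equivariance statement that is already packaged into the proposition. So the substantive content lies entirely in that proposition, whose proof presumably uses an explicit description of the Galois action on CM-points via optimal embeddings of the quadratic order of discriminant $d$ into $\O$: two embeddings in the same $\Gal(L/K)$-orbit differ by conjugation by an element that acts trivially on the abstract sublattice of endomorphisms of trace $0$ orthogonal to $\mu$, hence preserves the Gram matrix up to $\GL$-equivalence. Once the proposition is in hand, the corollary is immediate by the index-one specialization sketched above.
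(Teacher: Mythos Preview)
Your proposal is correct and follows exactly the approach the paper intends: the corollary is obtained from Proposition~\ref{proposition: existence of CM} by taking $M$ to be the Gram matrix \eqref{equation: CM Gram} of the full lattice of singular relations of a given CM-point of discriminant $d$, so that $r=f$ and the sublattice index $f/r$ is $1$, and then invoking the ``moreover'' clause to propagate this to the whole $\Gal(L/K)$-orbit. Your verification of the hypotheses (via Lemma~\ref{lemma: CM Gram} and the parity identity \eqref{equation: parity}) is more explicit than the paper's one-line remark, but the argument is the same; note also that the condition $(r,D)=1$ needed to set $r=f$ is automatic here, since the existence of a CM-point of discriminant $d=f^2d_0$ on $\fX$ forces $(f,D)=1$ by the local theory of optimal embeddings at primes dividing $D$.
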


  Here we will prove Proposition \ref{proposition: existence of CM}
  only for the case where $Q_\mu$ is primitive; the proof of the other case
  $Q_\mu=4Q$ is similar. Before we go into the proof, let us briefly
  discuss CM-points and their corresponding optimal embeddings.

  Let $\phi:B\hookrightarrow M(2,\R)$ be a fixed embedding of $\B=\B_D$
  into $M(2,\R)$. Suppose that $K$ is an imaginary quadratic field that
  can be embedded into $\B$. Notice that if $\psi:K\hookrightarrow B$
  is an embedding, then so is $\ol\psi:a\to\ol{\psi(a)}$, where
  $\ol{\psi(a)}$ is the quaternionic conjugate of $\psi(a)$. The two
  embeddings $\psi$ and $\ol\psi$ have the same fixed point in the
  upper half-plane. To remove the ambiguity when we talk about the
  embedding corresponding to a CM-point, we observe that if $a\in
  K\backslash\Q$, then the eigenvalues of $\phi(\psi(a))$ are $a$ and
  $\ol a$. I.e., if $z_\psi$ is the CM-point fixed by $\phi(\psi(K))$,
  then
  $$
    \phi(\psi(a))\begin{pmatrix}z_\psi\\1\end{pmatrix}
   =a\begin{pmatrix}z_\psi\\1\end{pmatrix} \text{ or }
    \phi(\psi(a))\begin{pmatrix}z_\psi\\1\end{pmatrix}
   =\ol a\begin{pmatrix}z_\psi\\1\end{pmatrix}.
  $$

\begin{Definition}
  We say $\psi$ is \emph{normalized} (with respect to $\phi$) if the
  first case holds.
\end{Definition}

\begin{Remark} \label{remark: normalized embedding}
  Suppose that $\psi:\Q(\sqrt d)\hookrightarrow \B$ is a normalized embedding
  with respect to $\phi$ with corresponding CM-point $z_\psi$. Then
  from the relation
  $$
    \phi(\psi(\sqrt d))\begin{pmatrix}z_\psi\\1\end{pmatrix}
   =\sqrt d\begin{pmatrix}z_\psi\\1\end{pmatrix},
  $$
  we see that the $(2,1)$-entry of $\phi(\psi(\sqrt d))$ is
  necessarily positive. In other words, normalized embeddings $\psi$ with
  respect to $\phi$ are characterized by the property that the
  $(2,1)$-entry of $\phi(\psi(\sqrt d))$ is positive. We then can
  easily check that if $\psi$ is a normalized embedding, then for
  $\gamma\in\B^\times$, $\gamma\psi\gamma^{-1}$ 
  is a normalized embedding if and only if $\nm(\gamma)>0$ (cf. Lemma
  \ref{lemma: positive}).
\end{Remark}

\begin{Notation} Suppose that the normalized embedding for a CM-point
  $z$ of discriminant $d$ is $\psi$. We set
  $$
    \beta_z=\psi(\sqrt d).
  $$
\end{Notation}

  Now assume that $Q_\mu$ is a primitive quadratic form.
  Let $p$ be an odd prime presented by $Q_\mu$. By Part (1) of
  Lemma \ref{lemma: explicit order}, we may assume that the quaternion
  algebra $\B_D$ is $\JS{-D,p}\Q$, that the maximal order $\O$ is
  spanned by the four elements in \eqref{equation: basis for O}, that
  $\mu=I$, and that $Q_\mu=px^2+4sDxy+4ty^2$, where $s$ is an even
  integer such that $s^2D+1\equiv 0\mod p$ and $t=(s^2D+1)/p$. 

\begin{Lemma} \label{lemma: orbit condition}
  Let $p$, $s$, $t$, and $\O$ be as above. Let
  $e_1,\ldots,e_4$ be the basis for $\O$ given in \eqref{equation:
    basis for O} and
  $$
    \beta_1=2e_2-e_1=J, \quad \beta_2=e_3, \quad \beta_3=e_4.
  $$
  Let $z$ be a CM-point of discriminant $d$ on $X_0^D(1)$ with
  $\beta_z=b_1\beta_1+b_2\beta_2+b_3\beta_3$. We have the following
  properties.
  \begin{enumerate}
  \item We have
    $$
      b_1^2p\equiv d\mod 4D.
    $$
  \item Let $K=\Q(\sqrt d)$ and $L$ be the ring class field of the
    quadratic order of discriminant $d$. Let $z'$ be another CM-point
    of discriminant $d$ on $X_0^D(1)$ with
    $\beta_{z'}=b_1'\beta_1+b_2'\beta_2+b_3'\beta_3$. Then $z$ and
    $z'$ lie in the same $\Gal(L/K)$-orbit if and only if
    $$
      b_1\equiv b_1'\mod 2D.
    $$
  \item There is a one-to-one and onto correspondence between the set
    of $\Gal(L/K)$-orbits of CM-points of discriminant $d$ on
    $X_0^D(1)$ and the set
  $$
    \{r\mod 2D: pr^2\equiv d\mod 4D\}.
  $$ 
  \end{enumerate}
\end{Lemma}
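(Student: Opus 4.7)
For Part (1), I would proceed by direct norm computation. Writing $\beta_z = b_1 J + b_2(I+IJ)/2 + b_3(sDJ+IJ)/p$ in $\JS{-D,p}\Q$ and expanding $\nm(\beta_z) = -d$ using $I^2 = -D$, $J^2 = p$, $IJ = -JI$, and the relation $s^2 D + 1 = p t$, one obtains
\[
d - p b_1^2 \;=\; D\!\left[\tfrac{p-1}{4} b_2^2 + b_2 b_3 + 2 s b_1 b_3 + t b_3^2\right].
\]
Since $p \equiv 1 \pmod 4$ the bracketed expression is an integer, and a short case check shows it is divisible by $4$: use the integrality constraints on $(b_1,b_2,b_3)$ recalled just before the lemma ($b_1$ odd and $b_2,b_3$ even when $d \equiv 1 \pmod 4$; all three even when $d \equiv 0 \pmod 4$), together with $4 \mid (p-1)$ and $2 \mid s$. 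This gives $p b_1^2 \equiv d \pmod{4D}$.

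For Parts (2) and (3), I would exhibit $[z] \mapsto b_1(z) \pmod{2D}$ as a well-defined bijection from Galois orbits onto $\{r \pmod{2D}: p r^2 \equiv d \pmod{4D}\}$ and then conclude by a cardinality comparison. Invariance under $\Gal(L/K)$ is proved locally at each prime dividing $2D$. At an odd prime $\ell \mid D$ the algebra $\B_\ell$ is a division algebra with maximal order $\O_\ell$ of residue field $\mathbb F_{\ell^2}$, and $\ord_\ell(I^2)=1$ makes $I$ a uniformizer; the summands $b_2(I+IJ)/2$ and $b_3(sDJ+IJ)/p$ then lie in the maximal ideal, so $\bar\beta_z = b_1 \bar J \in \mathbb F_{\ell^2}$. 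Commutativity of $\mathbb F_{\ell^2}$ gives invariance under $\O_\ell^\times$-conjugation; the local component at $\ell$ of the Galois action (by Shimura reciprocity, conjugation by $\psi_\ell(\mathfrak a_\ell)$ for $[\mathfrak a]\in \Pic(\mathcal O_d)$) is $\O_\ell^\times$-valued modulo centre, because in the inert case $\mathfrak a_\ell$ is generated by a unit of $\mathcal O_{K,\ell}$ up to powers of the central $\ell$, while in the ramified case $\ell \mid d$ forces $b_1 \equiv 0 \pmod \ell$ by Part (1), rendering any Frobenius-type twist harmless. At the prime $2$: if $D$ is odd then $b_1 \pmod 2$ is already pinned down by the parity of $d$; if $D$ is even, the congruence $p b_1^2 \equiv d \pmod 8$ from Part (1) combined with a short local computation in $\O_2/\mathfrak m_2^2$ isolates $b_1 \pmod 4$ as an invariant. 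CRT then yields invariance of $b_1 \pmod{2D}$.

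Bijectivity follows by counting. At each odd $\ell \mid D$ one has $\JS{p}{\ell} = -1$ (since $\JS{-D,p}\Q \simeq \B$ ramifies at $\ell$), so the number of $r \pmod \ell$ with $p r^2 \equiv d \pmod \ell$ equals $1 - \JS{d}{\ell}$; an analogous count at $2$ handles the even-$D$ case. By CRT the target set has cardinality $\prod_{\ell \mid D}(1 - \JS{d}{\ell})$. On the other hand, the Eichler-type embedding count (equivalently Jordan-Livn\'e for Shimura curves) gives $h(d) \prod_{\ell \mid D}(1 - \JS{d}{\ell})$ CM-points of discriminant $d$ on $X_0^D(1)$, and the free action of $\Pic(\mathcal O_d) \cong \Gal(L/K)$ produces $\prod_{\ell \mid D}(1 - \JS{d}{\ell})$ orbits. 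The well-defined invariant map thus descends to a surjection between finite sets of equal cardinality, and is therefore a bijection, establishing Parts (2) and (3) simultaneously.

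The hardest step is the local Galois-invariance at odd $\ell \mid D$: one must rule out conjugation by a uniformizer of $\O_\ell$, which would induce Frobenius on $\mathbb F_{\ell^2}$ and hence send $b_1$ to $-b_1 \pmod \ell$ (precisely because $\JS{p}{\ell} = -1$ forces $\bar J \notin \mathbb F_\ell$). Such sign-flipping conjugation realises the Atkin-Lehner involution $w_\ell$ rather than a Galois element, and separating the two cleanly requires an explicit appeal to Shimura reciprocity together with the case analysis for inert versus ramified $\ell$ in $K$. A secondary technical point is the $2$-adic refinement from $b_1 \pmod 2$ to $b_1 \pmod 4$ in the even-$D$ case.
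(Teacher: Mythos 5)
Your Part (1) matches the paper's argument exactly (norm expansion plus the parity constraints on $(b_1,b_2,b_3)$), and your treatment of unit-conjugation invariance at odd $\ell\mid D$ via the residue field $\mathbb F_{\ell^2}$ of the local division order is a cleaner, more structural version of the paper's explicit computation \eqref{equation: optimal lemma 1}; that part is sound, as is the observation that conjugation by a uniformizer induces Frobenius and realises $w_\ell$ rather than a Galois element.

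The genuine gap is in your final step for Parts (2) and (3). You establish only that $[z]\mapsto b_1(z)\bmod 2D$ is \emph{well defined} on Galois orbits and that source and target have the same cardinality $\prod_{\ell\mid D}\bigl(1-\JS{d}{\ell}\bigr)$, and then declare the map ``a surjection between finite sets of equal cardinality, and therefore a bijection.'' Neither surjectivity nor injectivity follows from well-definedness plus equal cardinality (a constant map between equal finite sets is neither), so bijectivity is asserted, not proved. What is missing is precisely the converse direction of Part (2): that $b_1\equiv b_1'\pmod{2D}$ forces $z$ and $z'$ into the same orbit. The paper closes this by (i) invoking the local--global criterion that two CM-points lie in the same $\Gal(L/K)$-orbit if and only if their optimal embeddings are $\O_q^\times$-equivalent at every place (\cite[Chapter III]{Vigneras}), and (ii) checking at each $q\mid 2D$ that the number of local $\O_q^\times$-classes of optimal embeddings (one or two, according as $q\mid d$ or not) equals the number of residues $r$ with $pr^2\equiv d$ locally, so that the residue of $b_1$ is a \emph{complete} local invariant. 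Your write-up never performs this matching of local class numbers, and your counting of global orbits via the free class-group action cannot substitute for it. To repair the argument you would either supply step (ii), or prove surjectivity directly (e.g.\ by showing that the Atkin--Lehner group permutes the orbits and flips the sign of $b_1$ at each prime component, together with a transitivity statement), neither of which is currently in your proposal.
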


\begin{proof} According to the definition $\beta_z$, we have
$$
  d=-\nm(\beta_z)=b_1^2p+D((p-1)b_2^2/4+b_2b_3+b_3^2t+2b_1b_3s).
$$
If $d\equiv 1\mod 4$, then $(1+\beta_z)/2$ must
be contained in $\O$. Thus, $b_1$ is odd and $b_2$ and $b_3$ are
even. If $d\equiv 0\mod 4$, then $2|b_j$ for all $j$. In either
case, we find that $b_1^2p\equiv d\mod 4D$. This proves Part (1).

To prove Part (2), we let $G$ be the class group of the quadratic
order of discriminant $d$, which acts on the set $\CM(d)$ of CM-points
of discriminant $d$. Through the Shimura reciprocity, we know that 
the $G$-orbits are the same as the $\Gal(L/K)$-orbits. Now recall that
two CM-points are in the same $G$-orbit if and only if the local
embeddings defined by $\beta_z$ and $\beta_{z'}$ are
$\O_q^\times$-equivalent at each place $q$, where $\O_q=\O\otimes\Z_q$
(see \cite[Chapter III]{Vigneras}). When $q\nmid D$, there is only one
equivalence class of local optimal embeddings. Thus, we only need to
consider the places dividing $2D$.

  Consider the case of odd prime $q|D$ first. In such a case,
  $1$, $I$, $J$, and $IJ$ form a $\Z_q$-basis for $\O_q$.
  A local optimal embedding is determined by the image of $\sqrt d$
  under the embedding. Let $c_1I+c_2J+c_3IJ$ be the image of $\sqrt d$.
  Observe that
  $$
    \nm(c_1I+c_2J+c_3IJ)=Dc_1^2-pc_2^2-pDc_3^2.
  $$
  Thus, $c_2$ is a solution of $x^2p\equiv d\mod q$. Now a direct
  computation shows that if $\gamma=d_1+d_2I+d_3J+d_4IJ\in\O_q$ and
  $c_1'I+c_2'J+c_3'IJ=\gamma(c_1I+c_2J+c_3IJ)\gamma^{-1}$, then
  \begin{equation} \label{equation: optimal lemma 1}
    c_2'-c_2=\frac{2D(c_1d_0d_3-c_3d_0d_1+c_1d_1d_2-c_2d_1^2-c_3d_2d_3p+c_2d_3^2p)}
    {\nm(\gamma)}
  \end{equation}
  Therefore, if $\gamma\in\O_q^\times$, then $c_2'\equiv c_2\mod
  q$. Since the number of local optimal embedding is either $1$ or $2$
  depending on whether $q|d$ or not, which coincides with the number of
  solutions of $x^2p\equiv d\mod q$, the equivalence class of a local
  optimal embedding is completely determined by the residue class of
  $c_2$ modulo $q$.

  Now assume that $z$ and $z'$ are two CM-points of discriminant $d$
  with $\beta_z$ and $\beta_{z'}$ given as in the statement. Notice
  that
  $b_1\beta_1+b_2\beta_2+b_3\beta_3=b_2I/2+(b_1+b_2sD/p)J+(b_2/2+b_3/p)IJ$.
  Therefore, for odd primes $q|D$, the local optimal embedding induced
  by $\beta_z$ and $\beta_{z'}$ are equivalent if and only if
  $b_1\equiv b_1'\mod q$.

  We next consider the case of $q=2$. If $2\nmid D$, then from Part
  (1), we know that $b_1\equiv d\equiv b_1'\mod 2$ and we are done.
  If $2|D$, then $1$, $(1+J)/2$, $(I+IJ)/2$, and $IJ$ form a
  $\Z_2$-basis for $\O_2$, and $J$, $(I+IJ)/2$, and $IJ$ form a
  $\Z_2$-basis for the set of elements of trace zero in $\O_2$. Assume
  that $d\equiv 1\mod 4$ and $\beta=c_1J+c_2(I+IJ)/2+c_3IJ$,
  $c_j\in\Z_2$, is the image of $\sqrt d$ under an optimal embedding
  of discriminant $d$ into $\O_2$. As before, $c_1$ must be odd,
  $c_2,c_3$ must be even, and by considering the equality
  $d=-\nm(\beta)$, we deduce that $c_1$ is a solution of $x^2p\equiv
  d\mod 8$. Now let $\gamma=d_0+d_1(1+J)/2+d_2(I+IJ)/2+d_3IJ$ be an
  element of $\O_2^\times$. A direct computation shows that if we write
  $\gamma\beta\gamma^{-1}$ as $c_1'J+c_2'(I+IJ)/2+c_3'IJ$,
  $c_j'\in\Z_2$, then
  \begin{equation*}
  \begin{split}
    c_1'-c_1&=\frac{D(p-1)}{4\nm(\gamma)}
    \left(-d_1(c_2+2c_3)(d_2+2d_3)+2c_1(d_2+2d_3)^2\right) \\
  &\qquad+\frac D{\nm(\gamma)}
   (2c_1d_3^2+c_2d_0d_3-c_3d_1d_2-c_3d_0d_2+2c_1d_2d_3-c_3d_1d_3)
  \end{split}
  \end{equation*}
  Since $2|c_2,c_3$, we must have $c_1'\equiv c_1\mod 4$. Thus, the
  equivalence class of the optimal embedding of $z$ is determined
  completely by the residue class of $c_1$ modulo $4$.

  When $d\equiv 0\mod 4$, there is only one equivalence class of
  optimal embedding of discriminant $d$ into $\O_2$. Also, a direct
  computation shows that if $c_1J+c_2(I+IJ)/2+c_3IJ$ is the image of
  $\sqrt d$ under an optimal embedding, then $2|c_1$ and $c_1/2\equiv
  d/4\mod 2$.

  Now assume that $z$ and $z'$ are two CM-points of discriminant $d$
  with $\beta_z$ and $\beta_{z'}$ given as in the statement. Notice
  that
  $b_1\beta_1+b_2\beta_2+b_3\beta_3=(b_1+b_3sD/p)J+b_2(I+J)/2+b_3IJ/p$.
  Therefore, the local optimal embedding into $\O_2$ defined by
  by $\beta_z$ and $\beta_{z'}$ are equivalent if and only if
  $b_1'\equiv b_1\mod 4$. This proves Part (2).

  Finally, to prove Part (3), we observe that
  $$
    \#\{r\mod 2D:pr^2\equiv d\mod 4D\}
   =\prod_{q|D}\left(1+\JS{pd}q\right)
   =\prod_{q|D}\left(1-\JS dq\right),
  $$
  which is the same as the number of $\Gal(L/K)$-orbits. Therefore,
  from Parts (1) and (2), we see that the map
  $\beta_z=b_1\beta_1+b_2\beta_2+b_3\beta_3\mapsto b_1\mod 2D$ defines
  a one-to-one and onto correspondence between the two sets. This
  completes the proof of the lemma.
\end{proof}

We are now ready to prove Proposition \ref{proposition: existence of
  CM} (for the case $Q_\mu$ is a primitive quadratic form).

\begin{proof}[Proof of Proposition \ref{proposition: existence of CM}]
  Let $p$, $s$, $t$, $\O$, $e_j$, and $\beta_j$ be given as in the
  above lemma. We may assume that $Q_\mu=px^2+4sDxy+4tDy^2$, so that
  $$
    M=\begin{pmatrix}p&2sD&u\\2sD&4tD&v\\u&v&n\end{pmatrix},
  $$
  where $u\equiv n,v\equiv 0\mod 2$. 

  Let $z$ be a CM-point of discriminant $r^2d_0$ with
  $\beta_z=b_1\beta_1+b_2\beta_2+b_3\beta_3$ and $\tau_z$ be the
  corresponding point on $\fX_\mu$. By \eqref{equation: CM
    Gram}, the Gram matrix of the lattice of singular relations of
  $\tau_z$ is equivalent to
  $$
   A=\begin{pmatrix}
    p & 2sD & -b_2p/2-b_3 \\
    2sD & 4tD & 2b_1-b_2sD \\
    -b_2p/2-b_3 & 2b_1-b_2sD & b_2^2/4 \end{pmatrix}.
  $$
  Consider the matrix
  $$
    U=\begin{pmatrix}1&0&0\\0&1&0\\x&y&r'\end{pmatrix}, \quad r'=f/r,
  $$
  with
  \begin{equation*}
  \begin{split}
    x&=t(u+r'(b_2p/2+b_3))-\frac s2(v-r'(2b_1-b_2sD)), \\
    y&=-\frac s2(u+r'(b_2p/2+b_3))+\frac p{4D}(v-r'(2b_1-b_2sD)).
  \end{split}
  \end{equation*}
  We have
  $$
    UAU^t=\begin{pmatrix}p&2tD&u\\2tD&4sD&v\\u&v&n'\end{pmatrix}
  $$
  for some rational number $n'$. Since
  $\det(UAU^t)=(r')^2(4r^2|d_0|)=\det M$, the number $n'$ has to be
  $n$ and we have $UAU^t=M$. Thus, to prove the proposition, one only
  needs to show that there exists a CM-point $z$ with
  $\beta_z=b_1\beta_1+b_2\beta_2+b_3\beta_3$ such that $x$ and $y$ are
  integers. Since $b_2$ and $b_3$ are always even and $s$ is assumed
  to be even, $x$ is necessarily an integer and $y\equiv
  p(v-2r'b_1)/4D\mod 1$. The proof reduces to showing
  that there exists a CM-point satisfying
  $$
    b_1r'\equiv(v/2)\mod 2D.
  $$

  Recall that by Lemma \ref{lemma: orbit condition}, $b_1\mod 2D$ runs
  through all possible solutions of the congruence equation
  \begin{equation} \label{equation: CM proposition temp}
    pb_1^2\equiv r^2d_0\mod 4D.
  \end{equation}
  On the other hand, we observe that
  \begin{equation} \label{equation: f2d0}
    4f^2|d_0|=\det M=-pv^2+4D(n+suv-tu^2).
  \end{equation}
  Assume that $q$ is an odd prime divisor of $D$. If $q|r'$, then
  $q|f$ and $q|v$. Thus, any solution $b_1$ of $pb_1^2\equiv
  r^2d_0\mod 4D$ automatically satisfies $b_1r'\equiv 0\equiv(v/2)\mod
  q$. If $q\nmid r'$, then $q\nmid f$, and $v$ satisfies
  $$
    p(v/2)^2\equiv f^2d_0\mod q.
  $$
  On the other hand, by \eqref{equation: CM proposition temp}, we have
  $$
    p(b_1r')^2\equiv r^2(r')^2d_0=f^2d_0\mod q
  $$
  Thus, there are $b_1$ such that $b_1r'\equiv v/2\mod q$.

  Now consider the case $q=2$. Since $t$ is odd, $s$ is even, and
  $n\equiv u\mod 2$, we have $2|(n+suv-tu^2)$. It follows that, by
  \eqref{equation: f2d0},
  $$
    \begin{cases}
    4|v, &\text{if }f^2d_0\text{ is even}, \\
    2\|v, &\text{if }f^2d_0\text{ is odd}. \end{cases}
  $$
  In fact, because $p$ is congruent to $1$ modulo $4$, the relation
  $s^2D+1=tp$ implies that $t\equiv 1\mod 4$ and we have
  \begin{equation} \label{equation: CM proposition temp 2}
     4|(n+4suv-tu^2).
  \end{equation}
  In the case $f^2d_0$ is odd, all $d_0$, $f$, $r$, $r'$, $v/2$, and
  $b_1$ are odd. Regardless of whether $2$ divides $D$ or not, we can
  always find $b_1$ satisfying \eqref{equation: CM proposition temp}
  such that $b_1r'\equiv(v/2)\mod 2^{1+v_2(D)}$, where $v_2(D)$ is the
  $2$-adic valuation of $D$. Finally, assume that $f^2d_0$ is even. If
  $D$ is odd, we only need $b_1r'\equiv v/2\mod 2$, which clearly
  holds since the condition $f^2d_0\equiv 0\mod 4$ implies that
  $b_1r'$ and $v/2$ are both even. (If $b_1$ is odd, then by
  \eqref{equation: CM proposition temp}, $r^2d_0$ is odd, but then
  $r'$ is even.) If $D$ is even, by \eqref{equation: f2d0} and
  \eqref{equation: CM proposition temp 2}, we have 
  $$
    f^2d_0\equiv p(v/2)^2 \mod 8.
  $$
  Therefore,
  $$
    v/2\equiv \begin{cases}
    0\mod 4, &\text{if }8|f^2d_0, \\
    2\mod 4, &\text{if }4\|f^2d_0. \end{cases}
  $$
  If $4\|f^2d_0$, then either $d_0\equiv 1\mod 4$ and $2\|r'$ or
  $4\|d_0$ and $2\nmid r'$. In either case, we find that $b_1r'\equiv
  2\equiv v/2\mod 4$. If $8|f^2d_0$, there are three cases to
  consider, namely, either $8|d_0$, or $4|d_0$ and $2|r'$, or
  $d_0\equiv 1\mod 4$ and $4|r'$. In each case, we can also verify
  that $b_1r'\equiv 0\equiv v/2\mod 4$. This completes the proof that
  there exists a CM-point such that $b_1r'\equiv(v/2)\mod 2D$ and the
  lemma follows.
\end{proof}

\begin{Remark} \label{remark: Atkin-Lehner}
  It can also be shown that if we identify the Galois
  orbits of CM-points of discriminant $d$ with the set $\{r\mod 2D:
  pr^2\equiv d\mod 4D\}$ as described in the lemma, then the action of
  the Atkin-Lehner involution $w_q$ for a prime divisor $q$ of $D$ on
  the set maps $r\mod 2D$ to the $r'\mod 2D$ with
  $$
    r'\equiv\begin{cases}
    -r\mod q', &\text{if }q'=q, \\
    r\mod q', &\text{if }q'\neq q, \end{cases}
  $$
  for a prime divisor $q'$ of $D$. In particular, the Atkin-Lehner
  involution $w_D$ maps $r\mod 2D$ to $-r\mod 2D$.
\end{Remark}

\begin{Example} \label{example: D=14 -11}
  Let $D=14$. Let $\fX$ with quadratic form
  $5x^2+4xy+12y^2$ be the unique Shimura curve in $\QQ_{14}$.
  Let $e_1,\ldots,e_4$ be the elements in
  Example \ref{example: D=14 order} that span a maximal order $\O$ in
  $\JS{-14,5}\Q$. Let
  $$
    \beta_1=2e_2-e_1=J, \quad \beta_2=e_3, \quad \beta_3=e_4
  $$
  be a basis for the set of elements of trace $0$ in $\O$. A quick
  search yields the following elements of trace $0$ and norm $11$ in
  $\O$:
  $$
    \pm 2I\pm 3J, \quad \pm 3I\pm 3J\pm IJ, \quad
    \pm I\pm\left(\frac{J+IJ}5\right),
    \pm 2I\pm\left(\frac{J-4IJ}5\right),\ldots.
  $$
  With respect to the choice
  $$
    \phi:~I\mapsto\M0{-1}{14}0, \quad J\mapsto\M{\sqrt 5}00{-\sqrt5},
  $$
  the ones that define normalized embeddings of $\Q(\sqrt{-11})$ into
  $\B_{14}$ are those having a positive coefficient for $I$. To
  determine which of them are equivalent to which of them, we express
  them in terms of $\beta_j$ and apply Lemma \ref{lemma: orbit
    condition}. Writing $[b_1,b_2,b_3]$ for
  $b_1\beta_1+b_2\beta_2+b_3\beta_3$, we find that the normalized
  optimal embeddings for the four CM-points of discriminant $-11$ on
  $X_0^{14}(1)$ are
  \begin{equation*}
  \begin{split}
    2I+3J=[115, 4, -10], \quad &2I-3J=[109, 4, -10], \\
    I+\frac{J+IJ}5=[45, 2, -4], \quad &I-\frac{J+IJ}5=[67, 2, -6].
  \end{split}
  \end{equation*}
  Also, according to Remark \ref{remark: Atkin-Lehner}, the actions of
  the Atkin-Lehner involutions $w_2$ and $w_7$ on $2I+3J$ are
  $$
    w_2:2I+3J\longmapsto I+\frac{J+IJ}5, \quad
    w_7:2I+3J\longmapsto I-\frac{J+IJ}5.
  $$

  Now consider the matrix
  $$
    M=\begin{pmatrix}5&2&1\\2&12&0\\1&0&1\end{pmatrix}
  $$
  of determinant $44$. According to Proposition \ref{proposition:
    existence of CM}, there is a CM-point of discriminant $-11$ such
  that the Gram matrix of its lattice of singular relations is
  equivalent to $M$. To find such a CM-point, we first replace the
  quadratic form $5x^2+4xy+12y^2$ by $5x^2+56sxy+56ty^2$ for some
  even integer $s$ satisfying $14s^2+1\equiv 0\mod 5$ and
  $t=(14s^2+1)/5$. Here we choose $s=4$ and $t=45$. Set
  $$
    U=\begin{pmatrix}1&0&0\\22&1&0\\0&0&1\end{pmatrix}
  $$
  and
  $$
    M'=UMU^t=\begin{pmatrix}5&112&1\\112&2520&22\\1&22&1\end{pmatrix}.
  $$
  According to the proof of the proposition, the CM-point $z$ that has a
  Gram matrix equivalent to $M'$ is the one with $b_1\equiv
  22/2=11\mod 28$. In this case, it is the CM-point corresponding to
  $I-(J+IJ)/5=67\beta_1+2\beta_2-6\beta_3$. Indeed, by \eqref{equation: CM
    Gram}, the Gram matrix for this CM-point is precisely $M'$.
\end{Example}

\section{Evaluations of modular functions}
\label{section: evaluations}
In this work, we employ two methods to determine values of modular
functions on Shimura curves at CM-points. The first method uses the
theory of Borcherds forms, and the second method uses explicit Shimizu
lifting. We briefly describe the two methods in this section.

\subsection{Method of Borcherds forms}
In general, Borcherds forms are modular forms on orthogonal groups of
signature $(n,2)$, $n\ge 1$, arising from singular theta
correspondence. For details, we refer the reader to
\cite{Borcherds,Borcherds-Duke,Bruinier} for the classical setting and
\cite{Errthum,Kudla,Schofer} for the adelic setting.

In the case of Shimura curve, the set $L$ of trace-zero elements in an
Eichler order $\O$ in an indefinite quaternion algebra $\B$ over $\Q$ forms
a lattice of signature $(1,2)$ under the trace form, and the subgroup
$O_L^+$ of orientation-preserving elements in the orthogonal group
$O_L$ is essentially $N_\B^+(\O)$, the normalizer of $\O$ in $\B$ with
positive norm. Thus, Borcherds forms in this setting become modular
forms on some subgroups of $N_\B^+(\O)$. See the exposition in Section 2.2 of
\cite{Guo-Yang} for more details. (See also \cite{Errthum}.)

Borcherds forms on Shimura curves, by themselves, are not easy to work
with. What makes them useful in practice is a formula of Schofer
\cite{Schofer}, which expresses the absolute value of the product of
the values of a given Borcherds form at CM-points of the same
discriminant $d$ as a sum involving derivatives of Fourier
coefficients of certain incoherent Eisenstein series. The sum is still
very complicated, but nonetheless can be computed using formulas of
Kudla, Rapoport, and Yang \cite{Kudla-Rapoport-Yang}. We refer the
reader to \cite{Errthum,Guo-Yang} for strategies and examples of the
computation.

In \cite{Guo-Yang}, using Borcherds forms and Schofer's formula, we
determined equations of all hyperelliptic Shimura curves $X_0^D(N)$,
$D>1$. In order to do so, we devised a systematic method to construct
Borcherds forms on Shimura curves. See Section 3 of \cite{Guo-Yang}
for details. Note that all Borcherds forms constructed using our
method are modular on $X_0^D(N)/W_{D,N}$, the quotient of
$X_0^D(N)$ by the full Atkin-Lehner group. However, in the present
work, we are often required to work with modular functions on
$X_0^D(1)/w_D$. For example, for $D=14$, there is a unique Shimura
curve $\fX$ of discriminant $14$ in $\sA_2$ with quadratic form
$5x^2+4xy+12y^2$. Since the quadratic form is not ambiguous, the curve
$\fX$ is isomorphic to $X_0^{14}(1)/w_{14}$. Using the method in
\cite{Guo-Yang}, we can construct a Borcherds form $t$ on
$X_0^{14}(1)/W_{14}$ that takes values $0$, $\infty$, and $1$ at the
CM-points of discriminant $-8$, $-4$, and $-11$, respectively. From
the ramification of the covering $X_0^{14}(1)/w_{14}\to
X_0^{14}(1)/W_{14}$, we know that a Hauptmodul for
$X_0^{14}(1)/w_{14}$ can be taken to be $s=t^{1/2}$. Now
$X_0^{14}(1)/w_{14}$ has two CM-points of discriminant $-11$ and the
values of $s$ at these two points are $\pm 1$. In practice, we need to
know which CM-point of discriminant $-11$ takes value $1$ and which
takes value $-1$. (For example, exactly one of the CM-points of
discriminant $-11$ lies on the intersection of $\fX$ and the Humbert
surface $H_1$. To determine modular parameterization of $\fX$, we need
to know what the value of $s$ at that point is.) Therefore, we will
also need the method of explicit Shimizu lifting described in the next
section.

\subsection{Method of explicit Shimizu lifting}

To describe the method of explicit Shimizu lifting, we first recall
the Jacquet-Langlands correspondence.

\begin{Proposition}[{\cite{Jacquet-Langlands,Shimizu}}]
  \label{proposition: Jacquet-Langlands}
  Let $D$ be discriminant of an indefinite quaternion algebra $\B$ over
  $\Q$. Let $N$ be a positive integer relatively prime to $D$. For an
  Eichler order $\O$ of level $N$ in $\B$ and a positive even
  integer $k$, let $S_k(\O)$ denote the space of modular forms on
  $\O$. Then
  $$
    S_k(\O)\simeq
    S_k^{D\text{-\rm{new}}}(DN):=\bigoplus_{d|N}\bigoplus_{m|N/d}
    S_k^\new(dD)^{[m]}
  $$
  as Hecke modules. Here
  $$
    S_k^\new(dD)^{[m]}=\{f(m\tau):~f(\tau)\in S_k^\new(dD)\}
  $$
  and $S_k^\new(dD)$ denotes the newform subspace of
  cusp forms of weight $k$ on $\Gamma_0(dD)$. In other words, for each
  Hecke eigenform $f(\tau)$ in $S_k^{D\text{-\rm{new}}}(DN)$, there
  corresponds a Hecke eigenform $f_\B(\tau)$ in
  $S_k(\O)$ that shares the same Hecke eigenvalues. Moreover, for
  a prime divisor $p$ of $D$, if the Atkin-Lehner involution $W_p$
  acts on $f$ by $W_pf=\epsilon_p f$, then
  $$
    W_p\widetilde f_\B=-\epsilon_p\widetilde f_\B.
  $$
\end{Proposition}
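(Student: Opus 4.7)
The plan is to prove the statement via the adelic Jacquet--Langlands--Shimizu correspondence, which underlies both of the cited references. The approach has two components: an abstract Hecke-module isomorphism via local-global considerations, and an explicit realization through the Shimizu theta integral that produces the lift $\widetilde f_\B$ concretely; the latter is what will actually be used in subsequent computations.

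First I would reformulate both sides in terms of automorphic representations. The space $S_k(\O)$ decomposes under the Hecke algebra into eigenspaces corresponding to cuspidal automorphic representations $\pi^\B$ of $\B^\times(\mathbb{A})$ with finite-component invariants determined by $\O$ and archimedean component the discrete series of weight $k$. Similarly, $S_k^{D\text{-new}}(DN)$ corresponds to cuspidal representations $\pi$ of $\mathrm{GL}_2(\mathbb{A})$ that are special (Steinberg up to an unramified quadratic twist) at every $p \mid D$ and whose ramification at primes of $N$ is controlled by $\Gamma_0(DN)$; the inner direct sum $\bigoplus_{m \mid N/d}$ simply records the multiplicity with which a newform of level $dD$ contributes to level $DN$ through the standard oldform operations $f(\tau) \mapsto f(m\tau)$. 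Now apply the local Jacquet--Langlands correspondence place by place: at $p \nmid D$ the two groups become locally isomorphic and there is nothing to prove, while at $p \mid D$ the correspondence pairs the Steinberg representation of $\mathrm{GL}_2(\Q_p)$ with the one-dimensional representation of $\B_p^\times/\Q_p^\times$. Combined with strong multiplicity one on both sides, this forces the global Hecke-module isomorphism.

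The Atkin-Lehner sign relation $W_p \widetilde f_\B = -\epsilon_p \widetilde f_\B$ is a purely local statement at $p \mid D$: under the local Jacquet--Langlands correspondence the Atkin-Lehner element, realized on the quaternion side via the local uniformizer in $\B_p^\times$, is matched with the negative of its $\mathrm{GL}_2$-counterpart on the Steinberg. This sign flip is standard and appears explicitly in the character identity originally computed by Jacquet and Langlands.

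For the explicit realization I would construct $\widetilde f_\B$ via the Shimizu theta integral. Starting from the theta kernel $\Theta(\tau, g)$ attached to a suitable Schwartz function on $\B(\mathbb{A})$ under the Weil representation of the dual reductive pair $(\mathrm{SL}_2, \mathrm{O}(\B))$ arising from the reduced-norm form on $\B$, set
$$
\widetilde f_\B(g) = \int_{\mathrm{GL}_2(\Q) Z(\mathbb{A}) \backslash \mathrm{GL}_2(\mathbb{A})} f(\tau)\, \Theta(\tau, g)\, d\tau,
$$
and verify that the result lies in $S_k(\O)$ with matching Hecke eigenvalues. The main obstacle is the non-vanishing of this integral, which Shimizu establishes through a Rankin--Selberg calculation identifying the Petersson norm $\langle \widetilde f_\B, \widetilde f_\B \rangle$ with a nonzero multiple of $L(1, \mathrm{Sym}^2 f)$; once nonvanishing is in hand, strong multiplicity one forces all the claimed matchings, and careful choice of Schwartz function ensures the correct normalization of $\widetilde f_\B$ at the finite places determined by $\O$.
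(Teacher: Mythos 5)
The paper offers no proof of this proposition: it is quoted verbatim from the literature (Jacquet--Langlands and Shimizu), so there is no internal argument to compare against. Your sketch is a faithful outline of the standard proof in those references: the translation of both $S_k(\O)$ and $S_k^{D\text{-new}}(DN)$ into cuspidal automorphic representations, the observation that squarefree discriminant forces the local components at $p\mid D$ to be special (so that local Jacquet--Langlands matches them with unramified characters of $\B_p^\times$), strong multiplicity one to globalize, and the Shimizu theta integral with the Rankin--Selberg nonvanishing computation to realize the lift concretely. The only point I would flag is that the sign $W_p\widetilde f_\B=-\epsilon_p\widetilde f_\B$ does not follow from the character identity alone without pinning down normalizations; it requires the explicit local computation comparing the Atkin--Lehner eigenvalue $\epsilon_p=-\chi(p)$ of the special representation $\mathrm{St}\otimes\chi$ with the value $\chi(p)$ by which a uniformizer of $\B_p$ acts on the corresponding character, but this is exactly what the cited sources carry out, so the outline is sound.
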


Building on earlier works of Shimizu \cite{Shimizu} and Watson
\cite{Watson}, Nelson \cite{Nelson} obtained an explicit formula for
values of modular forms on Shimura curves, which we describe now.

\begin{Definition}[{\cite[Definition 2.2]{Nelson}}]
  Let $D$, $N$, $\B$, and $\O$ be as above. Let $f=\sum_na_nq^n$
  and $f_\B$ be Hecke eigenform in $S_k(DN)$ and $S_k(\O)$,
  respectively, that form a pair under the Jacquet-Langlands
  correspondence. We say $f$ and $f'$ are \emph{compatibly normalized}
  if $a_1=1$ and
  $$
    \int_{\Gamma_0(DN)\backslash\H}y^k|f(z)|^2\frac{dxdy}{y^2}
   =\int_{\O^1\backslash\H}y^k|f_\B(z)|^2\frac{dxdy}{y^2}
  $$
\end{Definition}

From now on we assume that the level $N$ is squarefree. Under this
condition, it is known \cite{Atkin-Lehner} that the Fourier
coefficient $a_m$ in $f=\sum_na_nq^n$ has value $\pm m^{k/2-1}$ for
any divisor $m$ of $DN$. Thus, the definition
\begin{equation} \label{equation: cm}
  c_m:=\frac{\mu((m,N))}{m\cdot a_m}
\end{equation}
is meaningful, where $\mu(\cdot)$ is the M\"obius function. In
addition, let
$$
  \O^\vee=\{\alpha\in\B:\tr(\alpha\ol\beta)\in\Z\text{ for all }\beta\in\O\}
$$
be the dual lattice of $\O$. For a positive divisor $m$ of $DN$, we let
$$
  \O^{(m)}:=\{\alpha\in\O^\vee:\alpha\in\O\otimes\Z_q
  \text{ for all primes }q\text{ not dividing }m\}.
$$
In particular, we have $\O^{(1)}=\O$ and $\O^{(DN)}=\O^\vee$.

\begin{Remark} In the case of $N=1$, if we let $\O$ be the order
  given in Part (1) of Lemma \ref{lemma: explicit order}, and
  $e_1,\ldots,e_4$ be given by \eqref{equation: basis for O}, then
  $$
    e_1=1,\quad e_2=\frac{1+J}2,\quad \frac{e_3}D=\frac{I+IJ}D,\quad
    \frac{e_4}D=\frac{sDJ+IJ}{pD},
  $$
  form a basis for $\O^\vee$ and hence a basis for $\O^{(m)}$ is
  $\{e_1,e_2,e_3/m,e_4/m\}$.
\end{Remark}

Given an element $\alpha=\SM abcd$ in $M(2,\R)$, we let
\begin{equation*}
  X(\alpha)=\frac{a+d+i(b-c)}2.
\end{equation*}
Also, for a positive number $x$, define
$$
  W_k(x):=2^{1-k}\sum_{n=1}^\infty n^k(nxK_{k-1}(nx)-K_k(nx)),
$$
where $K$ is the $K$-Bessel function. For a subset $L$ of
$M(2,\R)$ and a positive number $t$, set
$$
  V_k(L,t)=\sum_{\alpha\in L,~\det(\alpha)=t}
  \left(\frac{X(\alpha)}{|X(\alpha)|}\right)^kW_k(4\pi|X(\alpha)|).
$$
Finally, for $z=x+iy\in\H$, set
$$
  \sigma_z=\M{y^{1/2}}{xy^{-1/2}}0{y^{-1/2}}.
$$
Then Nelson's formula states as follows.

\begin{Proposition}[{\cite[Theorem 3.1]{Nelson}}] Let
  $f(z)=\sum_na_nq^n$ and $f_\B$ be compatibly normalized
  Jacquet-Langlands pair of Hecke eigenforms of weight $k$. For
  $z_1=x_1+iy_1,z_2=x_2+iy_2\in\H$, we have
  $$
    (y_1y_2)^{k/2}\ol{f_\B(z_1)}f_\B(z_2)
  =\sum_{n=1}^\infty a_n\sum_{m|DN}c_mV_k\left(\sigma_{z_1}^{-1}\O^{(m)}
   \sigma_{z_2},n/m\right),
  $$
  where $c_m$ is defined by \eqref{equation: cm}.
\end{Proposition}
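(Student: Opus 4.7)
The plan is to derive this identity as an explicit incarnation of the Shimizu theta correspondence, which is the archimedean-unfolded form of the Jacquet--Langlands correspondence cited in Proposition \ref{proposition: Jacquet-Langlands}. First, I would construct an adelic theta kernel $\Theta_\varphi(z;z_1,z_2)$, where the Schwartz function $\varphi=\varphi_\infty\otimes\varphi_f$ on $\B(\mathbb A)$ is chosen so that: at the archimedean place, $\varphi_\infty$ is the standard weight-$k$ Gaussian-type test function whose Fourier transform produces the factor $(X(\alpha)/|X(\alpha)|)^k W_k(4\pi|X(\alpha)|)$ when paired with $\sigma_{z_1}^{-1}\alpha\sigma_{z_2}$; and at each finite place $q$, the component $\varphi_q$ is a linear combination of characteristic functions of $\O\otimes\Z_q$ and its dual. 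The sum $\sum_{m\mid DN}c_m\,\mathbf 1_{\O^{(m)}}$ is precisely the finite-adelic Schwartz function dictated by the local Jacquet--Langlands matching data together with the Atkin--Lehner eigenvalues (the factor $a_m$ for $m\mid DN$) and by the M\"obius correction for the oldform/newform decomposition at the split levels (the factor $\mu((m,N))$).

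Second, I would invoke Shimizu's theorem: paired against a newform $f\in S_k^{D\text{-new}}(DN)$ in the first variable via the Petersson inner product over $\Gamma_0(DN)\backslash\H$, the theta integral $\int\Theta_\varphi(z;z_1,z_2)\,\overline{f(z)}\,y^{k}\,d\mu(z)$ equals $\overline{f_\B(z_1)}f_\B(z_2)$ up to an explicit constant. The compatibility normalization in Definition~\ref{definition: stable} (or rather, the ``compatibly normalized'' condition preceding the proposition) is precisely what pins this constant down to make the LHS $(y_1y_2)^{k/2}\overline{f_\B(z_1)}f_\B(z_2)$, with no further Petersson-norm factors.

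Third, I would Fourier-expand $\Theta_\varphi$ in $z=x+iy$: the $n$-th Fourier coefficient is exactly $\sum_{m\mid DN}c_m V_k(\sigma_{z_1}^{-1}\O^{(m)}\sigma_{z_2},n/m)$, since $V_k(L,t)$ gathers precisely the lattice contributions of determinant $t$ with the archimedean weight built in. Pairing this Fourier expansion against $\overline{f(z)}=\sum_n\overline{a_n}\overline{q}^n$ and unfolding the Petersson integral over $\Gamma_0(DN)\backslash\H$ (using $\int_0^\infty e^{-4\pi n y}y^{k-2}dy$ to extract coefficients) yields a sum $\sum_n \overline{a_n}\cdot(\text{$n$-th coefficient of }\Theta_\varphi)$. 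Since $f$ is a Hecke eigenform with real Fourier coefficients $a_n$, this produces the right-hand side of the stated identity.

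The genuinely hard part is step two: choosing the local Schwartz data $\varphi_q$ at each prime $q\mid DN$ so that the Shimizu pairing outputs $\overline{f_\B(z_1)}f_\B(z_2)$ (rather than some projection onto an oldform subspace), and so that the accompanying constant is identically $1$ under the compatible normalization. This requires separate analysis at the ramified primes dividing $D$ (where $\B_q$ is a division algebra and $\O^{(m)}$ degenerates) and at the split primes dividing $N$ (where the M\"obius twist $\mu((m,N))$ enters through newform theory of Atkin--Lehner). Once the local test functions are matched and Shimizu's doubling/seesaw identity is verified against the Petersson norms, the Fourier-series unfolding in step three is essentially bookkeeping.
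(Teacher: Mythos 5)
The paper gives no proof of this statement: it is imported verbatim from Nelson \cite[Theorem 3.1]{Nelson}, so there is no in-paper argument to measure your proposal against. Your outline does track the strategy of Nelson's actual proof (an explicit Shimizu theta lift with test functions supported on the lattices $\O^{(m)}$, normalized via Watson's computation and the equality of Petersson norms), so the approach is not wrong. But as a proof it has a genuine gap, and it sits exactly where you flag it: the entire content of the theorem is the determination of the local Schwartz data and the verification that the resulting constant is $1$ under the compatible normalization, i.e.\ that the coefficients come out to be $c_m=\mu((m,N))/(m\,a_m)$ and nothing else. Describing these as ``dictated by the local Jacquet--Langlands matching data'' is a statement of intent, not an argument; until the local zeta integrals at $q\mid D$ (division algebra) and $q\mid N$ (oldform/newform combinatorics) are actually computed, the formula is not established.

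There is also a technical misstep in your step three. The Petersson integral over $\Gamma_0(DN)\backslash\H$ of $\Theta_\varphi\,\overline{f}\,y^k$ does not ``unfold'' to $\int_0^\infty e^{-4\pi ny}y^{k-2}\,dy$ against Fourier coefficients: unfolding requires a Poincar\'e or Eisenstein structure that the theta kernel does not have, and the kernel is not holomorphic in $z$ (its Fourier expansion has $y$-dependent Whittaker coefficients built from $W_k$, which is why $W_k$ appears in $V_k$ at all). The way the right-hand side actually arises is by spectrally decomposing the kernel $\sum_m c_m\Theta_m$ in $S_k(DN)$, identifying its $f$-component via Shimizu, and then extracting that component through the Fourier--Whittaker coefficients using Hecke multiplicativity and the newform theory of Atkin--Lehner; this is precisely the step at which the M\"obius factor $\mu((m,N))$ and the division by $m\,a_m$ are forced. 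As written, your step three would not compile into a correct computation even granting step two.
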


Note that in the definition of compatibly normalized Jacquet-Langlands
pair, $f_\B$ is determined only up to a scalar of absolute value
$1$. Nelson's formula is independent of this scalar. We now give some
examples demonstrating how to compute values of modular functions at
CM-points using Nelson's formula.

\begin{Example} \label{example: Shimizu 14}
  Consider the case $D=14$ and $N=1$. Let
  $X=X_0^{14}(1)$. Represent $\B=\B_{14}$ by
  $\JS{-14,5}\Q$ and let $\O$ be the maximal order spanned by
  $e_1,\ldots,e_4$ given in Example \ref{example: D=14 order}.
  Let $j$ be the Hauptmodul of $X/w_{14}$ that takes values
  $0$, $\infty$, and $1$ at the CM-point of discriminant $-8$, the
  CM-point of discriminant $-4$, and the CM-point of discriminant
  $-11$ corresponding to $I-(J+IJ)/5$ in Example \ref{example: D=14
    -11}. Our goal here is to evaluate $j$ at CM-points using Nelson's
  formula.

  Let
  $$
    f(z)=\eta(z)\eta(2z)\eta(7z)\eta(14z)
  =q-q^2-2q^3+q^4+2q^6+q^7-q^8+q^9+\cdots
  $$
  be the unique newform in $S_2(14)$ and $f_\B$ be a compatibly
  normalized eigenform on $X$ corresponding to $f$. We have
  $$
    f|w_2=f, \quad f|w_7=-f.
  $$
  Hence, by Proposition \ref{proposition: Jacquet-Langlands},
  $$
    f_\B|w_2=-f_\B, \quad f_\B|w_7=f_\B.
  $$
  In other words, $f_\B$ is a modular form on $X/w_7$. Being
  a modular form of weight $2$ on $X/w_7$, $f_\B$ has a zero
  at the elliptic point of order $2$, which is the CM-point of
  discriminant $-4$, on $X/w_7$. Since a modular form of
  weight $k$ on $X/w_7$ has $k/4$ zeros, $f$ has no other zeros.
  Consequently, as a
  modular form on $X$, the divisor of $f_\B$ is
  $$
    \mathrm{div}_Xf=\frac12z_{-4}+\frac12z_{-4}',
  $$
  where $z_{-4}$ and $z_{-4}'$ are the two CM-points of discriminant
  $-4$ on $X$.

  We next let $g$ be the Hecke eigenform in $S_6(14)$ with
  $$
    g(z)=q+4q^2+8q^3+16q^4+10q^5+32q^6-49q^7+64q^8+\cdots,
  $$
  and $g_\B$ be a compatibly normalized Hecke eigenform on
  $X_0^{14}(1)$ corresponding to $g$. We have
  $g|w_2=-g$ and $g|w_7=g$, and hence
  $$
    g_\B|w_2=g, \quad g_\B|w_7=-g_\B.
  $$
  Therefore, $g_\B$ is a modular form of weight $6$ on $X/w_2$, and
  hence has zeros at elliptic points order $2$ and $4$ on $X/w_2$,
  i.e., the CM-points of discriminant $-8$ and $-4$. Thus, as a
  modular form on $X$, the divisor of $g_\B$ is
  $$
    \mathrm{div}_Xg_\B=\frac12z_{-4}+\frac12z_{-4}'+z_{-8}+z_{-8}',
  $$
  where $z_{-8}$ and $z_{-8}'$ are the CM-points of discriminant $-8$
  on $X$. It follows that
  $$
    \mathrm{div}_X\frac{g_\B}{f_\B^3}=z_{-8}+z_{-8}'-z_{-4}-z_{-4}'.
  $$
  Note that $g_\B/f_\B^3$ is invariant under $w_{14}$. As a modular
  function on $X/w_{14}$, we have
  $$
    \mathrm{div}_{X/w_{14}}\frac{g_\B}{f_\B^3}=z_{-8}-z_{-4}.
  $$
  Therefore, $j=cg_\B/f_\B^3$ for some constant $c$. To determine $c$,
  we use Nelson's formula.

  Pick a point $z'$ in the upper half-plane that is not equivalent to
  $z_{-4}$ or $z_{-8}$ on $X/w_{14}$ and let
  $$
    h(z)=\frac{\ol g_\B(z')g_\B(z)}{\ol{f_\B(z')}^3f_\B(z)^3}.
  $$
  We evaluate $h$ at $z_{-11}$ using Nelson's formula. Then using the
  relation $j(z)=h(z_{-11})^{-1}h(z)$, we may obtain values of $j$ at
  other CM-points. For instance, we choose $z'=(18\sqrt
  5+\sqrt{-74})/154$, the fixed point of $11I+18J$, expand $f$ and $g$
  for $200$ terms, sums over
  $\alpha\in\sigma_{z'}^{-1}\O^{(m)}\sigma_{z_{-11}}$ satisfying
  $|X(\alpha)|^2\le 5000$ in $V_k$, and cut off the sum in $W_k(x)$
  when the individual term becomes less thant $10^{-16}$. We get
  $$
    h(z_{-11})=-117.75021053378 + 122.65507878410i.
  $$
  Then evaluating at the CM-point $z_{-11}'$ corresponding to $2I+3J$,
  we get
  $$
    h(z_{-11})^{-1}h(z_{-11}')=-0.99999999999964 - 1.1411319536611\cdot10^{-12}i,
  $$
  which differs from the expected value $-1$ only by about $10^{-12}$.
  Also, there are two CM-points $z_{-43}$ and $z_{-43}'$ of
  discriminant $-43$ on $X_0^{14}(1)/w_{14}$ corresponding to
  $2I-3J/5+2IJ/5$ and $3I+17J/5-3IJ/5$, respectively. We find that
  \begin{equation*}
  \begin{split}
    h(z_{-11})^{-1}h(z_{-43})&=0.55555554974119 + 7.1800012303484\cdot
    10^{-10}i, \\
    h(z_{-11})^{-1}h(z_{-43}')&=-0.55555555732276 - 2.5838571396425\cdot10^{-9}i.
  \end{split}
  \end{equation*}
  Using Borcherds forms and Schofer's formula, we know that
  $j(z_{-43})^2=25/81$ (see also \cite[Table 5]{Elkies}). Therefore,
  we have $j(z_{-43})=5/9$ and $j(z_{-43}')=-5/9$.
\end{Example}



We next give a more complicated example.

\begin{Example} Consider the case $D=39$ and $N=1$. Let
  $X=X_0^{39}(1)$. According to the arXiv version of \cite{Guo-Yang}
  (arXiv:1510.06193), there is a Hauptmodul $x$ on $X/w_{39}$ that
  takes values $0$ and $\infty$ at the two CM-points of discriminant
  $-7$ and values $\pm1$ at the two CM-points of $-19$. Also, an
  equation for $X_0^{39}(1)$ is
  $$
    y^2=-(x^4-x^3-x^2+x+1)(7x^4-23x^3+5x^2+23x+7),
  $$
  on which the Atkin-Lehner involutions are given by
  $$
    w_{13}:(x,y)\longmapsto\left(-\frac1x,\frac{y}{x^4}\right), \quad
    w_{39}:(x,y)\longmapsto(x,-y).
  $$
  (The same equation also appeared in \cite{Molina-isogeny}, which in
  turn was obtained from an earlier work of Molina
  \cite{Molina-hyperelliptic} by a simple change of variables.)
  Since the covering $X\to X/w_{13}$ ramifies at CM-points of
  discriminant $-52$, we find that $x$ takes values $\pm i$ at the two
  CM-points of discriminant $-52$. Here in this example we shall
  indicate how to use Nelson's formula to evaluate the modular
  function $x$.

  Let $f$ and $g$ be Hecke eigenforms in $S_2(39)$ with
  \begin{equation*}
  \begin{split}
    f(z)&=q + q^2 - q^3 - q^4 + \cdots + q^{12} + q^{13} + \cdots, \\
    g(z)&=q+(\sqrt 2-1)q^2+q^3+\cdots +(1-2\sqrt
    2)q^{12}-q^{13}+\cdots,
  \end{split}
  \end{equation*}
  and $g'$ be the Galois conjugate of $g$. Let $f_\B$, $g_\B$, and
  $g_\B'$ be the corresponding Hecke eigenforms in $S_2(X)$. As a
  modular form on $X$, the divisor of $f_\B$ is given by
  $$
    \mathrm{div}_Xf_\B=z_{-52}^{(1)}+z_{-52}^{(2)}+z_{-52}^{(3)}+z_{-52}^{(4)}.
  $$
  Here we assume that the action of the Atkin-Lehner involution $w_3$
  on the CM-points of discriminant $-52$ is given by
  $w_3z_{-52}^{(1)}=z_{-52}^{(3)}$ and
  $w_3z_{-52}^{(2)}=z_{-52}^{(4)}$. Now there are constants $c$ and
  $c'$, not both $0$, such that $h_\B=cg_\B+c'g_\B'$ vanishes at
  $z_{-52}^{(1)}$. We claim that $h_\B$ actually has a double zero at
  $z_{-52}^{(1)}$.

  Here we first consider $h_\B$ as a modular form on $X/w_3$. Assume that
  $z$ is another zero of $h_\B$ different from $z_{-52}^{(1)}$. Then since
  $h_\B|w_{13}=-h_\B$, $w_{13}z$ is also a zero of $h_\B$. However, as a
  modular form on $X/w_3$, $h_\B$ can have only two zeros. Therefore,
  $z$ must be a fixed point of $w_{13}$, i.e., 
  $z=z_{-52}^{(2)}$. But this implies that as a modular form on $X$,
  the divisor of $h_\B$ is
  $$
    \mathrm{div}_Xh_\B=z_{-52}^{(1)}+z_{-52}^{(2)}+z_{-52}^{(3)}+z_{-52}^{(4)},
  $$
  which coincides with that of $f_\B$, and consequently, $h_\B/f_\B$
  is a constant function, but this is a contradiction as we have
  $(h_\B/f_\B)|w_3=-h_\B/f_\B$. We conclude that $h_\B$ has no other
  zeros, i.e.,
  $$
    \mathrm{div}_Xh_\B=2z_{-52}^{(1)}+2z_{-52}^{(3)},
  $$
  and we have
  $$
    \div_{X/w_{39}}\frac{h_\B}{f_\B}=z_{-52}^{(1)}-z_{-52}^{(2)}.
  $$
  If we assume that $x(z_{-52}^{(1)})=i$ and $x(z_{-52}^{(2)})=-i$, then
  $$
    \frac{h_\B}{f_\B}=c_0\frac{x-i}{x+i}
  $$
  for some constant $c_0$. In practice, we use Nelson's formula to
  determine the ratio between $c$ and $c'$ in $h_\B=cg_\B+c'g_\B'$ and
  evaluate $h_\B/f_\B$ at a particular point to determine $c_0$. We
  then can apply Nelson's formula to find values of $x$ at other
  points.
\end{Example}
\clearpage
\appendix
\section{List of Shimura curves and their quadratic forms}
\label{appendix: list of Shimura curves}
Here we list all Shimura curves of discriminant $<100$ or of genus $0$
on $\sA_2$,
characterized by their quadratic forms. Here $D$ is the discriminant
of the quaternion algebra, $k$ is the numbering of the curves, $W$ is
the stable group of the Shimura curve, as defined in Definition
\ref{definition: stable}, and $g$ is the genus of the Shimura
curve. We let $[a,b,c]$ represent the quadratic form $ax^2+bxy+cy^2$.
The quadratic forms are enumerated first in the ascending
order of $b$ and then in the ascending order of $a$. For convenience
of the reader, we  also list the smallest $p$ and $s$ satisfying the
assumptions of Lemma \ref{lemma: explicit order}.
$$ \extrarowheight3pt
\begin{array}{|c|c|c|c|c|c|} \hline\hline
D & k & Q & p,s & |W| & g \\ \hline
6 & 1 & [ 5, 2, 5 ] & 5, 2 & 4 & 0 \\
\hline
10 & 1 & [ 5, 0, 8 ] & 13, 10 & 4 & 0 \\
\hline
14 & 1 & [ 5, 4, 12 ] & 5, 4 & 2 & 0 \\
\hline
15 & 1 & [ 5, 0, 12 ] & 17, 14 & 4 & 0 \\
   & 2 & [ 8, 4, 8 ] & 17, 3 & 4 & 0 \\
\hline
21 & 1 & [ 5, 2, 17 ] & 5, 2 & 2 & 0 \\
\hline
22 & 1 & [ 8, 8, 13 ] & 13, 6 & 4 & 0 \\
\hline
26 & 1 & [ 8, 0, 13 ] & 149, 130 & 4 & 0 \\
   & 2 & [ 5, 2, 21 ] & 5, 2 & 2 & 0 \\
\hline
33 & 1 & [ 8, 4, 17 ] & 17, 16 & 2 & 0 \\
\hline
34 & 1 & [ 5, 4, 28 ] & 5, 4 & 2 & 0 \\
\hline
35 & 1 & [ 5, 0, 28 ] & 73, 68 & 4 & 0 \\
   & 2 & [ 12, 4, 12 ] & 13, 7 & 4 & 0 \\
   & 3 & [ 12, 8, 13 ] & 13, 6 & 2 & 0 \\
\hline
38 & 1 & [ 12, 4, 13 ] & 13, 12 & 2 & 0 \\
   & 2 & [ 8, 8, 21 ] & 37, 6 & 4 & 0 \\
\hline
39 & 1 & [ 5, 4, 32 ] & 5, 4 & 2 & 0 \\
   & 2 & [ 8, 4, 20 ] & 5, 1 & 2 & 0 \\
\hline
46 & 1 & [ 5, 2, 37 ] & 5, 2 & 2 & 0 \\
\hline
51 & 1 & [ 12, 0, 17 ] & 29, 24 & 4 & 0 \\
   & 2 & [ 5, 2, 41 ] & 5, 2 & 2 & 0 \\
   & 3 & [ 12, 12, 20 ] & 5, 3 & 4 & 0 \\
\hline
55 & 1 & [ 13, 2, 17 ] & 13, 2 & 2 & 0 \\
   & 2 & [ 8, 4, 28 ] & 13, 11 & 2 & 0 \\
\hline
57 & 1 & [ 8, 4, 29 ] & 29, 28 & 2 & 1 \\
\hline
58 & 1 & [ 8, 0, 29 ] & 37, 28 & 4 & 0 \\
\hline
62 & 1 & [ 12, 4, 21 ] & 29, 6 & 2 & 0 \\
   & 2 & [ 13, 10, 21 ] & 13, 10 & 2 & 0 \\
\hline\hline
\end{array}
\qquad
\begin{array}{|c|c|c|c|c|c|} \hline\hline
D & k & Q & p,s & |W| & g \\ \hline
65 & 1 & [ 5, 0, 52 ] & 97, 26 & 4 & 0 \\
   & 2 & [ 13, 0, 20 ] & 137, 14 & 4 & 0 \\
   & 3 & [ 8, 4, 33 ] & 37, 12 & 2 & 1 \\
\hline
69 & 1 & [ 5, 4, 56 ] & 5, 4 & 2 & 0 \\
   & 2 & [ 17, 16, 20 ] & 17, 4 & 2 & 0 \\
\hline
74 & 1 & [ 8, 0, 37 ] & 109, 100 & 4 & 0 \\
   & 2 & [ 5, 4, 60 ] & 5, 4 & 2 & 0 \\
   & 3 & [ 13, 8, 24 ] & 13, 6 & 2 & 0 \\
\hline
77 & 1 & [ 13, 4, 24 ] & 13, 12 & 2 & 1 \\
   & 2 & [ 17, 14, 21 ] & 17, 10 & 2 & 1 \\
\hline
82 & 1 & [ 13, 12, 28 ] & 13, 4 & 2 & 1 \\
\hline
85 & 1 & [ 5, 0, 68 ] & 73, 58 & 4 & 0 \\
   & 2 & [ 17, 0, 20 ] & 37, 26 & 4 & 0 \\
\hline
86 & 1 & [ 5, 2, 69 ] & 5, 2 & 2 & 0 \\
   & 2 & [ 12, 4, 29 ] & 29, 28 & 2 & 0 \\
   & 3 & [ 8, 8, 45 ] & 61, 10 & 4 & 0 \\
\hline
87 & 1 & [ 12, 0, 29 ] & 41, 34 & 4 & 0 \\
   & 2 & [ 8, 4, 44 ] & 17, 5 & 2 & 0 \\
   & 3 & [ 17, 6, 21 ] & 17, 12 & 2 & 0 \\
   & 4 & [ 12, 12, 32 ] & 41, 7 & 4 & 0 \\
\hline
91 & 1 & [ 13, 0, 28 ] & 41, 38 & 4 & 1 \\
   & 2 & [ 5, 2, 73 ] & 5, 2 & 2 & 2 \\
   & 3 & [ 20, 12, 20 ] & 5, 3 & 4 & 1 \\
\hline
93 & 1 & [ 17, 12, 24 ] & 17, 6 & 2 & 2 \\
\hline
94 & 1 & [ 13, 2, 29 ] & 13, 2 & 2 & 0 \\
   & 2 & [ 5, 4, 76 ] & 5, 4 & 2 & 0 \\
\hline
95 & 1 & [ 8, 4, 48 ] & 13, 9 & 2 & 0 \\
   & 2 & [ 12, 4, 32 ] & 37, 9 & 2 & 0 \\
   & 3 & [ 12, 8, 33 ] & 37, 28 & 2 & 0 \\
   & 4 & [ 13, 12, 32 ] & 13, 4 & 2 & 0 \\
\hline\hline
\end{array}
$$
$$ \extrarowheight3pt
\begin{array}{|c|c|c|c|c|c|} \hline\hline
D & k & Q & p,s & |W| & g \\ \hline
106 & 1 & [ 8, 0, 53 ] & 61, 46 & 4 & 0 \\
\hline
111 & 1 & [ 5, 2, 89 ] & 5, 2 & 2 & 0 \\
   & 2 & [ 8, 4, 56 ] & 17, 7 & 2 & 0 \\
   & 3 & [ 20, 12, 24 ] & 5, 3 & 2 & 0 \\
   & 4 & [ 17, 14, 29 ] & 17, 10 & 2 & 0 \\
\hline
115 & 1 & [ 5, 0, 92 ] & 97, 78 & 4 & 0 \\
   & 2 & [ 20, 20, 28 ] & 17, 9 & 4 & 0 \\
\hline
118 & 1 & [ 8, 8, 61 ] & 61, 30 & 4 & 0 \\
\hline
119 & 1 & [ 17, 0, 28 ] & 181, 142 & 4 & 0 \\
   & 2 & [ 5, 4, 96 ] & 5, 4 & 2 & 0 \\
   & 3 & [ 12, 4, 40 ] & 41, 21 & 2 & 0 \\
   & 4 & [ 20, 4, 24 ] & 5, 1 & 2 & 0 \\
   & 5 & [ 12, 8, 41 ] & 41, 20 & 2 & 0 \\
   & 6 & [ 24, 20, 24 ] & 181, 39 & 4 & 0 \\
\hline
122 & 1 & [ 8, 0, 61 ] & 349, 160 & 4 & 0 \\
\hline
134 & 1 & [ 5, 4, 108 ] & 5, 4 & 2 & 0 \\
   & 2 & [ 12, 4, 45 ] & 53, 32 & 2 & 0 \\
   & 3 & [ 8, 8, 69 ] & 229, 140 & 4 & 0 \\
   & 4 & [ 13, 12, 44 ] & 13, 4 & 2 & 0 \\
\hline
143 & 1 & [ 13, 0, 44 ] & 293, 60 & 4 & 0 \\
   & 2 & [ 24, 4, 24 ] & 293, 233 & 4 & 0 \\
\hline
146 & 1 & [ 5, 2, 117 ] & 5, 2 & 2 & 0 \\
   & 2 & [ 13, 2, 45 ] & 13, 2 & 2 & 0 \\
   & 3 & [ 21, 4, 28 ] & 53, 46 & 2 & 0 \\
   & 4 & [ 21, 10, 29 ] & 29, 12 & 2 & 0 \\
\hline\hline
\end{array}
\qquad
\begin{array}{|c|c|c|c|c|c|} \hline\hline
D & k & Q & p,s & |W| & g \\ \hline
159 & 1 & [ 12, 0, 53 ] & 101, 42 & 4 & 0 \\
   & 2 & [ 5, 4, 128 ] & 5, 4 & 2 & 0 \\
   & 3 & [ 8, 4, 80 ] & 41, 19 & 2 & 0 \\
   & 4 & [ 20, 4, 32 ] & 5, 1 & 2 & 0 \\
   & 5 & [ 12, 12, 56 ] & 101, 59 & 4 & 0 \\
   & 6 & [ 21, 12, 32 ] & 41, 22 & 2 & 0 \\
\hline
166 & 1 & [ 8, 8, 85 ] & 101, 84 & 4 & 0 \\
\hline
194 & 1 & [ 21, 2, 37 ] & 37, 2 & 2 & 0 \\
   & 2 & [ 5, 4, 156 ] & 5, 4 & 2 & 0 \\
   & 3 & [ 13, 4, 60 ] & 13, 12 & 2 & 0 \\
   & 4 & [ 28, 12, 29 ] & 29, 10 & 2 & 0 \\
   & 5 & [ 21, 16, 40 ] & 149, 120 & 2 & 0 \\
\hline
202 & 1 & [ 8, 0, 101 ] & 109, 82 & 4 & 0 \\
\hline
206 & 1 & [ 5, 2, 165 ] & 5, 2 & 2 & 0 \\
   & 2 & [ 12, 4, 69 ] & 109, 10 & 2 & 0 \\
   & 3 & [ 21, 8, 40 ] & 53, 50 & 2 & 0 \\
   & 4 & [ 24, 16, 37 ] & 37, 28 & 2 & 0 \\
   & 5 & [ 21, 20, 44 ] & 157, 4 & 2 & 0 \\
\hline
215 & 1 & [ 5, 0, 172 ] & 577, 436 & 4 & 0 \\
   & 2 & [ 20, 20, 48 ] & 577, 141 & 4 & 0 \\
\hline
314 & 1 & [ 8, 0, 157 ] & 229, 210 & 4 & 0 \\
\hline\hline
\end{array}
$$
\newpage

\section{Parameterizations of Shimura curves}
\label{appendix: parameterizations}
In this section, we list modular parameterizations for all Shimura
curves of genus zero on $\sA_2$. In addition, we also give
parameterizations for $\fY_N$ and $\fY_N'$ for the first few $N$,
where $\fY_N$ and $\fY_N'$ are the modular curves introduced in
Notation \ref{notation: YN}. Note that the curves $\fY_N$ lie on the
Humber surface of discriminant $1$ and hence vanish on $s_5$. The
choice of Hauptmoduls for $X_0(N)/w_N$ is given in Tabel
\ref{table: Hauptmoduls for modular curves}, while those for
$X_0^D(1)/W_D$ and $X_0^D(1)/w_D$ are given in 
\ref{table: Hauptmoduls for Shimura curves 1} and
\ref{table: Hauptmoduls for Shimura curves 2}, respectively. Note that
the description of Hauptmoduls in the case of Shimura curves is given
by specifying the values of the Hauptmodul at three certain
CM-points $z_{d_1}$, $z_{d_2}$, and $z_{d_3}$. In the case of
$X_0^D(1)/W_D$, this uniquely determines the Hauptmodul since for each
$d_i$, there is only one CM-point of discriminant $d_i$. In the case
of $X_0^D(1)/w_D$, we have two different CM-points of discriminant
$d_3$. Thus, there are two possible choices of Hauptmoduls. In the
table, we also describe how these two choices are related.

\begin{table}[!htbp] \extrarowheight2pt

\tabcaption{Choice of Hauptmoduls for $X_0^D(1)/w_D$}
\label{table: Hauptmoduls for Shimura curves 2}
\end{table}
\section{Mestre obstructions}
\label{appendix: Mestre}
In \cite{Mestre}, Mestre gave an algorithm to generate a hyperelliptic
curve of genus $2$ with given Igusa invariants
$J=[J_2,J_4,J_6,J_{10}]$. In the process, he considered a certain
ternary quadratic form
$$
  L(J)=\sum_{1\le i,j\le 3}A_{ij}(J)x_ix_j,
$$
constructed from $J$. He showed that if $L(J)$ is degenerate, then
there is always a curve $C$ of genus $2$ defined over
$\Q(J):=\Q(J_2,J_4,J_6,J_{10})$. In such a case, the curve $C$ has a
nontrivial automorphism different from the hyperelliptic involution.
(For the case of curves over $\C$, this means that the Jacobian of $C$
lies on the Humbert surface of discriminant $4$. See \cite{Bolza}.)
Furthermore, he showed that when $L(J)$ is nondegenerate, there is a
curve $C$ of genus $2$ defined over a field $K$ with Igusa invariants
$J$ if and only if $L(J)$ is isotropic over $K$. In other words, there
is a quaternion algebra $\B$ over $\Q(J)$ such that there is a curve
over $K$ with Igusa invariants $J$ if and only if $K$ splits $\B$. (In
the case when $L(J)$ is diagonal, say,
$L(J)=a_1x_1^2+a_2x_2^2+a_3x_3^2$, $\B$ is simply
$\JS{-a_1a_3,-a_2a_3}{\Q(J)}$). In literature, this quaternion
algebra $\B$ is called the \emph{Mestre obstruction} for $J$. In the
case of the unique Shimura curve $\fX_6$ of discriminant $6$, using
the parameterization given in \eqref{equation: Baba}, Baba and Granath
\cite{Baba-Granath} exhibited a matrix $M\in M(3,\Z[j])$ such that
$$
  M^t(A_{ij}(J))|_{\fX_6} M=-2^{15}3j^4(64j-81)^2
  \begin{pmatrix}1&0&0\\0&6j&0\\0&0&2(27j+16)\end{pmatrix}.
$$
Therefore, for a point on $\fX_6$ that is not on $H_4$, the Mestre
obstruction is $\JS{-6j,-2(27j+16)}{\Q(j)}$.
For the case of $D=10$, Baba and Granath \cite{Baba-Granath} found
that the Mestre obstruction is $\JS{-10j,-5(2j+25)}{\Q(j)}$. (Note
that their choice of Hauptmodul is different from ours.) In this
section, we conduct a similar computation and determine Mestre
obstructions for Shimura curves of genus $0$ and discriminant less
than $80$. The results are given in Table \ref{table: Mestre
  obstructions}.

\begin{Remark}
Note that when $X_0^D(1)/w_D$ has genus $0$ and $j$ is a Hauptmodul
for $X_0^D(1)/w_D$, a canonical model for $X_0^D(1)$ has the form
$y^2=f(x)$, where the roots of $f(x)$ are the values of $j$ at the
fixed points of the Atkin-Lehner involution $w_D$. Our computation
shows that in all cases where the Shimura curve $\fX$ is isomorphic to
$X_0^D(1)/w_D$, the Mestre obstruction for $\fX$ is given by
$$
  \JS{-D,mf(j)}{\Q(j)},
$$
where $m$ is an integer such that $r^2m$ is representable by the
quadratic form associated to $\fX$ for some rational number $r$.
For instance, the quadratic form
for $\fX_{51}^2$ is $5x^2+2xy+41y^2$, which clearly represents $5$.
Also, a canoncal model for $X_0^{51}(1)$ is $y^2=f(x)$, where
$f(x)=-(x^2+3)(243x^6+235x^4-31x^2+1)$ (see
\cite{Molina-isogeny,Guo-Yang}). Our computation shows that the Mestre
obstruction for $\fX_{51}^2$ is $\JS{-51,5f(j)}{\Q(j)}$. If this
phenomenon holds in general, there should be a deep arithmetic
meaning.

Note also that it is well-known that the Shimura curve $X_0^D(1)$ has
no real points when $D>1$ (see \cite{Ogg}). In other words, the
polynomial $f(x)$ above is negative for any real $x$. It follows that
the quaternion algebra 
$\JS{-D,mp(j)}\Q$ always ramifies at the infinite place whenever
$j\in\R$. Therefore, we have the following proposition.
\end{Remark}

\begin{Proposition} Let $\fX$ be a Shimura curve in Table \ref{table:
    Mestre obstructions} that is isomorphic to
  $X_0^D(1)/w_D$. (I.e., the quadratic form associated to $\fX$ is not
  ambiguous.) Then there are only a finite number of isomorphism
  classes of genus $2$ curves over $\R$ such that their
  Jacobians lie on $\fX$. To be more precise, these exceptional moduli
  points are the real points that lie on the intersection of $\fX$ and
  $H_4$, but not on $H_1$.
\end{Proposition}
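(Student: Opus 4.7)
The plan is to combine the empirical form of the Mestre obstruction recorded just before the proposition with Ogg's theorem that $X_0^D(1)(\R)=\emptyset$ for $D>1$. First I would fix $\fX$ in the table that is isomorphic to $X_0^D(1)/w_D$, let $j$ be the chosen Hauptmodul, and write the Mestre obstruction as $\left(\frac{-D,\,m f(j)}{\Q(j)}\right)$ where $y^2=f(x)$ is the canonical model of $X_0^D(1)$ and $m$ is a positive integer with $r^2 m$ represented by the quadratic form attached to $\fX$. This is the formula the authors observe and have verified for every entry in Appendix \ref{appendix: Mestre}, so for the proof of this proposition it is legitimate to treat the formula as an input on a case-by-case basis.

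Next I would specialize to a real moduli point $j_0\in\fX(\R)$. By Ogg's theorem the equation $y^2=f(x)$ has no real solution. Since the zeros of $f$ are precisely the fixed points of $w_D$ (all CM, hence non-real as fixed points of an Atkin--Lehner involution on $X_0^D(1)$), $f$ does not vanish on $\R$; combined with the absence of real points on $y^2=f(x)$ this forces $f(x)<0$ for every real $x$. Consequently $m f(j_0)<0$, and the specialization
\[
  \left(\frac{-D,\,m f(j_0)}{\R}\right)
\]
has both slots negative, so it is the Hamilton quaternion algebra, which does not split. By Mestre's theorem the Igusa invariants at $j_0$ are therefore \emph{not} the invariants of any curve of genus $2$ defined over $\R$ unless the ternary form $L(J)$ degenerates at $j_0$. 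That degenerate locus is exactly $\fX\cap H_4$.

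To conclude finiteness I would argue in three steps. (i) $\fX$ has dimension $1$ and $H_4$ has dimension $2$ in $\sA_2$, so $\fX\cap H_4$ is finite provided $\fX\not\subset H_4$; the latter is checked directly by observing that the non-ambiguous quadratic forms in the relevant rows of Appendix \ref{appendix: list of Shimura curves} do not (primitively) represent $4$, so $\fX$ is not contained in the Humbert surface $H_4$. (ii) A single moduli point in $\sA_2\smallsetminus H_4$ carries only finitely many isomorphism classes of genus $2$ curves over $\R$ giving that point, since the ambiguity is exhausted by quadratic twists and $\R^\times/(\R^\times)^2$ has order $2$. (iii) Points on $H_1$ parametrize products of elliptic curves with product polarization, hence are not Jacobians of smooth genus $2$ curves, so they are correctly excluded.

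The main obstacle is point (i): rigorously ruling out $\fX\subset H_4$ and, more delicately, justifying that the Mestre obstruction really has the shape $\left(\frac{-D,\,m f(j)}{\Q(j)}\right)$. A clean conceptual proof of the latter seems to require some work, but for the proposition as stated (which only concerns the finite list of curves in the table) one can simply read it off the explicit parameterizations in Appendix \ref{appendix: parameterizations}, substitute into Mestre's construction, and confirm after a finite computation. Once this is done the rest of the argument is just the sign analysis above together with the elementary finiteness statements.
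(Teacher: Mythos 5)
Your argument is correct and is essentially the paper's own: the authors derive the proposition directly from the Remark preceding it, namely that the Mestre obstruction has the computed form $\left(\frac{-D,\,mf(j)}{\Q(j)}\right)$ and that Ogg's theorem forces $f<0$ on $\R$, so the obstruction ramifies at the infinite place for every real $j$ and only the degenerate (Humbert $H_4$) locus can escape. Your additional bookkeeping (checking $\fX\not\subset H_4$, finiteness of real forms over a single moduli point, and the exclusion of $H_1$) fills in details the paper leaves implicit but does not change the route.
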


\begin{Example} \label{example: Mestre 14}
  Consider $\fX=\fX_{14}^1$. Using \eqref{equation: H4}
  and the modular parameterization in Appendix \ref{appendix:
    parameterizations}, we find that the only real points lying on
  $\fX\cap H_4$ have $j$-values $j=\infty$,$0$,$\pm1$,$5/9$,$\pm\sqrt
  5$. Among these points, the points corresponding to $j=\infty,0,1$
  also lie on $H_1$. Thus, there are precisely four genus $2$ curves
  over $\R$ whose Jacobians belong to $\fX$, two of them defined over
  $\Q$ and the other two defined over $\Q(\sqrt 5)$. The point with
  $j=-1$ is a CM-point of discriminant $-11$ with
  $$
    [s_2,s_3,s_5,s_6]=[4,35,972,4617],
  $$
  or equivalently, 
  $$
    [J_2,J_4,J_6,J_{10}]=[-76, 198, 188, -4096].
  $$
  Its ternary quadratic form represents $4x^2+4xy+4y^2$. Thus, it also
  lies on the modular curve $\fY_3'$, defined in Example \ref{example:
    YN'}. Since $\fY_3'$ parameterizes curves of genus $2$ with an
  automorphism group containing the dihedral group $D_6$ of order $12$
  (see \cite[Theorem 2.1]{Cardona}), a curve of genus $2$ with these
  invariants can be obtained using a result of Cardona and Quer
  \cite[Proposition 2.2]{Cardona-Quer-TAMS}. We find that
  $$
    y^2=11x^6+11x^3-4
  $$
  is a curve with $[s_2,s_3,s_5,s_6]=[4,35,972,4617]$.

  The point with $j=5/9$ is a CM-point of discriminant $-43$ with
  $$
    [s_2,s_3,s_5,s_6]=[400/9, 889/3, 400/27, 144001/729]
  $$
  or equivalently,
  $$
    [J_2,J_4,J_6,J_{10}]=\left[-\frac{144001}{10800},
    \frac{15552288001}{2799360000},
    \frac{46655567999}{544195584000000}, -\frac{25}{419904}\right].
  $$
  A curve of genus $2$ over $\Q$ with these invariants is
  $$
    y^2=ax^6+bx^5+cx^4+dx^3-43cx^2-43^2bx-43^3a,
  $$
  where
  \begin{equation*}
  \begin{split}
    a&=55263257981868963587850953171983151, \\
    b&=17933188094622164053062876274057062, \\
    c&=2377689006459293602182305511758203269, \\
    d&=1274547562528177370745959571323659332.
  \end{split}
  \end{equation*}
  An extra involution is given by $(x,y)\mapsto(-43/x,\sqrt{-43^3}y/x^3)$.
\end{Example}

\begin{table}[!htbp] \extrarowheight3pt
\begin{tabular}{|c|l|l|} \hline\hline
curve & $a(j)$ & $b(j)$ \\ \hline
$\fX_6^1$   & $-6j$ & $-2(27j+16)$ \\
$\fX_{10}^1$ & $-10j$ & $-5(2j+25)$ \\
$\fX_{14}^1$ & $-14$  & $-3(16j^4 - 13j^2 + 8)$ \\
$\fX_{15}^1$ & $-15j$ & $-3(j+3)(27j+1)$ \\
$\fX_{15}^2$ & $-15j$ & $-3(j+3)(27j+1)$ \\
$\fX_{21}^1$ & $-21$  & $-6(7j^4 - 94j^2 + 343)$ \\
$\fX_{22}^1$ & $-22j$ & $-11(16j+11)$ \\
$\fX_{26}^1$ & $-26j$ & $-13(2j^3 - 19j^2 + 24j + 169)$ \\
$\fX_{26}^2$ & $-26$  & $-5(2j^6 - 19j^4 + 24j^2 + 169)$ \\
$\fX_{33}^1$ & $-33$  & $-2(243j^4 + 10j^2 + 3)$ \\
$\fX_{34}^1$ & $-34$  & $-5(27j^4 + 136j^3 + 122j^2 - 136j + 27)$ \\
$\fX_{35}^1$ & $-35j$ & $-5(j+7)(7j^3 + 51j^2 + 197j + 1)$ \\
$\fX_{35}^2$ & $-35j$ & $-5(j+7)(7j^3 + 51j^2 + 197j + 1)$ \\
$\fX_{35}^3$ & $-35$  & $-3(j^2+7)(7j^6 + 51j^4 + 197j^2 + 1)$ \\
$\fX_{38}^1$ & $-38j$ & $-2(16j^3 + 59j^2 + 82j + 19)$ \\
$\fX_{39}^1$ & $-39$  & $-15(j^4 - j^3 - j^2 + j + 1)(7j^4 - 23j^3 + 5j^2 + 23j + 7)$\\
$\fX_{39}^2$ & $-39$  & $-2(j^4 - j^3 - j^2 + j + 1)(7j^4 - 23j^3 + 5j^2 + 23j + 7)$\\
$\fX_{46}^1$ & $-46$  & $-10(64j^4 - 45j^2 + 8)$ \\
$\fX_{51}^1$ & $-51j$ & $-3(j+3)(243j^3 + 235j^2 - 31j + 1)$ \\
$\fX_{51}^2$ & $-51$  & $-5(j^2+3)(243j^6 + 235j^4 - 31j^2 + 1)$ \\
$\fX_{51}^3$ & $-51j$ & $-3(j+3)(243j^3 + 235j^2 - 31j + 1)$ \\
$\fX_{55}^1$ & $-55$  & $-13(j^4-5j^3+7j^2+5j+1)(27j^4-19j^3+13j^2+19j+27)$\\
$\fX_{55}^1$ & $-55$  & $-2(j^4-5j^3+7j^2+5j+1)(27j^4-19j^3+13j^2+19j+27)$\\
$\fX_{58}^1$ & $\ 29j$  & $29(j^3 + 39j^2 + 431j + 841)$ \\
$\fX_{62}^1$ & $-62$  & $-3(64j^8 + 99j^6 + 90j^4 + 43j^2 + 8)$ \\
$\fX_{62}^2$ & $-62$  & $-6(64j^8 + 99j^6 + 90j^4 + 43j^2 + 8)$ \\
$\fX_{65}^2$ & $-65\alpha(j)$ & $-13(j^2-4j-1)(7j^4+22j^3-104j^2-46j-7)$\\
$\fX_{65}^3$ & $-65\alpha(j)$ & $-5(j^2-4j-1)(7j^4+22j^3-104j^2-46j-7)$\\
$\fX_{69}^1$ & $-69$  & $-5(243j^8-1268j^6+666j^4+2268j^2+2187)$  \\
$\fX_{69}^2$ & $-69$  & $-21(243j^8-1268j^6+666j^4+2268j^2+2187)$  \\
$\fX_{74}^1$ & $-74j$ & $-37(2j^5 - 47j^4 + 328j^3 - 946j^2 + 4158j + 1369)$\\
$\fX_{74}^2$ & $-74$  & $-15(2j^{10} - 47j^8 + 328j^6 - 946j^4 + 4158j^2 + 1369)$\\
$\fX_{74}^3$ & $-74$  & $-3(2j^{10} - 47j^8 + 328j^6 - 946j^4 + 4158j^2 + 1369)$\\
\hline\hline
\end{tabular}
\tabcaption{Mestre obstruction for $\fX$
 (Here $\alpha(j)=(j^2-4j-1)(j^2+4j-9)$)}
\label{table: Mestre obstructions}
\end{table}

\end{document}